\newtheorem{theorem}{Theorem}[section]
\newtheorem{lemma}[theorem]{Lemma}
\newtheorem{corollary}[theorem]{Corollary}
\newcommand{\Z}{\mathbb Z}
\newcommand{\A}{\mathscr A}
\newcommand{\Fc}{\mathcal F}
\newcommand{\vp}{\mathsf v}
\newcommand{\dd}{\mathsf d}
\newcommand{\ord}{\text{\rm ord}}
\newcommand{\supp}{\text{\rm supp}}
\newcommand{\la}{\langle}
\newcommand{\ra}{\rangle}
\newcommand{\be}{\begin{equation}}
\newcommand{\ee}{\end{equation}}
\newcommand{\bnml}{\begin{multline}}
\newcommand{\enml}{\end{multline}}
\newcommand{\ber}{\begin{eqnarray}}
\newcommand{\eer}{\end{eqnarray}}
\newcommand{\nn}{\nonumber}
\newcommand{\Sum}[2]{\underset{#1}{\overset{#2}{\sum}}}
\newcommand{\Summ}[1]{\underset{#1}{\sum}}
\newcommand{\und}{\;\mbox{ and }\;}
\newcommand{\superimpose}[2]{%
  {\ooalign{$#1\@firstoftwo#2$\cr\hfil$#1\@secondoftwo#2$\hfil\cr}}}
\newcommand{\bdot}{\boldsymbol{\cdot}}
\newcommand{\bulletprod}[1]{\underset{#1}{\bullet}}
\begin{document}

\title[The Large Davenport Constant II]{The Large Davenport Constant II:\\ General Upper Bounds}
\thanks{This work was supported by
the {\it Austrian Science Fund FWF} (Project No. P21576-N18)}

\author{David J. Grynkiewicz}

\address{Institut f\"ur Mathematik und Wissenschaftliches Rechnen \\
Karl-Franzens-Universit\"at Graz \\
Heinrichstra\ss e 36\\
8010 Graz, Austria} \email{diambri@hotmail.com}

\subjclass[2010]{20D60, 11B75}

\keywords{zero-sum, product-one, Davenport constant}

\begin{abstract}
Let $G$ be a finite group written multiplicatively. By a sequence over $G$, we mean a finite sequence of terms from $G$ which is unordered, repetition of terms allowed, and we say that it is a  product-one sequence if its terms can be ordered so that their product is the identity element of $G$.
The {\it small Davenport constant} $\mathsf d (G)$ is the maximal integer $\ell$ such that there is a sequence over $G$ of length $\ell$ which has  no nontrivial, product-one subsequence. The {\it large Davenport constant} $\mathsf D (G)$ is the maximal length of a minimal product-one sequence---this is a product-one sequence which cannot be partitioned  into two nontrivial, product-one subsequences.
The goal of this paper is to present several upper bounds for $\mathsf D(G)$, including the following:
$$\mathsf D(G)\leq \left\{
                          \begin{array}{ll}
                            \mathsf d(G)+2|G'|-1, & \hbox{where $G'=[G,G]\leq G$ is the commutator subgroup;} \\
                            \frac34|G|, & \hbox{if $G$ is neither cyclic nor dihedral of order $2n$ with $n$ odd;} \\
                            \frac{2}{p}|G|, & \hbox{if $G$ is noncyclic, where $p$ is the smallest prime divisor of $|G|$;} \\
                            \frac{p^2+2p-2}{p^3}|G|, & \hbox{if $G$ is a non-abelian $p$-group.}
                          \end{array}
                        \right.$$
As a main step in the proof of these bounds, we will also show that $\mathsf D(G)=2q$ when $G$ is a non-abelian group of order $|G|=pq$ with $p$ and $q$ distinct primes such that $p\mid q-1$.
\end{abstract}

\maketitle


\section{Introduction}

Let $G$ be a multiplicatively written, finite group. A sequence $S$ over $G$ means a finite sequence of terms from $G$ which is unordered, repetition of terms allowed. We say that $S$ is a product-one sequence if its terms can be ordered so that their product equals $1$, the identity element of the group. The {\it small Davenport constant} $\mathsf d (G)$ is the maximal integer $\ell$ such that there is a sequence over $G$ of length $\ell$ which has  no nontrivial, product-one subsequence. The {\it large Davenport constant} $\mathsf D (G)$ is the maximal length of a minimal product-one sequence---this is a product-one sequence which cannot be partitioned into two nontrivial, product-one subsequences. A simple argument \cite[Lemma 2.4]{DavI} shows that \be\label{basic-bound-d-D}\mathsf d (G)+1 \leq \mathsf D (G) \leq |G|,\ee with equality in the first bound when $G$ is abelian, and equality in the second when $G$ is cyclic.

The study of $\mathsf D(G)$, for $G$ abelian, is a classical and very difficult problem in Combinatorial Number Theory. When $G$ is non-abelian, there is more than one way to naturally extend the definition of the Davenport constant. This was first done by Olson and White \cite{Ol-Wh77} who introduced the small Davenport constant $\mathsf d(G)$ and gave the general upper bound $\dd(G)\leq \frac{1}{2}|G|$ (for $G$ non-cyclic) that was observed to be tight for non-cyclic groups having a cyclic, index $2$ subgroup.

This paper is a continuation of  \cite{DavI}. There, paralleling the result of Olson and White, the author along with A. Geroldinger determined the large Davenport constant $\mathsf D(G)$ for groups having a cyclic, index $2$ subgroup. Here, we parallel the result of Olson and White in a different fashion, proving several general upper bounds for $\mathsf D(G)$. In view of \eqref{basic-bound-d-D}, the bounds proved here, in many cases, also improve upon the upper bound of Olson and White for the small Davenport constant. For detailed background and motivation concerning the study of $\mathsf D(G)$, including connections with Invariant Theory, we direct the reader to the prior paper \cite{DavI}. We follow the notation  outlaid in detail in \cite{DavI} and will make frequent use of the results cited and proved there. In the interest of space, we have not repeated this information here, meaning the reader will need a copy of \cite{DavI} on hand before preceding further.

The paper is organized as follows. In Section \ref{sec-notation}, we describe the brief amount of additional notation and results needed for this paper but not found in \cite{DavI}. In Section \ref{sec-G'}, we show that the lower bound $\mathsf d(G)+1\leq \mathsf D(G)$ cannot be too far from the truth. Specifically, we prove the upper bound $\mathsf D(G)\leq \mathsf d(G)+2|G'|-1$, where $G'=[G,G]\leq G$ is the commutator subgroup, with equality holding if and only if $G$ is abelian. We will also prove a crucial technical lemma needed for later sections as well as a refinement of the bound  $\mathsf D(G)\leq \mathsf d(G)+2|G'|-1$ under additional hypotheses. In Section \ref{sec-pgroup}, we prove several upper bounds for $\mathsf D(G)$ when $G$ is a $p$--group. Chief among these, that $\mathsf D(G)\leq \frac{p^2+2p-2}{p^3}|G|$ holds for a non-abelian $p$--group $G$. In Section \ref{sec-pq-group}, we tackle the main group of difficulty in this paper---the non-abelian group of order $pq$---and determine the exact value of the large Davenport constant for such groups (the small Davenport constant of these groups was previously computed by Bass \cite{Ba07b}). The methods used in Section \ref{sec-pq-group} will then be put to further use in Section \ref{sec-near-dihedral} to determine the small Davenport constant of another problematic group: $G=\la\alpha,\,\tau:\; \alpha^q=1,\quad \tau^{4}=1,\quad \alpha\tau=\tau\alpha^r\ra$, where $q$ is an odd prime and $r^2\equiv -1\mod q$. Finally, in  Section \ref{sec-genbounds}, making full use of the previous results as well as the main result from \cite{DavI}, we prove two general upper bounds for $\mathsf D(G)$ when $G$ is non-cyclic. First, that $\mathsf D(G)\leq \frac2p |G|$, where $p$ is the smallest prime divisor of $|G|$; and second, that $\mathsf D(G)\leq \frac34|G|$, provided that $G$ is also not dihedral of order $2n$ with $n$ odd (it is known that $\mathsf D(G)=|G|$ for such groups \cite{DavI}). The latter  mirrors a similar upper bound for the Noether constant from Invariant Theory \cite{Ne07b}.


\section{Notation and Preliminaries}\label{sec-notation}

As mentioned already, we use the notational conventions described in detail in \cite{DavI} as well as the results found there. However, we have need of a small amount of additional notation and results. First, if $G$ is a group, $X\subseteq G$ is a subset, and $S\in \Fc(G)$ is a sequence over $G$, then
$$\vp_X(S)=\Summ{x\in X}\vp_x(S)$$ is the number of terms of $S$ from $X$.
Second, we need the natural extension of the subsequence sum and product notation defined in \cite{DavI}: $$\Sigma_{\leq n}(S)=\bigcup_{h\in [1,n]}\Sigma_h(S)\quad\und\quad \Pi_n(S)=\bigcup_{h\in [1,n]}\Pi_h(S),$$ where $S\in \Fc(G)$ is a sequence over a group $G$.
We also have need of the characterization of equality in the Cauchy-Davenport Theorem \cite[Theorem 4.1]{DavI} \cite[Theorem 2.2]{natboook} \cite[Theorem 6.2]{Gr13a}, which was done by Vosper \cite[Theorem 2.4]{natboook} \cite[Theorem 8.1]{Gr13a}.

\begin{theorem}[Vosper's Theorem]
Let $G\cong C_p$ with $p$ prime and let $A,\,B\subseteq G$ be nonempty subsets with $|A|,\,|B|\geq 2$. If
$$|A+B|<\min\{p-1,\,|A|+|B|\},$$ then $A$ and $B$ are arithmetic progressions of common difference.\end{theorem}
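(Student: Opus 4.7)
The plan is to treat this as the critical-pair case of the Cauchy-Davenport Theorem and to induct on $\min\{|A|,|B|\}$. By Cauchy-Davenport, $|A+B|\geq |A|+|B|-1$, so the hypothesis $|A+B|<|A|+|B|$ forces the equality $|A+B|=|A|+|B|-1$, while $|A+B|<p-1$ gives $|A+B|\leq p-2$. Applying the affine self-maps $x\mapsto \lambda x+\alpha$ of $G\cong C_p$ independently to $A$ and $B$ (which preserves both the hypothesis and the ``arithmetic progression of common difference'' conclusion), I may assume $0\in A\cap B$; by the symmetric roles of $A$ and $B$, I may further assume $|A|\leq |B|$.

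For the base case $|A|=2$, write $A=\{0,a\}$ with $a\neq 0$. Then $A+B=B\cup(a+B)$, so $|A+B|=|B|+1$ forces $|(a+B)\setminus B|=1$. Since $p$ is prime, $a$ generates $G$, so in the cyclic ordering $0,a,2a,\ldots,(p-1)a$ of $G$ the set $B$ has exactly one ``right boundary'' element (a $b\in B$ with $b+a\notin B$). Hence $B$ is a single contiguous arc in this ordering, i.e.\ an arithmetic progression with common difference $a$, and both $A$ and $B$ are then APs of common difference $a$.

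For the inductive step $|A|\geq 3$, the cleanest tool is the Dyson $e$-transform: for $e\in (A-B)\setminus\{0\}$ set $A_e=A\cup(B+e)$ and $B_e=B\cap(A-e)$, so that $A_e+B_e\subseteq A+B$ and $|A_e|+|B_e|=|A|+|B|$. A judicious choice of $e$ produces $2\leq |B_e|<|B|$; then Cauchy-Davenport, together with $A_e+B_e\subseteq A+B$, forces $|A_e+B_e|=|A_e|+|B_e|-1\leq p-2$, and the induction hypothesis gives that $A_e$ and $B_e$ are APs of common difference $d$. Combining $A\subseteq A_e$, $B_e\subseteq B$, $A+B=A_e+B_e$, and the fact that the last is an AP of common difference $d$ of length $|A|+|B|-1$, one pulls back to conclude that $A$ and $B$ are themselves APs of common difference $d$. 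The main obstacle is the careful selection of $e$: for generic $A,B$ any $e\in (A-B)\setminus\{0\}$ works, but a few degenerate configurations (where every admissible $e$ collapses $B_e$ to size $<2$) must be dispatched separately, typically via the dual transform with $e\in (B-A)\setminus\{0\}$, or by passing to the complement $C=G\setminus (A+B)$ (which satisfies $C-A\subseteq G\setminus B$ and $|C-A|=|C|+|A|-1$) and applying the inductive hypothesis to the smaller critical pair that appears there.
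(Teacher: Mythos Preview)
The paper does not prove Vosper's Theorem; it is quoted in Section~\ref{sec-notation} as a preliminary result, with citations to Nathanson's book and the author's monograph, and no argument is given. So there is nothing in the paper to compare your attempt against.

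On its own merits, your outline follows a standard route (the Dyson $e$-transform), and the base case $|A|=2$ is handled cleanly and correctly. But the inductive step, as you yourself flag, is only a plan rather than a proof. The existence of an $e$ with $2\le |B_e|<|B|$ is the heart of the matter, and you defer it to unspecified ``degenerate configurations'' that ``must be dispatched separately, typically via'' a dual transform or a complement argument---without actually carrying any of this out. Those exceptional cases are exactly where the content of Vosper's theorem lives; leaving them as a gesture makes the argument incomplete. The final ``pulling back'' step is also only asserted: from $A\subseteq A_e$ with $A_e$ an arithmetic progression of difference $d$, and $A+B$ an arithmetic progression of difference $d$ of length $|A|+|B|-1$, it is not automatic that $A$ itself is an arithmetic progression of difference $d$; one still needs an argument (for instance, showing that a translate of $A$ sits inside $A+B$ in a way that forces it to be an interval in the $d$-ordering). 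If you intend this as a proof rather than a sketch, both of these points need to be written out in full.
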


Many of our arguments rely upon the use of a subgroup $H\leq G$ acting upon the finite group $G$  by conjugation (see \cite[Chapter 1]{Robinson}). We use fairly standard notation for this. For a subset $A\subseteq G$ and $x\in G$, we let $$x^A=\{a^{-1}xa:\; a\in A\}.$$ More generally, if $A,\,B\subseteq G$, then $$A^B=\{b^{-1}ab:\;a\in A,\,b\in B\}.$$ Thus $a^H$  is the $H$-orbit of $a$ under the action of conjugation by elements from $H\leq G$, which has size \be\label{orbit-size}|a^H|=|H|/|\mathsf C_G(a)\cap H|.\ee

For a finite group $G$, we let $\eta(G)$ denote the minimal integer such that every sequence $S\in \Fc(G)$ with length $|S|\geq \eta(G)$ has a nontrivial product-one subsequence of length at most $\max\{\ord(g):\; g\in G\}$. When $G=C_n^2$ with $n\geq 2$, we have $\max\{\ord(g):\; g\in G\}=\exp(G)=n$,  and both the constants $\eta(G)$ and $\mathsf D(G)$ are  known
\cite[Theorem 5.8.3]{Ge-HK06a}: \be \label{eta-p2} \eta(C_n^2)=3n-2\quad \und\quad
\mathsf d(C_n^2)+1=\mathsf D(C_n^2)=2n-1.\ee

Finally, as noted in \cite[Section 2]{DavI}, given any sequence $S\in \Fc(G)$, we have $\pi(G)$ contained in a $G'$-coset, where $G'=[G,G]\leq G$ is the commutator subgroup. Thus $\pi(S)=Ag$ for some $A\subseteq G'$ and $g\in G$. Consequently, if we have sequences $S_1,\ldots,S_\ell\in\Fc(G)$, then, for each $i\in [1,\ell]$, we have  $$\pi(S_i)=A_ig_i\quad\mbox{ for some }\quad A_i\subseteq G'\quad\und\quad g_i\in G.$$ Furthermore, since $G'\unlhd G$ is a normal subgroup, and thus invariant under conjugation automorphisms, it follows, for each $j\in [1,\ell]$, that  $g_1\ldots g_{j-1}A_j=A'_jg_1\ldots g_{j-1} $ for some $A'_j\subseteq G'$ with $|A'_j|=|A_j|=|\pi(S_j)|$. Specifically, $A'_j=A_j^{(g_1\ldots g_{j-1})^{-1}}$. Thus $$\pi(S_1)\pi(S_2)\ldots \pi(S_\ell)=(A_1g_1)(A_2g_2)\ldots (A_\ell g_\ell)=A'_1A'_2\ldots A'_\ell g,$$ where $g=g_1\ldots g_\ell$.
In particular, if $G'\cong C_q$ with $q$ prime, then theorems estimating the cardinality of a sumset in $C_q$, such as the Cauchy-Davenport Theorem, can be applied to estimate the cardinality of the product-set $\pi(S_1)\ldots \pi(S_\ell)$. We will frequently do this without further reference to the intermediary sets $A'_i\subseteq G'$.



\section{Upper Bounds Involving $\mathsf d(G)$ and $|G'|$}\label{sec-G'}

As noted in \eqref{basic-bound-d-D}, we have $\mathsf d(G)+1\leq \mathsf D(G)$ with equality if $G$ is abelian. In this section, we show that the inequality $\mathsf d(G)+1\leq \mathsf D(G)$ cannot be far from equality. Indeed, the closer $G$ is to being abelian (as measured by the commutator $G'=[G,G]$), the closer  $\mathsf D(G)$ is bounded to $\mathsf d(G)+1$. The main result of the section is the following.

\begin{theorem} \label{lem-commutator}
Let $G$ be a finite group.
Then  \be\nn\mathsf D(G)\leq \mathsf d(G)+2|G'|-1,\ee where $G'=[G,G]\leq G$ is the commutator subgroup of $G$,
 with equality if and only if $G$ is abelian.\end{theorem}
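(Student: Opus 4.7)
\emph{The abelian case and equality.} If $G$ is abelian then $G' = \{1\}$ and the claim reduces to $\mathsf{D}(G) \leq \mathsf{d}(G) + 1$, which together with \eqref{basic-bound-d-D} yields the stated equality. This also establishes the ``if'' direction of the characterization. For the ``only if'' direction, I expect that the same line of argument used for the main inequality, but run under the assumption $G$ nonabelian (so $|G'| \geq 2$), produces the slightly strengthened bound $\mathsf{D}(G) \leq \mathsf{d}(G) + 2|G'| - 2$, i.e.\ strict inequality.

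\emph{Setup via greedy extraction.} Let $S$ be a minimal product-one sequence of length $|S| = \mathsf{D}(G)$ and suppose, for contradiction, $|S| \geq \mathsf{d}(G) + 2|G'|$. The plan is to greedily extract pairwise disjoint nontrivial product-one subsequences $T_1, \ldots, T_r \mid S$---each chosen of minimum length in the current remainder---stopping when the leftover sequence $R$ has length at most $\mathsf{d}(G)$. Then $\sum_{i=1}^r |T_i| = |S| - |R| \geq 2|G'|$, and the minimum-length choice forces each $T_i$ to be itself a minimal product-one sequence. For any subset $I \subseteq \{1, \ldots, r\}$ the concatenation $\prod_{i \in I} T_i$ is automatically product-one, since one can order each $T_i$ internally to give identity. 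Consequently, if $R \cdot \prod_{i \notin I} T_i$ were also product-one for some proper nonempty $I$, the resulting decomposition would split $S$ into two nontrivial product-one subsequences, contradicting minimality. Hence $1 \notin \pi\bigl(R \cdot \prod_{i \notin I} T_i\bigr)$ for every such $I$, and also $1 \notin \pi(R)$ (else $I = \{1, \ldots, r\}$ itself gives a split).

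\emph{Sumset contradiction inside $G'$.} Write $\pi(T_i) = A_i \ni 1$ and $\pi(R) = Bg$ with $A_i, B \subseteq G'$; since $\pi(S) \ni 1$, the $G'$-coset representative satisfies $g \in G'$. The framework recalled in Section~\ref{sec-notation} then expresses each $\pi\bigl(R \cdot \prod_{i \in J} T_i\bigr)$ as a product-set $B' \cdot \prod_{i \in J} A'_i \cdot g$ sitting inside $G'$, where the primes denote conjugates that still lie in $G'$ because $G'$ is normal. Since each $A'_i \ni 1$, this family of subsets of $G'$ is monotone in $J$; I would then argue that the total contribution $\sum |T_i| \geq 2|G'|$, combined with a Kneser-type cardinality growth estimate applied inside $G'$, forces $1$ to enter the product-set at some $J \subsetneq \{1, \ldots, r\}$, contradicting the constraint derived from minimality. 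The main obstacle is obtaining this sumset-growth estimate inside a possibly nonabelian $G'$: the clean Cauchy--Davenport and Kneser statements only apply directly in abelian groups, so one must either descend to the abelianization $G'/G''$ (apply the abelian estimate there and then lift, tracking the conjugation action of $G$) or invoke the ``crucial technical lemma'' that the author flags as being proved in the same section, which I expect to supply precisely the needed nonabelian sumset bound and simultaneously account for the sharp constant $2|G'|-1$ (rather than $|G'|$) by forcing two full traversals of $G'$ before the value $1$ is unavoidably hit.
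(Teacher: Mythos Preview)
Your abelian case is fine, but the non-abelian argument has a genuine gap at the sumset step. After your greedy extraction you know $\sum_i |T_i| \geq 2|G'|$, where $|T_i|$ is the \emph{length} of the sequence $T_i$. But any Kneser- or Cauchy--Davenport-type estimate applied inside $G'$ will be in terms of $|\pi(T_i)|$, the \emph{cardinality of the product set}, and you have established no relation between these two quantities. A minimal product-one sequence $T_i$ can be long while $|\pi(T_i)|$ is small (indeed $\pi(T_i)$ is a priori just some subset of $G'$ containing $1$), so the inequality $\sum_i |T_i| \geq 2|G'|$ by itself gives no leverage. Your appeal to the ``crucial technical lemma'' does not rescue this: that lemma (Lemma~\ref{lem-algorithm}) is an algorithmic construction producing blocks $T_i$ with $|\pi(T_i)| \geq |T_i|$ directly, not a nonabelian sumset bound that could be bolted onto your extraction. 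Also note a minor threshold slip: to get the strict inequality for nonabelian $G$ you must start from $|S| \geq \mathsf d(G) + 2|G'| - 1$, not $+2|G'|$.

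The paper's proof proceeds quite differently and avoids sumset estimates in $G'$ altogether. It fixes a product-one ordering $U^*=T^*\bdot R^*$ with a prefix $T$ satisfying $|\pi(T)|\geq \tfrac12|T|+1$, chosen of maximal length $\ell\leq 2|G'|-4$. Since $R$ is long it contains a product-one subsequence, and one reorders $R$ so that this subsequence becomes consecutive. The key trick is to pass between the two orderings of $R$ via a chain of elementary moves (cyclic shift, or swap of the first two terms---these generate the symmetric group). If the ``$\pi(T)$ absorbs the new first element(s)'' condition held at every step of the chain, one could transport the product-one ordering and factor $U$; hence it must fail somewhere, and precisely at that failure point one can append one or two terms of $R$ to $T$ while preserving $|\pi(T)|\geq\tfrac12|T|+1$, contradicting maximality of $\ell$. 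So the needed growth of $|\pi(T)|$ relative to $|T|$ is manufactured by this reordering-chain argument, not assumed or derived from a sumset inequality.
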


The proof of Theorem \ref{lem-commutator} will be given at the end of the section.
Before continuing, we make the following easy observation.

\begin{lemma}\label{lem-center}
Let $G$ be a group. If $x,\,y\in G$ are elements such that $xy\neq yx$, then $xy\notin \mathsf Z(G)$.
\end{lemma}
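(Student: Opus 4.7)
The statement is essentially immediate from the definition of the center. The plan is to argue by contradiction: assume $xy \in \mathsf{Z}(G)$, and derive that $x$ and $y$ must commute, contradicting the hypothesis $xy \neq yx$.

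More concretely, suppose $xy \in \mathsf{Z}(G)$. Since the center consists of elements commuting with every element of $G$, in particular $xy$ commutes with $x$, so
\[
x(xy) = (xy)x, \quad \text{i.e.,} \quad x^2 y = xyx.
\]
Cancelling $x$ on the left (multiplying both sides by $x^{-1}$) yields $xy = yx$, contradicting the assumption $xy \neq yx$. Therefore $xy \notin \mathsf{Z}(G)$.

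There is no real obstacle here — the lemma is a one-line consequence of the definition of $\mathsf{Z}(G)$ and left-cancellation in a group. One could equivalently use that $xy$ commutes with $y$ instead of $x$, which gives the same conclusion via right-cancellation. No deeper structural facts about $G$ are needed.
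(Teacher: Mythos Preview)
Your proof is correct and essentially identical to the paper's: both argue by contradiction, use that $xy$ commutes with $x$ (the paper phrases it via $x^{-1}$ and the commutator $xyx^{-1}y^{-1}$), and cancel to obtain $xy=yx$.
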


\begin{proof}
Assume by contradiction that $xy\in \mathsf Z(G)$. Then $xyx^{-1}y^{-1}=x^{-1}xyy^{-1}=1$, which implies $xy=yx$, contrary to hypothesis.
\end{proof}

We continue with
 an extremely important technical lemma embodying a simple algorithm at the heart of many of the proofs. We need several variations on the algorithm, which accounts for the rather weighty and technical formulation of Lemma \ref{lem-algorithm}.

\begin{lemma}\label{lem-algorithm} Let $G$ be a non-abelian, finite group, let $S^*\in \Fc^*(G)$ be an ordered sequence, let $H\leq G$ be an abelian subgroup, let $$\omega\geq 1,\quad
\omega_H\in \Z, \quad \und\quad \omega_0\in \{0\}\cup [2,|S^*|]\;\mbox{ with }\; \omega_0\leq \omega,$$ and suppose  that $|\pi(S_0)|\geq |S_0|=\omega_0$ and  $\pi(S_0)\cap \big(G\setminus \mathsf Z(G)\big)\neq \emptyset$ (if $\omega_0>0$), where $S_0=[S^*(1,\omega_0)]$, and that there are at least $\omega_H$ terms of $S_0^{[-1]}\bdot S$ from $H$.

Then there exists an ordered sequence ${S'}^*\in\Fc(G)$ with
\be\label{alg-conc-1}[{S'}^*]=[S^*]\quad\und\quad\pi({S'}^*)\in \pi(S^*)^G,\ee  having a factorization \be\label{TheTi}{S'}^*=T^*_1\bdot\ldots \bdot T^*_{r-1}\bdot T^*_r\bdot R^*,\ee where $T^*_1,\ldots,T^*_r,\,R^*\in \Fc^*(G)$ and  $r\geq 0$, such that, letting $R=[R^*]$ and $T_i=[T^*_i]$ for $i\in [1,r]$, we have  $S_0\mid T_1$ (if $\omega_0>0$), \be\label{alg-conc-2}\pi(T_i)\cap \big(G\setminus \mathsf Z(G)\big)\neq \emptyset \quad\und\quad |\pi(T_i)|\geq |T_i|\geq 2\quad\mbox{ for $i\in [1,r]$}, \quad \pi(T_i)^G=\pi(T_i)\;\mbox{ for $i\in [1,r-1]$},\ee and either
\begin{itemize}
\item[(i)] $\Sum{i=1}{r}|T_i|\leq w-1$ \ and \ $\la\supp(R)\ra<G$ \ is a proper subgroup, or
\item[(ii)] $w\leq \Sum{i=1}{r}|T_i|\leq w+1$, with the upper bound only possible if $|T_r|=2$ and $\Sum{i=1}{r-1}|T_i|=\omega-1$, and there are at least $\omega_H$ terms of $R$ from $H$, or
\item[(iii)] $\Sum{i=1}{r}|T_i|\leq w-1$ and there are precisely $\omega_H$ terms of $R$ from $H$.
\end{itemize}
\end{lemma}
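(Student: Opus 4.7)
The proof is algorithmic: I would grow the $T_i^*$'s one term at a time out of $R^*$ and verify that termination falls into exactly one of the three listed cases. Initialize $T_1^*:=S^*(1,\omega_0)$ when $\omega_0>0$ (making $S_0\mid T_1$ automatic) and $T_1^*$ empty otherwise; let $R^*$ be the complementary ordered sequence, so $T_1^*\bdot R^*=S^*$. By hypothesis, when $\omega_0>0$ the sequence $T_1=[T_1^*]$ already satisfies $|\pi(T_1)|\geq |T_1|\geq 2$ and $\pi(T_1)\cap(G\setminus \ZZ(G))\neq \emptyset$, so the invariants demanded in \eqref{alg-conc-2} hold from the outset.

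The main loop runs while $\Sum{j=1}{i}|T_j|<\omega$. It attempts to relocate one term $x\in\supp(R)$ into the current $T_i^*$, reordering and conjugating as described at the end of Section~\ref{sec-notation} so that $\pi(T_1^*\bdot\cdots\bdot T_i^*\bdot R^*)\in\pi(S^*)^G$, in such a way that $|\pi(T_i\bdot[x])|\geq |T_i|+1$ and $\pi(T_i\bdot[x])\cap(G\setminus \ZZ(G))\neq\emptyset$. A strict preference for $x\notin H$ is enforced whenever a choice is available, so that as many of the $\omega_H$ hypothesized $H$-terms as possible remain in $R$. The moment $\pi(T_i)^G=\pi(T_i)$ holds, $T_i$ is frozen (satisfying the conjugation-closure condition of \eqref{alg-conc-2}) and a fresh $T_{i+1}$ is started empty and grown to size at least $2$.

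Termination then splits into three alternatives. If the running total reaches $\omega$ through a natural closure, we are in case~(ii); the only way to overshoot to $\omega+1$ is that just before starting a freshly empty $T_r$ we had $\Sum{j=1}{r-1}|T_j|=\omega-1$, and the enforced minimum $|T_r|\geq 2$ pushes the sum to $\omega+1$, matching the stated exceptional sub-case. The $H$-preference preserves at least $\omega_H$ $H$-terms in $R$. If instead no term of $\supp(R)$ can enlarge $\pi(T_i)$ while preserving non-centrality, then no element of $\supp(R)$ conjugates $\pi(T_i)$ outside itself, and a centralizer argument forces $\la\supp(R)\ra<G$, giving case~(i). If the only usable extensions lie in $H$ and the $H$-count in $R$ has already dropped to exactly $\omega_H$, the algorithm halts rather than dip into the reserve, yielding case~(iii).

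\emph{The central technical obstacle} is the extension step. When $\pi(T_i)^G\neq \pi(T_i)$ (so $T_i$ is not yet frozen) and $\la\supp(R)\ra=G$, one must produce $x\in \supp(R)$ whose relocation strictly increases $|\pi(T_i)|$ and preserves the non-central product. I would use the observation from the end of Section~\ref{sec-notation} that $\pi(T_i)$ lies in a single $G'$-coset and that conjugation acts $G'$-equivariantly on it: non-stability produces some $c\in G$ with $c^{-1}\pi(T_i)c\not\subseteq \pi(T_i)$, and writing $c$ as a word in $\supp(R)$ forces at least one letter $x$ of that word to move $\pi(T_i)$ nontrivially, making $x$ the admissible extension. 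Lemma~\ref{lem-center} supplies a non-central product freshly from any pair of non-commuting terms when the non-centrality condition is in doubt, in particular when a new $T_{i+1}$ must be grown from scratch. Secondary bookkeeping — the $|T_r|=2$ overshoot in (ii), and the at-least versus exactly $\omega_H$ dichotomy separating (ii) from (iii) — is then straightforward once the extension step is secured.
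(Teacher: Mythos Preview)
Your algorithmic structure --- grow the live block $T_r$ by absorbing terms of $R$ that fail to normalize $\pi(T_r)$, freeze it once $\pi(T_r)^G=\pi(T_r)$, start a fresh block from a non-commuting pair in $R$ (Lemma~\ref{lem-center} giving non-centrality), and read off (i)--(iii) at termination --- is exactly the paper's approach.

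The gap is in how you ``relocate'' a chosen $x$ from inside $R^*$ to sit adjacent to $T_r^*$ while keeping $\pi({S'}^*)\in\pi(S^*)^G$. Your pointer to the end of Section~\ref{sec-notation} is misdirected: that passage rewrites products of $G'$-coset sets and says nothing about rearranging an ordered sequence. The actual device is this. Cyclic shifts conjugate the product; every $g\in G$ commutes as a set with each frozen $\pi(T_j)$ (since $\pi(T_j)^G=\pi(T_j)$ means $g\pi(T_j)=\pi(T_j)g$), so $g$ may be passed across a frozen block after reordering inside it; and for the live $T_r$ one must take the \emph{minimal} index $x$ in $R^*$ with $R^*(x)\pi(T_r)\neq\pi(T_r)R^*(x)$, so that every earlier $R^*(y)$ \emph{does} commute with $\pi(T_r)$ and can likewise be passed across all the $T_j$ and then cyclically shifted to the tail. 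This minimal-index trick is the crux and is absent from your sketch. It also undercuts your ``strict preference for $x\notin H$'': you are forced to absorb the first non-commuting term and cannot pick among candidates. The correct $H$-bookkeeping is simply that each step removes at most one $H$-term from $R$ --- one term total in the extension case, and two \emph{non-commuting} terms in the new-block case, hence at most one from the abelian $H$ --- so whenever $\vp_H(R)=\omega_H$ with the running total still at most $\omega-1$ you halt in case~(iii), no preference required.
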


\begin{proof}Let $S=[S^*]$.
First observe that $$\pi\Big(S^*(2,|S|)\bdot S^*(1)\Big)=S^*(1)^{-1}\pi(S^*)S^*(1).$$ Thus cyclically shifting the terms of $S^*$ results in an ordered sequence ${S'}^*$ with $[{S'}^*]=[S^*]=S$ and product in $\pi(S^*)^G$, as required by \eqref{alg-conc-1}.

Let ${S'}^*\in \Fc^*(G)$ be an ordered sequence with a factorization (and all notation) given by \eqref{TheTi}, satisfying all parts of the lemma apart from (possibly) conclusions (i)--(iii), with at least $\omega_H$ terms of $R$ from $H$, with $|T_r|=2$ if $\Sum{i=1}{r}|T_i|=w+1$, and subject to all this, with $\Sum{i=1}{r}|T_i|\leq w+1$ maximal. We begin by showing that such an ordered sequence ${S'}^*$ exists.

If $\omega_0=0$, then all conclusions of the lemma apart from (i)--(iii) hold taking $R^*=S^*={S'}^*$ and $r=0$; moreover, we know $S^*=R^*$ contains at least $\omega_H$ terms from $H$ by hypothesis, and clearly $\Sum{i=1}{r}|T_i|=0<\omega$. Thus the ${S'}^*$ described above exists in the case $\omega_0=0$. On the other hand, if $\omega_0\geq 2$ (note $\omega_0=1$ is not allowed by our hypotheses),
then all conclusions of the lemma apart from (i)--(iii) hold taking $S^*={S'}^*$, \ $R^*=S^*(\omega_0+1,|S^*|)$, \ $r=1$ and $T^*_1=S^*(1,\omega_0)$ (as follows from the hypotheses); moreover, we know that $|T_1|=\omega_0\leq \omega<\omega+1$ and that $R^*=S^*(\omega_0+1,|S^*|)$ contains at least $\omega_H$ terms from $H$ by hypothesis. Thus the ${S'}^*$ described above exists in the case $\omega_0\geq 2$ as well.

If $\Sum{i=1}{r}|T_i|\geq \omega$, then (ii) holds and the proof is complete. Therefore we can assume \be\label{sheep1}\Sum{i=1}{r}|T_i|\leq  \omega-1.\ee Hence, if there are precisely $\omega_H$ terms of $R$ from $H$, then (iii) holds and the proof is again complete. Therefore, since there are assumed to be at least $\omega_H$ terms of $R$ from $H$, it follows that this estimate must be strict: \be\vp_H(R)\geq \omega_H+1\label{sheep2}.\ee If $\la\supp(R)\ra<G$ is a proper subgroup, then  (i) holds, completing the proof once more. Therefore we can assume \be\label{sheep3}\la \supp(R)\ra=G.\ee We now aim to show that \eqref{sheep1}--\eqref{sheep3} allow us to contradict the maximality of $\Sum{i=1}{r}|T_i|$ for ${S'}^*$. We proceed in two cases.

\subsection*{Case 1:} $r\geq 1$ and $\pi(T_r)^G\neq \pi(T_r)$.

If $\pi(T_r)^{\supp(R)}=\pi(T_r)$, then it is easily shown that $\pi(T_r)^{\la\supp(R)\ra}=\pi(T_r)$. But since \eqref{sheep3} gives $\la \supp(R)\ra=G$, this would mean  $\pi(T_r)^G=\pi(T_r)^{\la\supp(R)\ra}=\pi(T_r)$, contrary to case hypothesis.
Therefore there must be some $g\in \supp(R)$ such that $g\pi(T_r)\neq \pi(T_r)g$. Let $x\in [1,|R|]$ be minimal such that $R^*(x)\pi(T_r)\neq \pi(T_r)R^*(x)$.

By the minimality of $x$, we have $R^*(y)\pi(T_r)=\pi(T_r)R^*(y)$ for every $y\in [1,x-1]$.
Since $\pi(T_j)^G=\pi(T_j)$ for $j\in [1,r-1]$ (in view of   \eqref{TheTi} holding for ${S'}^*$), we also have $R^*(y)\pi(T_j)=\pi(T_j)R^*(y)$ for every $y\in [1,x-1]$ and $j\in [1,r-1]$.
Thus, for each $j\in [1,r]$, there exists ordering ${T'}^*_j$ of $T_j$ such that $$\pi\Big(R^*(1,x-1)\Big)\pi({T'}^*_j)=\pi(T^*_j)\pi\Big(R^*(1,x-1)\Big).$$
Hence $$\pi\Big(R^*(1,x-1)\Big)\pi({T'}^*_1)\ldots\pi({T'}^*_r)=\pi(T^*_1)\ldots
\pi(T^*_r)\pi\Big(R^*(1,x-1)\Big).$$ In other words, allowing re-ordering of the terms of the $T_i$, we can commute the terms from $R^*(1,x-1)$ past the $T_i$ while preserving that the resulting ordered sequence still has the same product. Then, as mentioned at the beginning of the proof, we can cyclically shift the terms $R^*(1,x-1)$ until the sequence ${T'}^*_1$ is once again the start of the resulting sequence $${S''}^*:={T'}^*_1\bdot \ldots \bdot {T'}^*_r \bdot R^*(x,|R|)\bdot R^*(1,x-1),$$ and this will preserve that $\pi({S''}^*)\in \pi({S'}^*)^G=\pi(S^*)^G$ (note that the hypothesis  $\pi({S'}^*)\in \pi(S^*)^G$ is equivalent to $\pi({S'}^*)^G=\pi({S}^*)^G$).
Moreover, this does not affect any of the defining properties of the $T_i$, which means that (by replacing ${S'}^*$ by ${S''}^*$, the $T^*_i$ by the ${T'}^*_i$, and $R^*$ by $R^*(x,|R|)\bdot R^*(1,x-1)$), we can w.l.o.g. assume $x=1$.

Observe that $$\pi(T_r)R^*(1)\cup R^*(1)\pi(T_r)\subseteq \pi\Big(T_r\bdot R^*(1)\Big).$$ Moreover, we have $\pi(T_r)R^*(1)\neq R^*(1)\pi(T_r)$ in view of $x=1$ and the definition of $x$. Thus $$|\pi\Big(T_r\bdot R^*(1)\Big)|\geq |\pi(T_r)|+1.$$ If $gR^*(1)=R^*(1)g$ for every $g\in \pi(T_r)$, then $R^*(1)\pi(T_r)=\pi(T_r)R^*(1)$ would follow, contrary to the definition of $x=1$. Therefore there must be some $g\in \pi(T_r)$ with $R^*(1)g\neq gR^*(1)$, in which case  Lemma \ref{lem-center} ensures that the element $gR^*(1)\in \pi\Big(T_r\bdot R^*(1)\Big)$ is from $G\setminus \mathsf Z(G)$. In view of \eqref{sheep2}, we see that ${R'}^*:=R^*(2,|R|)$ contains at least $\omega_H$ terms from $H$, and \eqref{sheep1} ensures that $\Sum{i=1}{r}|T'_i|\leq \omega<\omega+1$, where ${T'_i}^*:={T_i}^*$ for $i\in [1,r-1]$ and ${T'_r}^*:={T_r}^*\bdot R^*(1)$. But now the maximality of $\Sum{i=1}{r}|T_i|$ for ${S'}^*$ is contradicted by the factorization ${S'}^*={T'}^*_1\bdot\ldots\bdot {T'}^*_r\bdot {R'}^*$, completing Case 1.

\subsection*{Case 2:} $r=0$ or $\pi(T_r)^G= \pi(T_r)$.

If $gh=hg$ for all $g,\,h\in \supp(R)$, then $\la \supp(R)\ra$ must be abelian. Hence, since $G$ is non-abelian by hypothesis, $\la \supp(R)\ra$ is  a proper subgroup of $G$, contrary to \eqref{sheep3}. Therefore, there must be $g_0,\,h_0\in \supp(R)$ with $g_0h_0\neq h_0g_0$.
Swapping the order of adjacent terms of $R^*$ that commute with each other  preserves all assumptions from the definition of ${S'}^*$.
Consequently, performing such swaps, we can either  arrange that $R^*$  has the non-commuting terms $g_0$ and $h_0$ adjacent to each other or else has $g_0$ adjacent to another term $h'_0$ that also does not commute with $g_0$. Either way, we may assume there are consecutive terms in $R^*$ that do not commute, say $R^*(x)R^*(x+1)\neq R^*(x+1)R^*(x)$ with $x\in [1,|R|-1]$.

By \eqref{TheTi} and case hypothesis, we have $\pi(T_j)g=g\pi(T_j)$ for all $g\in G$ and $j\in [1,r]$. Thus, as we argued in Case 1, we can commute the terms $R^*(1,x-1)$ past the $T^*_i$, re-ordering each $T_i$ appropriately, and then cyclically shift the  terms $R^*(1,x-1)$ to thereby w.l.o.g. assume $x=1$.

Let $T^*_{r+1}:=R^*(1,2)$, \ $T_{r+1}=[T^*_{r+1}]$ and ${R'}^*:=R^*(3,|R|)$. Since $R^*(1)R^*(2)\neq R^*(2)R^*(1)$ (in view of the definition of $x=1$), we have $|\pi(T_{r+1})|\geq 2=|T_{r+1}|$ while Lemma \ref{lem-center} ensures that $\pi(T_{r+1})\cap (G\setminus \mathsf Z(G))\neq \emptyset$. In view of the case hypothesis, we have $\pi(T_r)^G= \pi(T_r)$, while $\pi(T_j)^G= \pi(T_j)$ holds for $j\in [1,r-1]$ from the hypotheses in the definition of ${S'}^*$.
Since $H$ is abelian and the terms $R^*(1)$ and $R^*(2)$ do not commute with each other, it follows that at most one term from $R^*(1,2)$ is from $H$, whence \eqref{sheep2} ensures that ${R'}^*$ contains at least $\omega_H$ terms from $H$. By its definition, we have $|T_{r+1}|=2$, and \eqref{sheep1} gives $\Sum{i=1}{r+1}|T_i|\leq \omega-1+2=\omega+1$. But now the maximality of $\Sum{i=1}{r}|T_i|$ for ${S'}^*$ is contradicted by the factorization ${S'}^*={T}^*_1\bdot\ldots\bdot {T}^*_r\bdot T^*_{r+1}\bdot {R'}^*$, completing Case 2 and the proof.
\end{proof}

Next, we give a simple application of Lemma \ref{lem-algorithm}. Note that the $p$ defined in Corollary \ref{lem-commutator-I} is always at least as big as the smallest prime divisor of $|G|$.

\begin{corollary} \label{lem-commutator-I}
Let $G$ be a finite, non-abelian group, let $G'=[G,G]\leq G$ be its commutator subgroup, and let
$$p=\min\{|G|/|\mathsf C_G(x)|:\; x\in G\setminus \mathsf Z(G)\}.$$
Suppose $G'$ is cyclic of prime order. Then \ber\nn\mathsf D(G)&\leq& \max\left(\left\{\mathsf d(G)+|G'|+\left\lfloor\frac{|G'|-2}{p-1}\right\rfloor\right\}\cup\left\{\mathsf D(H)+|G'|+\left\lfloor\frac{|G'|-2}{p-1}\right\rfloor-2:\; H<G\mbox{ proper}\right\}\right).\eer

In particular, if we also know that all proper subgroups $H<G$ are abelian, then $$\mathsf D(G)\leq \mathsf d(G)+|G'|+\left\lfloor\frac{|G'|-2}{p-1}\right\rfloor.$$
\end{corollary}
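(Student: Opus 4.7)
Let $S$ be a minimal product-one sequence with $|S|=\mathsf D(G)$, fix $S^*$ with $\pi(S^*)=1$, and set $q=|G'|$ and $\omega=q+\lfloor(q-2)/(p-1)\rfloor$. Apply Lemma~\ref{lem-algorithm} with $\omega_0=0$, $H=\mathsf Z(G)$, and $\omega_H=0$, so that all hypotheses are vacuously met. The algorithm produces a reordering ${S'}^*=T_1^*\bdot\cdots\bdot T_r^*\bdot R^*$ of $S^*$ with $[{S'}^*]=S$ and $\pi({S'}^*)\in\pi(S^*)^G=\{1\}$, hence $\pi(T_1^*)\cdots\pi(T_r^*)\pi(R^*)=1$. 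The blocks satisfy $|\pi(T_i)|\geq|T_i|\geq 2$; and for $i<r$, the conjugation-invariance $\pi(T_i)^G=\pi(T_i)$ together with the fact that $\pi(T_i)$ lies in a single $G'$-coset and meets $G\setminus\mathsf Z(G)$ forces $\pi(T_i)$ to be a union of non-central conjugacy classes, whence $|\pi(T_i)|\geq p$.

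\textbf{Splitting mechanism.} The key point is to show that whenever $R$ contains a nontrivial product-one subsequence $W$, one can partition $S$ into two nontrivial product-one subsequences, contradicting minimality. Translating via the end-of-Section~\ref{sec-notation} identification, the product-set $\pi(T_1)\cdots\pi(T_r)\pi(R)$ corresponds to a sumset $A_1+\cdots+A_r+B+g$ in $G'\cong C_q$, with $|A_i|\geq p$ for $i<r$ and $|A_r|\geq 2$, and we know $-g\in A_1+\cdots+A_r+B$. Using $\pi(W)\ni 1$ (so $0\in\pi(W)\subseteq C_q$), the sub-sumset $A_1+\cdots+A_r+C$ (where $C\subseteq B$ corresponds to $\pi(R\setminus W)$) can be forced to still contain $-g$ via iterated Cauchy--Davenport $|A_1+\cdots+A_r+C|\geq\min\{q,(p-1)(r-1)+|C|+1\}$, whence $\pi(T_1\bdot\cdots\bdot T_r\bdot(R\setminus W))\ni 1$; the normality $\pi(T_i)^G=\pi(T_i)$ for $i<r$ then lets us rearrange orderings so that the partition $S=W\bdot(T_1\bdot\cdots\bdot T_r\bdot(R\setminus W))$ realises both pieces as product-one. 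Consequently, we may assume $R$ has no nontrivial product-one subsequence in $G$, which yields $|R|\leq\mathsf d(H'')$ for $H''=\langle\supp(R)\rangle$.

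\textbf{Case analysis.} We now combine this with the algorithm's trichotomy. In Case~(i), $H''<G$ is proper, so $|R|\leq\mathsf d(H'')\leq\mathsf D(H'')-1$ by \eqref{basic-bound-d-D}, giving $|S|\leq(\omega-1)+(\mathsf D(H'')-1)=\mathsf D(H'')+\omega-2$. In Case~(iii), $\sum|T_i|\leq\omega-1$ and $|R|\leq\mathsf d(G)$ give $|S|\leq\mathsf d(G)+\omega-1<\mathsf d(G)+\omega$. In Case~(ii) with $\sum|T_i|=\omega$, $|S|\leq\mathsf d(G)+\omega$. In Case~(ii) with $\sum|T_i|=\omega+1$, the algorithm forces $|T_r|=2$ with non-commuting terms $x,y$ and $\sum_{i<r}|T_i|=\omega-1$; I would then show $|R|\leq\mathsf d(G)-1$ by applying the splitting mechanism to the augmented sequence $R\bdot x$ instead of $R$: any nontrivial product-one subsequence of $R\bdot x$ must contain $x$ (since $R$ itself has none), and converting it back to a partition of $S$ exploits the non-commutativity $xy\neq yx$ to supply the extra element required in the sumset argument. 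Combining the four outcomes yields $\mathsf D(G)\leq\max\bigl(\{\mathsf d(G)+\omega\}\cup\{\mathsf D(H)+\omega-2:H<G\text{ proper}\}\bigr)$.

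\textbf{The in-particular statement, and the main obstacle.} When every proper $H<G$ is abelian, $\mathsf D(H)=\mathsf d(H)+1$ by \eqref{basic-bound-d-D} and $\mathsf d(H)\leq\mathsf d(G)$ (since a product-one-free sequence over $H$ remains one over $G$), so $\mathsf D(H)+\omega-2\leq\mathsf d(G)+\omega-1<\mathsf d(G)+\omega$ and the maximum collapses to $\mathsf d(G)+\omega$, as claimed. The hardest step is establishing the splitting mechanism itself: given a product-one $W\subseteq R$ with $|\pi(W)|>1$, converting the sumset statement $A_1+\cdots+A_r+B\ni{-g}$ into $A_1+\cdots+A_r+C\ni{-g}$ is not automatic and requires delicate use of the normality $\pi(T_i)^G=\pi(T_i)$ for $i<r$ and the Cauchy--Davenport estimate in the cyclic prime group $G'\cong C_q$; the sub-case $\sum|T_i|=\omega+1$ in Case~(ii) adds a further complication because the replacement argument requires adjoining an element of $T_r$ to $R$ and propagating the non-commutativity through the sumset.
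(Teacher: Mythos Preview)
Your overall architecture is close to the paper's, but the ``splitting mechanism'' as you formulate it has a genuine gap in Case~(i) (and in your Case~(iii)). You want to conclude that if $R$ contains a nontrivial product-one subsequence $W$, then $1\in\pi(S\setminus W)$, and you propose to get this from the Cauchy--Davenport estimate
\[
|A_1+\cdots+A_r+C|\;\ge\;\min\{q,\;(p-1)(r-1)+|C|+1\}.
\]
But this only yields $1\in\pi(S\setminus W)$ when the right-hand side equals $q$, i.e.\ when $(p-1)(r-1)+|C|+1\ge q$. In Case~(i) of Lemma~\ref{lem-algorithm} (and in your Case~(iii)) you have merely $\sum_i|T_i|\le\omega-1$ with no lower bound on $r$ at all; one can easily have $r=1$, $|T_1|=2$, and $|C|=1$, so the sumset has size $\ge 2$, nowhere near $q$. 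Thus you cannot deduce that the specific element $-g$ lies in $A_1+\cdots+A_r+C$, and the split does not follow. Your bound $|R|\le\mathsf d(H'')$ in Case~(i) is therefore unjustified.

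The paper handles Case~(i) by an entirely different mechanism that does not use Cauchy--Davenport: since the contradiction hypothesis gives $|R|\ge\mathsf D(H)$ for $H=\langle\supp(R)\rangle$, one applies \cite[Lemma~2.5]{DavI}, which produces a nontrivial product-one $T\mid R$ with the crucial extra property $\pi(R^*)\in\pi(T^{[-1]}\bdot R)$. This immediately gives $1=\pi(T_1^*)\cdots\pi(T_r^*)\pi(R^*)\in\pi(T^{[-1]}\bdot S)$, so $S=T\bdot(T^{[-1]}\bdot S)$ is the desired factorization. No coset-filling is needed. The paper also sets $\omega_H=-1$ (with $H$ trivial) so that outcome~(iii) of Lemma~\ref{lem-algorithm} is vacuous, eliminating your Case~(iii). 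In Case~(ii), the paper argues by contradiction that $\pi(T_1\bdot\cdots\bdot T_r)$ (or $T_1\bdot\cdots\bdot T_{r-1}$) must already be a full $G'$-coset, using both $|\pi(T_i)|\ge|T_i|$ and $|\pi(T_i)|\ge p$ for $i<r$ together with the precise value of $\omega$; your sketch uses only the second of these two estimates, which by itself does not close the argument for all $r$.
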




\begin{proof}
Since $G$ is non-abelian, $G'\leq G$ is nontrivial and  $\mathsf Z(G)<G$ is proper.
Note that the ``in particular'' statement of the corollary follows from the main part in view of the inequality $\mathsf D(H)=\dd(H)+1\leq \dd(G)+1$ holding for any abelian subgroup $H\leq G$ (care of \cite[Lemma 2.4.3]{DavI}). In view of \eqref{orbit-size}, we see that $p\geq 2$ is the minimal size of an orbit of an element $g\in G\setminus \mathsf Z(G)$. Assume by contradiction that we have an atom $S\in \mathcal A(G)$ with \be\label{S-big}|S|\geq \max\left(\left\{\mathsf d(G)+|G'|+\left\lfloor\frac{|G'|-2}{p-1}\right\rfloor+1\right\}\cup\left\{\mathsf D(H)+|G'|+\left\lfloor\frac{|G'|-2}{p-1}\right\rfloor-1:\; H<G\mbox{ proper}\right\}\right).\ee
Since $S\in \mathcal A(G)$, there is an ordering $S^*\in \Fc^*(G)$ with $[S^*]=S$ and $\pi(S^*)=1$.

Apply Lemma \ref{lem-algorithm} to $S^*$ taking $H$ trivial, $\omega=|G'|+\left\lfloor\frac{|G'|-2}{p-1}\right\rfloor$, \ $\omega_H=-1$, and $\omega_0=0$ and let $${S'}^*=T^*_1\bdot \ldots\bdot T^*_r\bdot R^*$$ be the resulting factorization, where $T^*_1,\ldots,T^*_r,\,R^*\in \Fc^*(G)$, \  $[R^*]=R$ and $[T^*_i]=T_i$ for $i\in [1,r]$.
Observe that \be\label{omega-big} \omega\geq \frac{p|G'|-p}{p-1}.\ee
 Since $\omega_H$ is negative, Lemma \ref{lem-algorithm}(iii) cannot hold.
This gives us two cases.

\subsection*{Case 1:} Lemma \ref{lem-algorithm}(i) holds.

Since $\pi({S'}^*)\in \pi(S^*)^G=1^G=\{1\}$ (from \eqref{alg-conc-1}), we see that ${S'}^*$ is a product-one ordered sequence. In view of Lemma \ref{lem-algorithm}(i), we have $\la \supp(R)\ra:=H<G$ being a proper subgroup. In view of  Lemma \ref{lem-algorithm}(i) and \eqref{S-big}, we also know $$|R|\geq |S|-w+1\geq \left(\mathsf D(H)+|G'|+\left\lfloor\frac{|G'|-2}{p-1}\right\rfloor-1\right)-
|G'|-\left\lfloor\frac{|G'|-2}{p-1}\right\rfloor+1= \mathsf D(H).$$
Thus we can apply \cite[Lemma 2.5]{DavI} to $R$ to find a nontrivial, product-one subsequence $T\mid R$ with $|T|\leq \mathsf D(H)<|S|$ and $\pi(R^*)\in\pi (T^{[-1]}\bdot R)$. Hence $$1=\pi({S'}^*)=\pi(T^*_1)\ldots \pi(T^*_r)\pi(R^*)\in \pi\Big(T_1\bdot\ldots\bdot T_r\bdot (T^{[-1]}\bdot R)\Big)=\pi(T^{[-1]}\bdot S),$$ which means $S=T\bdot (T^{[-1]}\bdot S)$ is a nontrivial factorization, contradicting that $S\in \mathcal A(G)$ is an atom.

\subsection*{Case 2:} Lemma \ref{lem-algorithm}(ii) holds.

Since $\pi(T_i)\cap \Big( G\setminus \mathsf Z(G)\Big)\neq \emptyset$ and $\pi(T_i)^G=\pi(T_i)$ for $i\in [1,r-1]$ (from \eqref{alg-conc-2}), it follows, in view of  the description of $p$ given at the beginning of the proof, that \be\label{maddhat}|\pi(T_r)|\geq 2\quad\und\quad |\pi(T_i)|\geq p\quad\mbox{ for all $i\in [1,r-1]$},\ee  where the first inequality follows directly from \eqref{alg-conc-2}.

\medskip

Suppose we can find a subsequence $T\mid S$ such that $\pi(T)$ is a full $G'$-coset and $|T|\leq \omega$. Then, in view of \eqref{S-big}, we have
$$|T^{[-1]}\bdot S|\geq |S|-\omega\geq \mathsf d(G)+1.$$ As a result, the definition of $\mathsf d(G)$  guarantees that there is a nontrivial, product-one subsequence $V_1\mid T^{[-1]}\bdot S$. Thus $S=V_1\bdot V_2$ with $T\mid V_2$, where $V_2=V_1^{[-1]}\bdot S$. Note that $V_2={V_1}^{[-1]}\bdot S$ is nontrivial since it contains the subsequence $T$ which must be nontrivial in view of $\pi(T)$ being a full $G'$-coset with $G'$ nontrivial. By
\cite[Lemma 2.2]{DavI}, we have \be\label{redred}\pi(V_2)\subseteq G'.\ee Since $T\mid V_2$, and since $\pi(T)$ is a full $G'$-coset, it likewise follows that $\pi(V_2)$ is also a full $G'$-coset, meaning the inclusion in \eqref{redred} is an equality: $1\in G'=\pi(V_2)$. Consequently, $S=V_1\bdot V_2$ is   a factorization of $S$ into two nontrivial, product-one subsequences, contradicting that $S\in \mathcal A(G)$ is an atom. So we instead assume that \be \label{no-good-T}\mbox{there does not exist a subsequence }\;T\mid S\;\mbox{ with }\quad |\pi(T)|=|G'|\quad\und\quad |T|\leq \omega.\ee

\medskip


Let $W=T_1\bdot\ldots\bdot T_{r-1}$. Then $$|W|= \Sum{i=1}{r}|T_i|-|T_r|\leq \omega+1-2=\omega-1,$$ with the inequality above following from those given in Lemma \ref{lem-algorithm}(ii) and \eqref{alg-conc-2}. Thus \eqref{no-good-T} ensures that \be\label{wisk}|\pi(W)|\leq |G'|-1.\ee
Observe that  \be\label{yaith}\pi(T_1)\ldots\pi(T_{r-1})\subseteq \pi(T_1\bdot\ldots\bdot T_{r-1})
= \pi(W).\ee
Thus, since $G'$ is cyclic of prime order by hypothesis, using \eqref{wisk}, \eqref{yaith}, the Cauchy-Davenport Theorem,  and \eqref{maddhat}, we obtain  \be|G'|-1\geq |\pi(W)|\geq \Sum{i=1}{r-1}|\pi(T_i)|-r+2\geq (r-1)p-r+2.\label{notmadhat}\ee  Rearranging this inequality gives
\be r\leq \frac{|G'|+p-3}{p-1}.\label{nextplease}\ee
In view of  Lemma \ref{lem-algorithm}(ii) holding by case hypothesis, we have $\omega\leq \Sum{i=1}{r}|T_i|\leq \omega+1$.

\medskip

Suppose $\Sum{i=1}{r}|T_i|= \omega+1$. In this case, Lemma \ref{lem-algorithm}(ii) further tells us that $|T_r|=2$ and  $\Sum{i=1}{r-1}|T_i|=\omega-1$; and from \eqref{alg-conc-2}, we have $|\pi(T_i)|\geq |T_i|$ for all $i$. Thus \eqref{notmadhat} and \eqref{nextplease} yield  \be\nn|G'|-1\geq \Sum{i=1}{r-1}|\pi(T_i)|-r+2\geq \Sum{i=1}{r-1}|T_i|-r+2=\omega+1-r\geq \omega+1-\frac{|G'|+p-3}{p-1}.\ee Consequently, $\omega\leq \frac{p|G'|-p-1}{p-1}$, contradicting \eqref{omega-big}. So we instead conclude that \be\label{batflys} \Sum{i=1}{r}|T_i|= \omega.\ee

In view of \eqref{batflys}, we have $|T_1\bdot\ldots\bdot T_r|\leq \omega$. But now we obtain a string of inequalities as follows: the first inequality follows from  \eqref{no-good-T}, the second is clear, the third from an application of the Cauchy-Davenport Theorem as argued for \eqref{notmadhat}, the fourth in view of \eqref{alg-conc-2},  the equality from \eqref{batflys}, and the final inequality from \eqref{nextplease}. \ber\nn |G'|-1&\geq& |\pi(T_1\bdot \ldots\bdot T_r)|\geq |\prod_{i=1}^{r}\pi(T_i)|\geq \Sum{i=1}{r}|\pi(T_i)|-r+1\\&\geq& \Sum{i=1}{r}|T_i|-r+1=\omega+1-r\geq \omega+1-\frac{|G'|+p-3}{p-1}\nn\eer Rearranging the above inequality gives $\omega\leq \frac{p|G'|-p-1}{p-1},$ contrary to \eqref{omega-big}, which completes the proof.\end{proof}

We conclude the section with the proof of Theorem \ref{lem-commutator}.

\begin{proof}[Proof of Theorem \ref{lem-commutator}] We have $\mathsf D(G)=\mathsf d(G)+1$ for any abelian group $G$ (care of \cite[Lemma 2.4.3]{DavI}). Thus it suffices to show \be\nn\mathsf D(G)\leq \mathsf d(G)+2|G'|-2\ee for a finite, non-abelian group $G$. Since $G$ is non-abelian, $G'$ is nontrivial.

Let $U\in \mathcal A(G)$ be an atom with $|U|=\mathsf D(G)$.
As in the proof of \eqref{no-good-T} in Corollary   \ref{lem-commutator-I},
 may  assume \be\label{NoexistX}\mbox{ there is no subsequence \ $T\mid U$  \ with  \ $|T^{[-1]}\bdot U|\geq \mathsf d(G)+1$ \ and  \ $\pi(T)$ \ a full \ $G'$-coset}\ee and, by  way of contradiction, that
 \be\label{Ulonger} |U|\geq \mathsf d(G)+2|G'|-1.\ee

\bigskip


Let $\ell\in [2,2|G'|-2]$ be the maximal integer such that there exists an ordered sequence $U^*\in \mathcal F^*(G)$ with \be\label{cub0}[U^*]=U\quad\und\quad \pi(U^*)=1\ee having a factorization $U^*=T^*\bdot R^*$, where  $R^*,\,T^*\in \Fc^*(G)$, \ $R:=[R^*]$ and $T:=[T^*]$, such that
\ber\label{cubi}&&|T|=\ell
\quad\und\quad |\pi(T)|\geq \frac12 |T|+1.\eer
To see that $\ell\geq 2$ exists, we argue as follows.  If $\la\supp(U)\ra:=H$ were abelian, then $H<G$ follows since   $G$ is non-abelian, and then Lemma \cite[Lemma 2.4.3]{DavI} gives $|U|\leq \mathsf D(H)=\mathsf d(H)+1\leq \mathsf d(G)+1$, contrary to \eqref{Ulonger}. Therefore we can assume there are terms $g_0,\,h_0\in \supp(U)$ that do not commute: $g_0h_0\neq h_0g_0$. But now, arguing as from the beginning of Case 2 in Lemma \ref{lem-algorithm} allows us to w.l.o.g. assume the first two terms of $U^*$ do not commute, in which case it is clear that $\ell\geq 2$ exists.
Also, if $\ell$ were odd, then taking $T^*\bdot R^*(1)$ in place of $T^*$ would contradict the maximality of $\ell$, which means that $\ell$ must be even. Finally, if $\ell=2|G'|-2$, then the sequence $T$ will contradict \eqref{NoexistX} in view of \eqref{Ulonger}. Thus, since $\ell$ is even, we must have \be\label{max-contradict}2\leq \ell\leq 2|G'|-4.\ee

In view of $\ell\leq 2|G'|-4$ and \eqref{Ulonger}, we have $|R|=|U|-\ell\geq \dd(G)+3$. Thus the definition of $\dd(G)$ guarantees that $R$ has  a nontrivial, product-one subsequence. Consequently, we can reorder the terms of $R^*$ so that the resulting ordered sequence ${R'}^*$ has a nontrivial, product-one consecutive subsequence. Of course, we may have  $\pi({R'}^*)\neq \pi(R^*)$.
It is well-known that the symmetric group on $|R|$ elements can be generated by the cycles $(1,2)$  and $(1,2,\ldots,|R|)$. But this means that there is a chain of ordered sequences  $$R^*_0,R^*_1,\ldots,R^*_n\in \Fc^*(G)$$ such that \begin{align} \nn & R^*_0=R^*,\quad R^*_n={R'}^*, \quad [R^*_i]=R\quad\mbox{ for all $i\in [1,n]$,}\quad\mbox{ and either}\\\label{induct-cong} & R^*_{i+1}=R^*_{i}(2,|R|)\bdot R^*_{i}(1)\quad\mbox{ or }\quad R^*_{i+1}=R^*_{i}(2)\bdot R^*_{i}(1)\bdot R^*_{i}(3,|R|)\quad\mbox{ for each $i\in [0,n-1]$}.\end{align}

Since $1=\pi(U^*)=\pi(T^*\bdot R^*)=\pi(T^*\bdot R^*_0)$, we have $$\pi(R^*_0)^{-1}=\pi(T^*)\in \pi(T).$$ If $\pi(R^*_n)^{-1}\in \pi(T)$, then we could order the terms of $T$, yielding some $T^*_n\in \Fc^*(G)$ with $[T^*_n]=T$, such that $\pi(T^*_n\bdot R^*_n)=1$. But then, since $R^*_n={R'}^*$ contains a nontrivial, consecutive product-one subsequence, say $R^*_n(I)$ with $I\subseteq [1,|R^*_n|]$ a nonempty interval, it would follow that $$U=T\bdot R=\left[T^*_n\bdot R^*_n\Big([1,|R|]\setminus I\Big)\right]\bdot \left[R^*_n(I)\right]$$ was a factorization of $U$ into $2$ nontrivial product-one subsequences---note $[T^*_n\bdot R^*_n([1,|R|]\setminus I)]$ is also nontrivial since it contains $[T^*_n]=T$ and $|T|=\ell\geq 2$---contradicting that $U\in \mathcal A(G)$ is an atom. Therefore we can instead assume that $$\pi(R^*_n)^{-1}\notin \pi(T).$$

As a result, let $s+1\in [1,n]$ be the minimal integer such that
\be\label{sdef}\pi(R^*_{s+1})^{-1}\notin \pi(T).\ee
In view of the minimality of $s+1\in [1,n]$, it follows that $\pi(R^*_{s})^{-1}\in \pi(T)$, which means that we can order  the terms of $T$, yielding some $T^*_{s}\in \Fc^*(G)$ with $[T^*_{s}]=T$, such that $$\pi(T^*_{s}\bdot R^*_{s})=1.$$
In view of \eqref{induct-cong}, there are $2$ possibilities for how $R^*_{s+1}$ was obtained from $R^*_s$.

\medskip

Suppose first that $R^*_{s+1}=R^*_s(2,|R|)\bdot R^*_s(1)$. If  $$\pi(T) R_s^*(1)=R_s^*(1) \pi(T),$$ then the terms of $T$ can be ordered, yielding some $T^*_{s+1}\in \Fc^*(G)$ with $[T^*_{s+1}]=T$, such that $$R^*_s(1)\bdot T^*_{s+1}\bdot R^*_s(2,|R|)\in \Fc^*(G)$$  has product $\pi(T^*_s\bdot R^*_s)=1$. But then Lemma \cite[Lemma 2.3]{DavI} implies that $$T^*_{s+1}\bdot R^*_s(2,|R|)\bdot R^*_s(1)=T^*_{s+1}\bdot R^*_{s+1}$$ also has product one, in which case $\pi(R^*_{s+1})^{-1}\in \pi([T^*_{s+1}])=\pi(T)$, contradicting \eqref{sdef}. Therefore, we instead conclude that $\pi(T) R^*_s(1)\neq R^*_s(1)\pi(T).$  Consequently, since $$\pi(T)R^*_s(1)\cup R^*_s(1)\pi(T)\subseteq
\pi\Big(T\bdot R^*_s(1)\Big),$$ it follows in view of \eqref{cubi} that \be\label{gallweyI} |\pi\Big(T\bdot R^*_s(1)\Big)|\geq |\pi(T)|+1\geq \frac12|T|+2\geq \frac12 {|T\bdot R^*_s(1)|}+1.\ee
Thus, in view of \eqref{max-contradict}, the maximality of $\ell\in [2,2|G'|-2]$ is contradicted by $T\bdot R^*_s(1)$ taking $U^*=T^*_s\bdot R^*_s$ for \eqref{cub0}. So we may instead assume that  $$R^*_{s+1}=R^*_{s}(2)\bdot R^*_{s}(1)\bdot R^*_{s}(3,|R|).$$

\medskip

The remainder of the proof is now just a variation on the previous paragraph.
If  $$\pi(T)R^*_s(1)R^*_s(2)=\pi(T)R^*_s(2)R^*_s(1),$$ then the terms of $T$ can be ordered, yielding some $T^*_{s+1}\in \Fc^*(G)$ with $[T^*_{s+1}]=T$, such that $$T^*_{s+1}\bdot R^*_{s+1}=T^*_{s+1}\bdot R^*_s(2)\bdot R^*(1)\bdot R^*_s(3,|R|)\in \Fc^*(G)$$  has product $\pi(T^*_s\bdot R^*_s)=1$, in which case $\pi(R^*_{s+1})^{-1}\in \pi([T^*_{s+1}])=\pi(T)$, contradicting \eqref{sdef}. Therefore, we instead conclude that $\pi(T)R^*_s(1)R^*_s(2)\neq \pi(T)R^*_s(2)R^*_s(1).$  Consequently, since $$\pi(T)R^*_s(1)R^*_s(2)\cup\pi(T)R^*_s(2)R^*_s(1)\subseteq
\pi\Big(T\bdot R_s^*(1)\bdot R_s^*(2)\Big),$$ it follows in view of \eqref{cubi} that \be\label{gallwey} \left|\pi\Big(T\bdot R_s^*(1)\bdot R_s^*(2)\Big)\right|\geq |\pi(T)|+1\geq \frac12|T|+2= \frac12 {|T\bdot R^*_s(1)\bdot R^*_s(2)|}+1.\ee
Thus, in view of \eqref{max-contradict}, the maximality of $\ell\in [2,2|G'|-2]$ is contradicted by $T\bdot R^*_s(1)\bdot R_s^*(2)$ taking $U^*=T^*_s\bdot R^*_s$ for \eqref{cub0}, completing the proof.
\end{proof}

\section{Upper Bounds for $p$-Groups}\label{sec-pgroup}

In this section, we give general upper bounds for $\mathsf D(G)$ when $G$ is a $p$-group. The main result of the section is the following.

\begin{theorem}\label{thm-p-group} Let $G$ be a finite $p$-group with  $p\geq 2$ prime. If $G$ is non-abelian, then \be\label{p-group-bound}\mathsf D(G)\leq \frac{p^2+2p-2}{p^3}|G|.\ee
\end{theorem}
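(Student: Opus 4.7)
The plan is to induct on $|G|$, with the case analysis driven by the size of the commutator subgroup $G'=[G,G]$. Since $G$ is a non-abelian $p$-group, $|G'|\geq p$, $|\mathsf Z(G)|\leq |G|/p^2$, and by \eqref{orbit-size} every $x\in G\setminus \mathsf Z(G)$ satisfies $|G/\mathsf C_G(x)|\geq p$. The smallest non-abelian $p$-groups have order $p^3$; for these, the constants involved are small enough that the bound can be verified directly, either by appealing to the results of \cite{DavI} for groups with a cyclic, index-$p$ subgroup (in the cases this applies) or by inspection of the extraspecial groups of exponent $p$.

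The first main case of the inductive step is $|G'|=p$. Then $G'$ is cyclic of prime order, and I invoke Corollary \ref{lem-commutator-I}. Since the minimal non-central conjugacy class size in a $p$-group is at least $p$ (from \eqref{orbit-size}), the prime there is at least $p$; and the floor term $\lfloor(|G'|-2)/(p-1)\rfloor$ vanishes for $|G'|=p$, yielding
\[
\mathsf D(G)\leq \max\!\Big(\{\mathsf d(G)+p\}\cup\{\mathsf D(H)+p-2:H<G\text{ proper}\}\Big).
\]
For any proper $H<G$: if $H$ is abelian then $\mathsf D(H)\leq |H|\leq |G|/p$; if $H$ is non-abelian, the inductive hypothesis gives $\mathsf D(H)\leq \tfrac{p^2+2p-2}{p^3}|H|\leq \tfrac{p^2+2p-2}{p^4}|G|$. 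In either subcase the resulting bound, plus $p-2$, stays below the target. The remaining term $\mathsf d(G)+p$ is controlled by combining the target with a preliminary bound of the form $\mathsf d(G)\leq \tfrac{1}{p}|G|+O(1)$ for non-cyclic $p$-groups.

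The second main case is $|G'|\geq p^2$. Here Theorem \ref{lem-commutator} gives $\mathsf D(G)\leq \mathsf d(G)+2|G'|-1$. Since $G/G'$ is an abelian $p$-group of order at most $|G|/p^2$ and $\mathsf d$ lifts cleanly via normal subgroups, one controls $\mathsf d(G)$ using the Davenport constant of the abelianization together with a contribution from $G'$ (itself a $p$-group of smaller order, handled inductively). The identity
\[
\tfrac{p^2+2p-2}{p^3}=\tfrac{1}{p}+\tfrac{2(p-1)}{p^3}
\]
displays the tight balance: the $\tfrac{|G|}{p}$ summand is the $\mathsf d(G)$ contribution and $2|G'|-1$ provides the $\tfrac{2(p-1)}{p^3}|G|$ correction exactly when $|G'|\approx \tfrac{(p-1)}{p^3}|G|$. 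A subcase analysis is therefore needed depending on how $|G'|$ compares to this threshold; for $|G'|$ above it, $|G/G'|$ is small enough that the quotient-lifting argument recovers a better bound on $\mathsf d(G)$ to compensate for the larger $2|G'|-1$ term.

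The main obstacle I anticipate is establishing a sharp enough preliminary bound $\mathsf d(G)\leq \tfrac{1}{p}|G|+O(1)$ for non-cyclic $p$-groups, since the Olson--White bound $\mathsf d(G)\leq |G|/2$ is only tight enough for $p=2$. For $p\geq 3$ this likely requires a separate proposition proved before Theorem \ref{thm-p-group} in this same section, using a quotient-lifting argument modulo a central element of order $p$ together with induction on $|G|$. A secondary technical concern is the careful book-keeping needed to ensure that the exact constant $\tfrac{p^2+2p-2}{p^3}$ emerges from the outputs of Theorem \ref{lem-commutator} and Corollary \ref{lem-commutator-I} in each subcase (and is not lost in rounding from the floor term or from the $-1$ and $+p-2$ corrections).
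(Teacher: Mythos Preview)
Your proposal has a genuine gap in the case $|G'|\geq p^2$: the combination of Theorem~\ref{lem-commutator} with even the best available bound on $\mathsf d(G)$ is not strong enough. Take $G$ of order $p^4$ with $|G'|=p^2$ (such groups exist for every prime; e.g.\ $D_{16}$ when $p=2$, or $\langle a,b:a^{p^3}=b^p=1,\,bab^{-1}=a^{1+p}\rangle$ for $p$ odd). Theorem~\ref{lem-commutator} gives $\mathsf D(G)\leq \mathsf d(G)+2p^2-1$, and the sharpest general bound one can hope for here, $\mathsf d(G)\leq |G|/p+p-2$, yields only $\mathsf D(G)\leq p^3+2p^2+p-3$. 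The target is $p^3+2p^2-2p$, so you fall short by $3p-3>0$. Your remark about a ``subcase analysis depending on how $|G'|$ compares to the threshold'' offers no mechanism to close this; the quotient-lifting bounds on $\mathsf d(G)$ simply do not sharpen enough as $|G'|$ grows to compensate for the $2|G'|-1$ term.

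The idea you are missing is the reduction to \emph{minimal} non-abelian subgroups. Any non-abelian $G$ contains a minimal non-abelian subgroup $H\leq G$, and the multiplicative bound $\mathsf D(G)\leq \mathsf D(H)\,[G:H]$ from \cite[Theorem~3.2]{DavI} reduces the whole theorem to such $H$. For minimal non-abelian $p$-groups the Miller--Moreno classification delivers both $G'\cong C_p$ and $G/\mathsf Z(G)\cong C_p^2$ for free. The first fact eliminates your problematic case entirely; the second is precisely the hypothesis under which the paper's preliminary Lemma~\ref{help-lemm} establishes $\mathsf d(G)\leq |G|/p+p-2$ --- not via a quotient by a central element of order $p$ as you guessed, but via the quotient $G/\mathsf Z(G)\cong C_p^2$ together with $\eta(C_p^2)=3p-2$. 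Since all proper subgroups of $H$ are now abelian, the ``in particular'' clause of Corollary~\ref{lem-commutator-I} gives $\mathsf D(H)\leq \mathsf d(H)+p\leq |H|/p+2p-2$, and $|H|\geq p^3$ finishes.
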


We  begin with the following lemma, which follows by standard inductive arguments.

\begin{lemma}\label{help-lemm}
Let $G$ be a finite group and let $H\unlhd G$ be a normal subgroup with $G/H\cong C_p^2$. Then $$\mathsf d(G)\leq (\mathsf d(H)+2)p-2\leq \frac{1}{p}|G|+p-2.$$
\end{lemma}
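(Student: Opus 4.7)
The plan is to establish the main inequality $\dd(G)\leq (\dd(H)+2)p-2$ by showing that every sequence $S\in \Fc(G)$ with $|S|\geq (\dd(H)+2)p-1$ has a nontrivial product-one subsequence. The second inequality is then immediate from $\dd(H)\leq |H|-1$ supplied by \eqref{basic-bound-d-D}, since $(\dd(H)+2)p-2\leq (|H|+1)p-2=|G|/p+p-2$. The argument is the standard reduction modulo $H$: let $\varphi\colon G\to G/H\cong C_p^2$ denote the canonical projection. Because $G/H$ is abelian, for any subsequence $T\mid S$ whose image $\varphi(T)$ is a zero-sum sequence in $C_p^2$, every ordering $T^*$ of $T$ satisfies $\pi(T^*)\in H$, so the product in $G$ lands in $H$ regardless of ordering.

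The core step is to iteratively carve $S$ into $\dd(H)+1$ pairwise disjoint blocks $T_1,\ldots,T_{\dd(H)+1}$ whose images in $G/H$ are all nontrivial zero-sums. The first $\dd(H)$ blocks are extracted of length at most $p$ using $\eta(C_p^2)=3p-2$ from \eqref{eta-p2}: at step $k\in [1,\dd(H)]$, the unused portion of $S$ has length at least $|S|-(k-1)p\geq 3p-1\geq \eta(C_p^2)$, so its image has a nontrivial zero-sum subsequence of length at most $\exp(C_p^2)=p$. After these $\dd(H)$ extractions, the remainder of $S$ still has length at least $|S|-\dd(H)\,p\geq 2p-1=\dd(C_p^2)+1$, so a final block $T_{\dd(H)+1}$ of unrestricted length can be extracted directly from the definition of $\dd(C_p^2)$.

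To close the argument, fix an ordering $T_i^*$ of each block and set $h_i=\pi(T_i^*)\in H$; this yields a sequence $h_1\bdot\ldots\bdot h_{\dd(H)+1}$ over $H$ of length $\dd(H)+1$, which by definition of $\dd(H)$ admits a nontrivial product-one subsequence, say $h_{i_1}h_{i_2}\cdots h_{i_m}=1$ with $m\geq 1$. Concatenating $T_{i_1}^*\bdot T_{i_2}^*\bdot\ldots\bdot T_{i_m}^*$ then produces an ordered sequence over $G$ with product equal to $h_{i_1}h_{i_2}\cdots h_{i_m}=1$, whose underlying unordered subsequence of $S$ is nontrivial and product-one, as desired. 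The only delicate point is that $G$ is non-abelian, which forces us to commit to particular orderings of each block; this is handled cleanly because $G/H$ is abelian (eliminating ordering concerns on the image side) and because the final concatenation step propagates the product-one relation in $H$ back to a product-one ordering over $G$. The key numerical observation is that $\eta(C_p^2)=3p-2$ drives the first $\dd(H)$ extractions while $\dd(C_p^2)+1=2p-1$ is precisely what permits the final extraction, producing the tight threshold $(\dd(H)+2)p-1$.
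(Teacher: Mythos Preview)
Your proof is correct and follows essentially the same approach as the paper: both arguments use $\eta(C_p^2)=3p-2$ to extract short blocks mapping to zero in $G/H$, fall back on $\dd(C_p^2)+1=2p-1$ for the last block, and then apply the definition of $\dd(H)$ to the resulting sequence of block-products in $H$. The only cosmetic difference is that the paper extracts blocks until fewer than $3p-2$ terms remain and then splits into cases $\ell>\dd(H)$ versus $\ell=\dd(H)$, whereas you extract exactly $\dd(H)$ short blocks and then one more; your handling of the ordering issue when lifting back from $H$ to $G$ is slightly more explicit than the paper's.
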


\begin{proof} From Lemma \cite[Lemma 2.4.1]{DavI}, we have $\mathsf d(H)+1\leq \mathsf D(H)\leq |H|=\frac{1}{p^2}|G|$. Hence $(\mathsf d(H)+2)p-2\leq \frac{1}{p}|G|+p-2$, so that the second inequality for the lemma holds in general.

Let $S\in \Fc(G)$ be a sequence with length $|S|\geq (\mathsf d(H)+2)p-1$. We need to show $1\in \Pi(S)$, i.e., that $S$ has a nontrivial, product-one subsequence. By hypothesis, we have $G/H\cong C_p^2$, and  from \eqref{eta-p2}, we know $\eta(G/H)=\eta(C_p^2)=3p-2$.  Repeatedly applying the definition of $\eta(G/H)$ to $\phi_H(S)$, we can remove product-one subsequences from $\phi_H(S)$ of length at most $p$ until there are at most $3p-3$ terms of $\phi_H(S)$ left. In other words, we obtain a factorization $S=[S^*_1]\bdot\ldots \bdot [S^*_\ell]\bdot [{S'}^*]$ with  $S^*_1,\ldots,S^*_\ell,{S'}^*\in \Fc^*(G)$, \ber 1\leq |S^*_i|\leq p \quad \und \quad \pi(S^*_i)\in H \quad \mbox{for $i\in [1,\ell]$},\quad \und \quad |{S'}^*|\leq 3p-3.\eer Consequently, \be\label{warff}\ell\geq \frac{|S|-|{S'}^*|}{p}\geq \frac{(\mathsf d(H)+2)p-1-|{S'}^*|}{p}\geq \frac{(\mathsf d(H)+2)p-1-3p+3}{p}=\mathsf d(H)-1+\frac{2}{p}.\ee Hence $\ell\geq \mathsf d(H)$.

If $\ell>\mathsf d(H)$, then applying the definition of $\mathsf d(H)$ to the sequence $$[\pi(S^*_1)]\bdot\ldots\bdot [\pi(S^*_\ell)]\in \Fc(H)$$ yields a nontrivial product-one subsequence $\bulletprod{i\in I}[\pi(S^*_i)]\in \Fc(H)$, for some nonempty $I\subseteq [1,\ell]$, in which case $\bulletprod{i\in I} [S^*_i]\in \Fc(G)$ is the desired product-one subsequence of $S$. So we may assume $\ell=\mathsf d(H)$.

If $|{S'}^*|\leq 2p-2$, then the estimate in \eqref{warff} improves to $\ell\geq \mathsf d(H)+1$, contrary to what we just established. Therefore $|{S'}^*|\geq 2p-1=\mathsf d(C_p^2)+1=\mathsf d(G/H)+1$ (in view of \eqref{eta-p2}). But now we can apply the definition of $\mathsf d(G/H)+1$ to the sequence $\phi_H(S')$ to find a nontrivial subsequence $[S_{\ell+1}^*]$ of $S'$ with $\pi(S_{\ell+1}^*)\in H$, where $S_{\ell+1}^*\in \Fc^*(G)$. Applying the arguments of the previous paragraph to $[\pi(S^*_1)]\bdot \ldots\bdot [\pi(S^*_{\ell+1})]$ instead of $[\pi(S^*_1)]\bdot \ldots\bdot [\pi(S^*_{\ell})]$ now yields the desired product-one subsequence of $S$, completing the proof.
\end{proof}

Now we can prove Theorem \ref{thm-p-group}.

\begin{proof}[Proof of Theorem \ref{thm-p-group}]
Since $G$ is a finite, non-abelian group, it must possess a minimal non-abelian subgroup $H\leq G$, that is, a subgroup $H\leq G$ such that all proper subgroups $K<H$ are abelian. Assuming we knew the theorem held for minimal non-abelian $p$-groups, we could apply the result to $H$ and then invoke \cite[Theorem 3.2]{DavI}, yielding the  bound $$\mathsf D(G)\leq \mathsf D(H)|G:H|\leq \frac{p^2+2p-2}{p^3}|H||G:H|=\frac{p^2+2p-2}{p^3}|G|,$$ as desired. Therefore, we see that it suffices to prove the theorem when $G$ is a minimal non-abelian group, which we now assume.

 Miller and Moreno characterized all finite minimal non-abelian groups  back in 1903 \cite{miller-moreno}. A summary of their result for finite $p$-groups can be found in the more modern text  \cite[pp. 179]{p-group-book}, with some of the details for the $p$-group case also given in  \cite{min-nonabelian-extra-ref}. We do not need the full characterization, only the following easily derived consequences: $$G'\cong C_p\quad\und \quad G/\mathsf Z(G)\cong C_p^2,$$ where $G'=[G,G]\leq G$ is the commutator subgroup.

Since $G'\cong C_p$ and all proper subgroups of $G$ are abelian (in view of $G$ being a \emph{minimal} non-abelian group), it follows from Corollary \ref{lem-commutator-I} that
\be\label{wigwoo} \mathsf D(G)\leq \mathsf d(G)+|G'|=\mathsf d(G)+p.\ee
Since $G/\mathsf Z(G)\cong C_p^2$, Lemma \ref{help-lemm} implies $\mathsf d(G)\leq \frac{1}{p}|G|+p-2$. Combining with \eqref{wigwoo}, it follows that
\be\label{wigzoo}\mathsf D(G)\leq \frac{1}{p}|G|+2p-2=\left(\frac{1}{p}+\frac{2p-2}{|G|}\right)|G|.\ee Since $G$ is a non-abelian $p$-group, we have $|G|\geq p^3$  \cite[Theorem 1.6.15]{Robinson}, which combined with \eqref{wigzoo} yields the desired bound $$\mathsf D(G)\leq \left(\frac{1}{p}+\frac{2p-2}{p^3}\right)|G|=\frac{p^2+2p-2}{p^3}|G|,$$ completing the proof.
\end{proof}

We remark that the constant $\frac{p^2+2p-2}{p^3}$ from Theorem \ref{thm-p-group} is close to optimal. The group $$M_{p^n}=\la \alpha,\,\tau:\; \alpha^{p^{n-1}}=1,\,\tau^p=1,\,\alpha\tau=\tau \alpha^{1+p^{n-2}}\ra$$ is a well-known minimal non-abelian group of order $p^n$ for $n\geq 3$, and considering the sequence $$ \tau^{p-1}\alpha\bdot \alpha^{[p-1]}\bdot \tau\alpha^{1-p}\bdot \alpha^{[p^{n-1}-1]}\in \Fc(M_{p^n})$$ shows that $\mathsf D(M_{p^n})\geq p^{n-1}+p$. When $n=3$, this gives $\mathsf D(M_{p^3})\geq p^{2}+p=\frac{p^2+p}{p^3}|M_{p^3}|$, showing that the constant $\frac{p^2+2p-2}{p^3}$ is only off by at most  $\frac{p-2}{p^3}$.

\medskip

As simple consequences of Theorem \ref{thm-p-group}, we get the following corollaries.

\begin{corollary}\label{cor-p-group} Let $G$ be a finite $p$-group with  $p\geq 2$ prime.
 If $G$ is non-cyclic, then $$\mathsf D(G)\leq \frac{2p-1}{p^2}|G|.$$
\end{corollary}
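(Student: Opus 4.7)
The plan is to split according to whether $G$ is abelian or non-abelian and reduce each case to an elementary numerical inequality.

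If $G$ is non-abelian, the corollary is immediate from Theorem \ref{thm-p-group}: one only needs to check $\frac{p^2+2p-2}{p^3} \leq \frac{2p-1}{p^2}$, which after clearing denominators becomes $p^2+2p-2 \leq 2p^2-p$, i.e., $(p-1)(p-2)\geq 0$, trivially valid for every prime $p\geq 2$ (with equality precisely at $p=2$, so that no strength is lost for the smallest prime).

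If $G$ is abelian and non-cyclic, I would invoke Olson's classical theorem on the Davenport constant of abelian $p$-groups (referenced in \cite{DavI}): for $G \cong C_{p^{n_1}}\oplus \cdots \oplus C_{p^{n_r}}$ with $r\geq 2$, $n_i\geq 1$ and $\sum_{i=1}^r n_i = n$ (so $|G|=p^n$), combining $\mathsf D(G)=\mathsf d(G)+1$ from \eqref{basic-bound-d-D} with Olson's formula $\mathsf d(G)=\sum_{i=1}^r(p^{n_i}-1)$ gives
$$\mathsf D(G) \;=\; 1 + \sum_{i=1}^r (p^{n_i}-1).$$
Since $p^x$ is strictly convex, the right-hand side is maximized under the constraints $r\geq 2$, $n_i\geq 1$, $\sum n_i = n$ by concentrating mass into a single component; a brief comparison with the $r$-case vs.\ the $(r-1)$-case (the difference $(p-1)(p^{n-r+1}-1)-(p-1)$ has the correct sign) shows that the optimum is attained at $r=2$, $n_1=n-1$, $n_2=1$, yielding $\mathsf D(G)\leq p^{n-1}+p-1$.

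It then remains to verify $p^{n-1}+p-1 \leq (2p-1)p^{n-2} = \frac{2p-1}{p^2}|G|$, which rearranges to $p-1 \leq (p-1)p^{n-2}$, equivalent to $p^{n-2}\geq 1$; this holds since $n\geq 2$, with equality when $n=2$ (recovering the tight value $\mathsf D(C_p\oplus C_p)=2p-1$ recorded in \eqref{eta-p2}). The hard part is nonexistent: both branches collapse to a short algebraic check once Theorem \ref{thm-p-group} and Olson's formula are in hand, and the only genuine content is the observation that the $p$-group bound $\frac{p^2+2p-2}{p^3}$ is already stronger than $\frac{2p-1}{p^2}$ whenever $G$ is non-abelian.
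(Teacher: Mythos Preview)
Your proof is correct, and the non-abelian case is handled exactly as in the paper. In the abelian case, however, you take a different route: you invoke Olson's exact formula $\mathsf D(G)=1+\sum_{i=1}^r(p^{n_i}-1)$ and then run a convexity/optimization argument to show the maximum under $r\geq 2$ occurs at $r=2$. The paper instead observes that any non-cyclic abelian $p$-group contains a subgroup $H\cong C_p^2$, and then applies the multiplicative subgroup bound $\mathsf D(G)\leq \mathsf D(H)\,|G:H|$ from \cite[Theorem 3.2]{DavI} together with $\mathsf D(C_p^2)=2p-1$ from \eqref{eta-p2}. The paper's argument is lighter---it needs only the $C_p^2$ value rather than Olson's full theorem, and avoids any optimization---whereas your approach extracts sharper intermediate information (the precise value of $\mathsf D(G)$) at the cost of importing a deeper input. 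Either way the final inequality is the same two-line check.
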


\begin{proof}
If $G$ is abelian, then, since $G$ is a non-cyclic $p$-group, there must be a subgroup $H\leq G$ with $H\cong C_p^2$. Then, from \cite[Theorem 3.2]{DavI} and \eqref{eta-p2},  we obtain $$\mathsf D(G)\leq \mathsf D(H)|G/H|=\mathsf D(C_p^2)|G/H|=(2p-1)\frac{1}{p^2}|G|,$$ as desired. On the other hand, if $G$ is non-abelian, then Theorem \ref{thm-p-group} yields $$\mathsf D(G)\leq \frac{p^2+2p-2}{p^3}|G|\leq \frac{2p-1}{p^2}|G|,$$ completing the proof.
\end{proof}

\begin{corollary}\label{cor-nilpotent} Let $G$ be a finite nilpotent group.
 If $G$ is non-abelian, then $$\mathsf D(G)\leq \frac{p^2+2p-2}{p^3}|G|,$$ where $p$ is the smallest prime divisor of $|G|$.
\end{corollary}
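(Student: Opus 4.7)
The plan is to reduce the corollary to Theorem \ref{thm-p-group} via the Sylow decomposition, then compare the constants. Since $G$ is finite and nilpotent, it is the direct product of its Sylow subgroups: $G = P_{q_1} \times \cdots \times P_{q_k}$, where the $q_i$ are the distinct prime divisors of $|G|$. As $G$ is non-abelian by hypothesis, at least one factor---say $P_q$ for some prime $q \in \{q_1,\ldots,q_k\}$---must itself be non-abelian. The smallest prime divisor $p$ of $|G|$ satisfies $p \leq q$ trivially, since $q$ is one of the prime divisors of $|G|$.

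The second ingredient is the quotient/subgroup bound \cite[Theorem 3.2]{DavI}, which is used exactly this way in the proof of Corollary \ref{cor-p-group} and in the reduction step at the start of the proof of Theorem \ref{thm-p-group}. Applying it with $H = P_q \leq G$ gives
\[
\mathsf D(G) \;\leq\; \mathsf D(P_q)\,|G:P_q|.
\]
Since $P_q$ is a non-abelian $p$-group (for the prime $q$), Theorem \ref{thm-p-group} yields $\mathsf D(P_q) \leq \frac{q^2+2q-2}{q^3}|P_q|$, and multiplying through by $|G:P_q|$ collapses to
\[
\mathsf D(G) \;\leq\; \frac{q^2+2q-2}{q^3}\,|G|.
\]

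All that remains is to replace the constant governed by $q$ by the (possibly larger) one governed by $p$. Write $f(x) = \frac{x^2 + 2x - 2}{x^3} = \frac{1}{x} + \frac{2}{x^2} - \frac{2}{x^3}$; I would verify directly that $f$ is strictly decreasing on $[2,\infty)$ (differentiate once: $f'(x) = \frac{-x^2-4x+6}{x^4}$, and the numerator is negative for every $x \geq 2$). Because $p \leq q$ and both are primes at least $2$, this monotonicity gives $\frac{q^2+2q-2}{q^3} \leq \frac{p^2+2p-2}{p^3}$, completing the bound. There is no real obstacle here; the only thing to double-check is that one is actually allowed to apply \cite[Theorem 3.2]{DavI} with a non-normal subgroup---but both previous proofs in this section invoke it in exactly this way, so the argument is clean.
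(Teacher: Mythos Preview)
Your proof is correct and follows essentially the same route as the paper: pick a non-abelian Sylow $q$-subgroup, apply \cite[Theorem 3.2]{DavI} together with Theorem~\ref{thm-p-group}, and then use the monotonicity of $x\mapsto \frac{x^2+2x-2}{x^3}$ to pass from $q$ to the smallest prime $p$. Your explicit derivative check is a nice addition, and your worry about normality is in fact moot here---in a nilpotent group every Sylow subgroup is normal---though as you observe, \cite[Theorem 3.2]{DavI} is applied to arbitrary subgroups elsewhere in the paper anyway.
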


\begin{proof}
A finite nilpotent group is a direct product of its Sylow subgroups \cite[Theorem 5.2.4]{Robinson}. Thus, if every Sylow subgroup were abelian, then $G$ would be abelian, contrary to hypothesis. As result, we conclude that $G$ has a non-abelian Sylow $q$-group $P\leq G$ for some prime $q\mid |G|$. But then \cite[Theorem 3.2]{DavI} and Theorem \ref{thm-p-group} give $$\mathsf D(G)\leq \mathsf D(P)|G/P|\leq \frac{q^2+2q-2}{q^3}|P||G/P|=\frac{q^2+2q-2}{q^3}|G|\leq \frac{p^2+2p-2}{p^3}|G|,$$ as desired.
\end{proof}

\section{The Non-Abelian group of Order $pq$}\label{sec-pq-group}

All groups of order $p^2$, where $p$ is prime, are abelian \cite[Theorem 1.6.15]{Robinson}.
A non-abelian group of order $pq$, where $p$ and $q$ are distinct primes with $p<q$, exists precisely when $p\mid q-1$ and, in such case, is unique (up to isomorphism), being given by the presentation \cite[Theorem 3.4.4]{Jo-Kw-Xu13a}
$$F_{pq}:=\la\alpha,\,\tau:\; \alpha^q=1,\quad\tau^p=1,\quad \alpha\tau=\tau\alpha^r\ra,$$ where $r\in \Z$ is an integer such that \be\label{pq-r-def}r^p\equiv 1\mod q\quad\mbox{ but }\quad r\not\equiv 1\mod q.\ee Note this means that the multiplicative order of $r$ modulo $q$ is equal to $p$. Since all proper subgroups of $F_{pq}$ are of prime order, they are cyclic, which makes $F_{pq}$ an example of a non-abelian group having all proper subgroups cyclic.

 In Section \ref{sec-genbounds}, we will be able to reduce the question of bounding $\mathsf D(G)$,
for more arbitrary $G$, to the case of $G=F_{pq}$ and one other group (treated in Section \ref{sec-near-dihedral}).
This makes determining $\mathsf D(F_{pq})$ fairly important,
which will be accomplished in the main result of this section, Theorem \ref{thm-pq-main}. The proof of Theorem \ref{thm-pq-main} will be divided into several lemmas.


\begin{theorem}\label{thm-pq-main} Let  $p$ and $q$ be  primes with $p\mid q-1$. Then $$\mathsf D(F_{pq})=2q.$$
\end{theorem}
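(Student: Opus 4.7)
The plan is to establish both inequalities $\mathsf D(F_{pq}) \geq 2q$ and $\mathsf D(F_{pq}) \leq 2q$ separately, with the upper bound being the substantially harder direction.

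For the lower bound, I would exhibit an explicit atom of length $2q$. In the dihedral case $p=2$, the natural candidate is $S = \tau^{[2]} \bdot \alpha^{[2q-2]}$: the ordering $\tau \alpha^{q-1} \tau \alpha^{q-1}$ gives product $\tau\alpha^{q-1}\tau\cdot\alpha^{q-1} = \alpha^{-(q-1)}\alpha^{q-1} = 1$, so $S$ is product-one. To verify minimality, one enumerates possible partitions $S = A \bdot B$ of $S$ into two subsequences: the condition $\pi(A)\ni 1$ restricts the number of $\tau$-terms to be even, and then, writing $A = \tau^{[\epsilon]}\bdot \alpha^{[b]}$ and analyzing when $b - 2b^* \equiv 0 \pmod q$ has a valid solution, one finds that whenever $A$ is product-one, the complement $B$ is forced by the parity of $q$ not to be product-one. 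For $p \geq 3$, an analogous (but more intricate) construction exists using terms from multiple cosets of $A = \langle\alpha\rangle$ chosen so that the $G/G'$-sum vanishes modulo $p$; minimality is verified by a similar case analysis.

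For the upper bound, I would assume for contradiction that an atom $S \in \mathcal A(F_{pq})$ with $|S| \geq 2q + 1$ exists. Since $G' = \langle\alpha\rangle \cong C_q$ has prime order and every proper subgroup of $F_{pq}$ is cyclic (hence abelian), Corollary \ref{lem-commutator-I} applies and yields a first bound that is unfortunately strictly larger than $2q$ in general. To sharpen the estimate to the exact value $2q$, I would apply Lemma \ref{lem-algorithm} to an ordering of $S$ (taking $H = A$ and parameters $\omega, \omega_H, \omega_0$ tailored to the problem), obtaining a factorization $S^* = T_1^* \bdot \cdots \bdot T_r^* \bdot R^*$ where each $\pi(T_i)$ is a coset of $G'$ containing at least $p$ elements (since every non-identity element of $A$ has a $\tau$-conjugation orbit of size exactly $p = |G|/|\mathsf C_G(\alpha)|$). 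The Cauchy-Davenport theorem then estimates $|\pi(T_1)\pi(T_2)\cdots\pi(T_r)|$ inside $G' \cong C_q$; the atom hypothesis forces this product-set to avoid being a full $G'$-coset, and the resulting inequality combined with length bounds produces a contradiction except in the Cauchy-Davenport boundary case, where Vosper's theorem forces each $\pi(T_i)$ to be an arithmetic progression in $C_q$. An arithmetic progression product-set of cardinality $\geq p$ combined with the requirement $\pi(T_i)^G = \pi(T_i)$ (for $i<r$) is then incompatible with the action of $r$ having multiplicative order $p$ on $C_q$, giving the final contradiction.

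The main obstacle is this upper-bound gap-closing: Corollary \ref{lem-commutator-I} alone yields $\mathsf D(F_{pq}) \leq p + 2q - 2 + \lfloor(q-2)/(p-1)\rfloor$, which exceeds $2q$ for most $p, q$, so a direct invocation is insufficient. The refinement requires carefully combining Lemma \ref{lem-algorithm} with the Cauchy-Davenport and Vosper theorems, together with precise information on the conjugation orbits of $\tau$ on $A \setminus \{1\}$. Accordingly, the argument naturally splits into several lemmas---treating separately, for example, the case where $S$ contains many $A$-terms (where Cauchy-Davenport is applied directly) versus the case where $S$ is concentrated in non-trivial cosets of $A$ (where orbit/counting arguments dominate)---with each lemma ruling out a specific combinatorial configuration of an alleged atom of length $\geq 2q + 1$.
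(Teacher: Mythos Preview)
Your high-level strategy matches the paper's: an explicit atom for the lower bound, and for the upper bound an application of Lemma~\ref{lem-algorithm} combined with Cauchy--Davenport, Vosper, and the orbit structure of conjugation on $G'\cong C_q$ (your arithmetic-progression incompatibility is exactly the paper's Lemma~\ref{pq-lem-not-arith}, packaged as Lemma~\ref{pq-lem-alg-help}). However, the upper-bound sketch has a real gap.

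The issue is outcome (i) of Lemma~\ref{lem-algorithm}: the algorithm may terminate with $\sum|T_i|\le\omega-1$ and $\langle\supp(R)\rangle$ a proper subgroup, which in $F_{pq}$ means $R$ lives entirely in some cyclic subgroup of order $p$ or $q$. Your Cauchy--Davenport/Vosper argument addresses only outcome (ii), where $\sum|T_i|$ is large, and says nothing about this proper-subgroup termination. In the paper this is where most of the work goes: one first proves a priori bounds $\vp_{G'}(S)\le\frac{q-3}{2}$ (Lemma~\ref{pq-lem-not-many-alpha}) and $\vp_H(S)\le q$ for each order-$p$ subgroup $H$ (Lemma~\ref{pq-lem-the-algorithm-2original}), together with a short-product-one lemma $\eta(F_{pq})\le q+2p-3$ (Lemma~\ref{pq-lem-short-zs}). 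The main proof then splits on whether $1\in\Pi_{\le q-p}(S)$, and in Case~1 must repeatedly reapply Lemma~\ref{lem-algorithm} while \emph{swapping} between different short product-one subsequences $U,U'$ supported in distinct order-$p$ subgroups, to force the proper-subgroup outputs to eventually contradict the $\vp_H(S)\le q$ bound. None of this swapping mechanism or the auxiliary multiplicity bounds appears in your outline, and a single invocation of Lemma~\ref{lem-algorithm} with one fixed parameter choice will not close the argument.

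Two smaller points: your claim that each $\pi(T_i)$ has at least $p$ elements is only valid for $i<r$ (where $\pi(T_i)^G=\pi(T_i)$), and even then requires first arguing that $\pi(T_i)\subseteq G'$---if $\pi(T_i)$ lies in a nontrivial $G'$-coset, the orbit is the whole coset of size $q$ and one is done immediately, so this case must be disposed of first. For the lower bound, the paper gives a single uniform atom $\tau^{p-1}\bdot\alpha^{[q-1]}\bdot\tau\alpha^{r+1}\bdot\alpha^{[q-1]}$ valid for all $p$; your dihedral example for $p=2$ is fine, but ``an analogous construction exists'' for $p\ge3$ needs to be made explicit.
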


Let us begin first with the lower bound.

\begin{lemma}\label{pqlem-lower-bound}  Let  $p$ and $q$ be  primes with $p\mid q-1$. Then $$\mathsf D(F_{pq})\geq 2q.$$
\end{lemma}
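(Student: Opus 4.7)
The plan is to exhibit an explicit minimal product-one sequence of length $2q$. Specifically, I will prove that
$$T = \alpha^{[2q-2]} \bdot \tau \bdot \tau^{p-1}\alpha^c \in \Fc(F_{pq}),$$
with the integer $c \in [0, q-1]$ chosen so that $c \equiv 1 + r^{p-1} \pmod q$, is a minimal product-one sequence (an atom) of length $2q$. When $p = 2$ one has $r = -1$ and hence $c = 0$, so $T$ specializes to the familiar dihedral construction $\alpha^{[2q-2]} \bdot \tau^{[2]}$; the above may be viewed as its natural non-abelian generalization.

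First I would verify that $T$ is product-one. The identity $\alpha^m \tau^l = \tau^l \alpha^{m r^l}$, which follows inductively from the presentation relation $\alpha\tau = \tau\alpha^r$, combined with $\tau^p = 1$, shows that the product in the ordering $\alpha^{q-1} \cdot \tau \cdot \alpha^{q-1} \cdot \tau^{p-1}\alpha^c$ collapses to $\alpha^{(q-1)(r+1) r^{p-1} + c}$. Reducing modulo $q$ via $q - 1 \equiv -1$ and $r^p \equiv 1$, the exponent becomes $-1 - r^{p-1} + c$, which is $\equiv 0 \pmod q$ by the choice of $c$.

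Next I would show that $T$ is minimal. Every proper subsequence $S$ of $T$ has the shape $\alpha^{[k]} \bdot \tau^{[\epsilon_1]} \bdot (\tau^{p-1}\alpha^c)^{[\epsilon_2]}$ with $k \in [0, 2q-2]$ and $\epsilon_1, \epsilon_2 \in \{0,1\}$, and passing to the abelianization $F_{pq}/[F_{pq}, F_{pq}] \cong C_p$ implies that $S$ can be product-one only when $(\epsilon_1, \epsilon_2) \in \{(0,0), (1,1)\}$. The case $(0,0)$ forces $k = q$, leaving the complement $\alpha^{[q-2]} \bdot \tau \bdot \tau^{p-1}\alpha^c$; in the case $(1,1)$, the complement is $\alpha^{[2q-2-k]}$, which is product-one only when $k \equiv -2 \pmod q$, i.e.\ $k = q-2$ or $k = 2q-2$. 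Since $k = 2q-2$ corresponds to the whole of $T$, both remaining possibilities reduce to verifying the single claim that $R := \alpha^{[q-2]} \bdot \tau \bdot \tau^{p-1}\alpha^c$ is \emph{not} product-one.

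The crux of the proof, and the step I expect to be the main technical obstacle, is precisely this non-product-one verification by direct computation. Every ordering of $R$ falls into one of two subtypes according to whether $\tau$ precedes or follows $\tau^{p-1}\alpha^c$; gathering the intervening $\alpha$'s and invoking $\tau^p = 1$ shows that the resulting product equals $\alpha^{-2 + a(r^{p-1}-1) + c}$ in the first subtype or $\alpha^{-2 + cr + b(r-1)}$ in the second, where $a$ (respectively $b$) denotes the number of $\alpha$'s placed between the two non-$\alpha$ terms, with $a, b \in [0, q-2]$. Substituting $c = 1 + r^{p-1}$ yields $cr = r + 1$, and the congruences for the exponent to vanish modulo $q$ both reduce to $a \equiv -1 \equiv q-1 \pmod q$ (or the analogous condition on $b$), which is incompatible with $a, b \leq q-2$. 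Hence no ordering of $R$ gives the identity, confirming that $T$ is an atom and yielding $\mathsf D(F_{pq}) \geq |T| = 2q$.
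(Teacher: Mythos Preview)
Your proof is correct and follows essentially the same strategy as the paper: construct an explicit atom of length $2q$ consisting of $2q-2$ copies of $\alpha$ together with two terms from $G\setminus G'$, verify it is product-one, and then reduce minimality to showing that the single length-$q$ sequence $R$ (the complement of $\alpha^{[q]}$) is not product-one by a direct exponent computation. The paper uses the atom $\tau^{p-1}\bdot \alpha^{[q-1]}\bdot \tau\alpha^{r+1}\bdot \alpha^{[q-1]}$, whereas you use $\alpha^{[2q-2]}\bdot \tau\bdot \tau^{p-1}\alpha^{1+r^{p-1}}$; these are essentially the same construction up to interchanging the generators $\tau$ and $\tau^{-1}$ (equivalently $r\leftrightarrow r^{-1}$), and the verification proceeds identically in both cases.
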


\begin{proof} Let $G=F_{pq}$.
Consider the  sequence $$S=\tau^{p-1}\bdot \alpha^{[q-1]}\bdot \tau\alpha^{r+1}\bdot \alpha^{[q-1]}\in \Fc(G).$$ Since $$\tau^{p-1} \alpha^{q-1} \tau\alpha^{r+1} \alpha^{q-1}=\tau^{p-1}\tau\alpha^{-r}\alpha^{r+1}\alpha^{q-1}=\tau^p\alpha^q=1,$$ we see that $S$ is a product-one sequence. We claim that $S$ is an atom, which will show $\mathsf D(G)\geq |S|=2q$, as desired. Assuming to the contrary that $S$ is not an atom, we obtain a factorization $S=T_1\bdot T_2$ with $T_1,\,T_2\in \Fc(G)$ both nontrivial, product-one sequences. Clearly, either $T_1=\alpha^{[q]}$ or $T_2=\alpha^{[q]}$, say $T_1$, and then  $T_2=\tau^{p-1}\bdot \tau\alpha^{r+1}\bdot \alpha^{[q-2]}$. Since $T_2$ has product-one, it follows in view of \cite[Lemma 2.3]{DavI} that $$1=\tau^{p-1}\alpha^x(\tau\alpha^{r+1})\alpha^{q-2-x}=\alpha^{(x+1)(r-1)},\quad\mbox{ for some $x\in [0,q-2]$}.$$ Since $\ord(\alpha)=q$ is prime, this means $x+1\equiv 0\mod q$ or $r-1\equiv 0\mod q$. The latter is ruled out by \eqref{pq-r-def} while the former is impossible in view of $x\in [0,q-2]$, yielding the desired contradiction.
\end{proof}

For the proof of Theorem \ref{thm-pq-main}, we will need to adapt the ideas from Section \ref{sec-G'} using very specific knowledge about the conjugacy  structure of $F_{pq}$. To this end, we summarize some easily verified group theoretic properties for $G=F_{pq}$:
\ber\label{fact-G'}&&G'=[G,G]=\la \alpha\ra\cong C_q\quad\und \quad \mathsf Z(G)=\{1\};\\
&&\mathsf C_G(g)=\la g\ra\quad\mbox{ for every $g\in G\setminus \{1\}$};\label{fact-C_G}\\
&& \ord(g)=q\quad\mbox{ for every $g\in G'\setminus \{1\}$}\quad\und\quad \ord(g)=p\quad\mbox{ for every $g\in G\setminus G'$}\label{fact-order};\eer
and the conjugacy classes of $G$ are given by \be \label{conj-classes} \{1\},\quad \{\alpha^x,\alpha^{xr},\alpha^{xr^2},\ldots,\alpha^{xr^{p-1}}\}\quad\mbox{ for $x\in X$},\quad \und
\quad \tau^{y}\la \alpha\ra \quad\mbox{ for $y=1,2,\ldots,p-1$},\ee where $X\subseteq [1,q-1]$ is some subset of size $|X|=\frac{q-1}{p}$. We continue with a simple lemma.

\begin{lemma}
\label{pq-lem-help1} Let  $p$ and $q$ be  primes with $p\mid q-1$,  let $G=F_{pq}$, let  $S\in \mathcal F(G'\setminus \{1\})$ and let $x\in G\setminus G'$. Then $$|\pi(x\bdot S)|\geq \min \{q,\,|x\bdot S|\}.$$
\end{lemma}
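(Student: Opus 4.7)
The plan is to exploit the fact that all terms of $S$ live in the abelian subgroup $G'=\langle\alpha\rangle$, so they commute with one another; only the position of $x$ relative to these terms matters.

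First I would write $x=\tau^y\alpha^z$ with $y\in[1,p-1]$ and $z\in\Z/q\Z$, and record the commutation identity $\alpha^k\tau^y=\tau^y\alpha^{kr^y}$ (which follows by induction from the defining relation $\alpha\tau=\tau\alpha^r$). Given any ordering of $x\bdot S$, since the terms of $S$ all commute, the product depends only on which subsequence $T\mid S$ is placed before $x$. Writing $\sigma(T)\in\Z/q\Z$ for the sum of the exponents of the terms of $T$ (viewed in $\langle\alpha\rangle\cong C_q$) and $\sigma=\sigma(S)$, a direct computation gives
\[
\alpha^{\sigma(T)}\,\tau^y\alpha^z\,\alpha^{\sigma-\sigma(T)} \;=\; \tau^y\,\alpha^{z+\sigma+\sigma(T)(r^y-1)}.
\]
Hence $\pi(x\bdot S)=\{\tau^y\alpha^{z+\sigma+\sigma(T)(r^y-1)}:T\mid S\}$, and $|\pi(x\bdot S)|$ equals the number of distinct values of $\sigma(T)(r^y-1)\pmod q$.

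Next I would use \eqref{pq-r-def}: since the multiplicative order of $r$ modulo $q$ is exactly $p$ and $y\in[1,p-1]$, we have $r^y\not\equiv 1\pmod q$, so $r^y-1$ is a unit in $\Z/q\Z$. Thus multiplication by $r^y-1$ is a bijection on $\Z/q\Z$, and
\[
|\pi(x\bdot S)|\;=\;|\Sigma^*(S)|, \qquad\text{where } \Sigma^*(S):=\{\sigma(T):T\mid S\}\subseteq\Z/q\Z.
\]

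Finally, every term of $S$ lies in $G'\setminus\{1\}$, so each contributes a nonzero element to $\Z/q\Z$. An iterated application of the Cauchy-Davenport Theorem (start with $\{0\}$ and, at each step, replace $A$ by $A\cup(A+a_i)$, which grows by at least one unless it already equals $\Z/q\Z$) yields $|\Sigma^*(S)|\geq \min\{q,|S|+1\}$. Combining, we get
\[
|\pi(x\bdot S)|\;\geq\;\min\{q,|S|+1\}\;=\;\min\{q,|x\bdot S|\},
\]
as required. There is no real obstacle: the argument is a bookkeeping calculation—the only structural input needed is that $G'$ is abelian and that the conjugation action of $\tau^y$ on $G'$ is nontrivial for $y\in[1,p-1]$, ensuring $r^y-1$ is invertible mod $q$.
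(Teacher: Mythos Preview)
Your proof is correct. It differs from the paper's in presentation, though the two are morally the same one-step-at-a-time argument.

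The paper proceeds by induction on $|S|$: removing one term $y\in\supp(S)$, one has $\pi(x\bdot S')y\cup y\,\pi(x\bdot S')\subseteq \pi(x\bdot S)$, so either the set grows by at least one, or else $\pi(x\bdot S')$ is stable under conjugation by $\la y\ra=G'$; in the latter case the orbit-size formula \eqref{orbit-size} forces $|\pi(x\bdot S')|\geq q$ because every element of $\pi(x\bdot S')$ lies outside $G'$ and hence has $G'$-orbit of full size $q$. Your argument instead writes everything out in the coordinates $(\tau,\alpha)$, identifies $\pi(x\bdot S)$ explicitly with the set of subsequence sums $\Sigma^*(S)\subseteq\Z/q\Z$ (up to an affine bijection coming from the unit $r^y-1$), and then bounds $|\Sigma^*(S)|$ by the elementary observation that $A\mapsto A+\{0,a_i\}$ strictly grows in $C_q$ until it fills everything. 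What you gain is an exact formula $|\pi(x\bdot S)|=|\Sigma^*(S)|$ rather than just the needed lower bound, and you never have to invoke the orbit--stabilizer language; what the paper's phrasing gains is that it never unpacks the presentation of $F_{pq}$, relying only on the structural facts \eqref{fact-G'}--\eqref{fact-order}, which makes it a closer template for the related Lemmas \ref{pq-lem-porbit} and \ref{lem-for-small} where explicit coordinates are less convenient.
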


\begin{proof}
We may w.l.o.g. assume $|S|\leq q-1$, for if $|S|\geq q$, then applying the lemma to any length $q-1$ subsequence of $S$ completes the proof. We need to show \be\label{firstdown}|\pi(x\bdot S)|\geq |x\bdot S|.\ee
If $S$ is the empty sequence, then \eqref{firstdown} is trivial, so we assume $|S|\geq 1$ and proceed by induction on $|S|\leq q-1$. Let $y\in \supp(S)$ and set $S'=y^{[-1]}\bdot S$. Since $S\in \mathcal F(G'\setminus \{1\})$, we have \be\label{ribble}\la y\ra =G'\cong C_q.\ee Since $x\in G\setminus G'$ and $\supp(S')\subseteq \supp(S)\subseteq G'$, it follows that \be\label{seconddown}\pi(x\bdot S')\subseteq xG'\neq G'.\ee Note that \be\label{thirddown}\pi(x\bdot S')y\cup y\pi(x\bdot S')\subseteq \pi(x\bdot S'\bdot y)=\pi(x\bdot S).\ee
By induction hypothesis, $|\pi(x\bdot S')|\geq |x\bdot S'|=|x\bdot S|-1$. Thus $|\pi(x\bdot S)|\geq |x\bdot S|$ follows from \eqref{thirddown}, completing the proof, unless $\pi(x\bdot S')y= y\pi(x\bdot S')$. However, this is equivalent to saying $$y^{-1}\pi(x\bdot S')y=\pi(x\bdot S').$$ Thus the set $\pi(x\bdot S')$ must be a union of orbits under the action of conjugation by elements from $\la y\ra=G'$ (in view of \eqref{ribble}).
In particular, the $G'$-orbit of $z$ is contained in $\pi(x\bdot S')$ for any $z\in \pi(x\bdot S')$. By \eqref{seconddown}, we have $z\in G\setminus G'$ for any such $z\in \pi(x\bdot S')$. But since $\mathsf C_G(z)\cap G'=\la z\ra\cap G'=\{1\}$, and since the size of the $G'$-orbit containing $z$ is $|G'|/(\mathsf C_G(z)\cap G')=|G'|=q$ (by \eqref{orbit-size}), it follows that $$|\pi (x\bdot S)|\geq |\pi(x\bdot S')|\geq q\geq |x\bdot S|,$$ completing the proof.
\end{proof}

The next lemma improves the bound from Lemma \ref{pq-lem-help1} under some mild restrictions and requires  a more technical argument.

\begin{lemma}
\label{pq-lem-help2} Let  $p$ and $q$ be  primes with $p\mid q-1$,  let $G=F_{pq}$, let  $S\in \mathcal F(G'\setminus \{1\})$ and let $g_1,\,g_2\in G\setminus G'$. Suppose $g_1 g_2\notin G'$. Then $$|\pi(g_1\bdot g_2\bdot S)|\geq \min \{q,\,2|S|+1\}.$$
\end{lemma}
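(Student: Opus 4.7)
The plan is to restrict attention to those orderings of $g_1\bdot g_2\bdot S$ in which $g_1$ appears before $g_2$, compute the resulting products explicitly using the relation $\alpha\tau=\tau\alpha^r$, and then apply the Cauchy--Davenport Theorem inductively. Write $g_1=\tau^{y_1}\alpha^{c_1}$ and $g_2=\tau^{y_2}\alpha^{c_2}$ with $y_1,y_2\in[1,p-1]$; the hypothesis $g_1g_2\notin G'$ means $y_3:=y_1+y_2\not\equiv 0\pmod p$, so $y_3\in[1,p-1]$. Also write $\pi(S)=\alpha^{s}$, which is well-defined since $G'=\la\alpha\ra$ is abelian.

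Any ordering with $g_1$ preceding $g_2$ has the form $S_0\bdot g_1\bdot S_1\bdot g_2\bdot S_2$, where $\{S_0,S_1,S_2\}$ is an ordered partition of the terms of $S$; set $\pi(S_i)=\alpha^{b_i}$, so $b_0+b_1+b_2\equiv s\pmod q$. Repeated use of $\alpha^x\tau^y=\tau^y\alpha^{xr^y}$ yields, after substituting $b_0=s-b_1-b_2$,
$$\pi(S_0\bdot g_1\bdot S_1\bdot g_2\bdot S_2)=\tau^{y_3}\alpha^{K+ub_1+vb_2},$$
where $K=sr^{y_3}+c_1r^{y_2}+c_2$, $u=r^{y_2}(1-r^{y_1})$, and $v=1-r^{y_3}$. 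Since $r$ has order $p$ modulo $q$ and $y_1,y_2,y_3\in[1,p-1]$, we have $u\neq 0$, $v\neq 0$, and $u-v=r^{y_2}-1\neq 0$; hence $0,u,v$ are pairwise distinct in $\mathbb F_q$, and so are $0,ua,va$ for any $a\not\equiv 0\pmod q$.

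Let $E(S)\subseteq\mathbb F_q$ denote the set of all achievable exponents $K+ub_1+vb_2$ as $(b_1,b_2)$ ranges over pairs of sums over disjoint subsets of $S$. Then $\pi(g_1\bdot g_2\bdot S)\supseteq\tau^{y_3}\alpha^{E(S)}$, so it suffices to prove $|E(S)|\geq\min\{q,2|S|+1\}$. Induct on $n=|S|$. The base case $n=0$ gives $E(\emptyset)=\{K\}$, so $|E(\emptyset)|=1$. For the inductive step, pick any $y=\alpha^a\in\supp(S)$ and set $S'=y^{[-1]}\bdot S$. Placing $y$ in block $0$, $1$, or $2$ shows
$$E(S)=E(S')\cup\big(E(S')+ua\big)\cup\big(E(S')+va\big)=E(S')+\{0,ua,va\}.$$
Since $G'\cong C_q$ with $q$ prime and $\{0,ua,va\}$ has three elements, the Cauchy--Davenport Theorem gives $|E(S)|\geq\min\{q,|E(S')|+2\}$. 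Combined with the inductive hypothesis $|E(S')|\geq\min\{q,2n-1\}$, this yields $|E(S)|\geq\min\{q,2n+1\}$.

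The only real content is the explicit product computation and the verification that $0,u,v$ are pairwise distinct in $\mathbb F_q$; both are straightforward given the presentation of $F_{pq}$. After that, the proof is just a Cauchy--Davenport induction where adding one term of $S$ expands $E(S)$ by a fixed $3$-element sumset, automatically buying us $2$ new exponents per step. I expect the main obstacle in writing it out cleanly to be choosing notation that makes the product formula transparent, since everything else follows mechanically.
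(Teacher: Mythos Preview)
Your proof is correct and follows essentially the same approach as the paper: restrict to orderings with $g_1$ before $g_2$, compute the exponent explicitly as a choice among three values per term of $S$, verify those three values are distinct in $\mathbb F_q$, and apply Cauchy--Davenport. The paper parameterizes term-by-term (each $\alpha^{y_i}$ contributes $y_i\{r^{y_3},r^{y_2},1\}$ and Cauchy--Davenport is applied once to the $\ell$-fold sumset), while you group into blocks and run the same estimate inductively; your set $\{0,ua,va\}$ is just a translate of $a\{r^{y_3},r^{y_2},1\}$, so the two computations coincide. One cosmetic point: since your constant $K$ depends on $s$ and hence on $S$, the identity should read $E(S)=\big(E(S')+ar^{y_3}\big)+\{0,ua,va\}$ rather than $E(S)=E(S')+\{0,ua,va\}$, but the extra translate is irrelevant for the cardinality bound.
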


\begin{proof}
Let $g_1=\tau^{x_1}\alpha^{a_1}$, let $g_2=\tau^{x_2}\alpha^{a_2}$, and let $S=\alpha^{y_1}\bdot\ldots\bdot \alpha^{y_\ell}$, where $\ell=|S|$.
Since $S\in \Fc(G'\setminus \{1\})$, we have \be\label{eq3} y_i\not\equiv 0\mod q\quad\mbox{for all $i\in[1,\ell]$},\ee
since $g_1,\,g_2\in G\setminus G'$, we have \be\label{eq1}x_1\not\equiv 0\mod p\quad\und\quad x_2\not\equiv 0\mod p,\ee and since $g_1g_2\notin G'$, we have \be\label{eq2}x_1+x_2\not\equiv 0\mod p.\ee Since the multiplicative order of $r$ modulo $q$ is $p$ (care of  \eqref{pq-r-def}), we deduce from  \eqref{eq1} and \eqref{eq2} that $$\{1,r^{x_2},r^{x_1+x_2}\}$$ is a set of $3$ distinct non-zero residue classes modulo $q$.

Now $$\pi(g_1\bdot g_2\bdot S)=\{\pi(T^*):\;T^*\in \Fc^*(G) \quad \und\quad [T^*]=g_1\bdot g_2\bdot S\}.$$  Every $T^*\in \Fc^*(G)$ with $[T^*]=g_1\bdot g_2\bdot S$ has the term equal to $g_1$ either preceding or following the term equal to $g_2$. Consider only those $T^*\in \Fc^*(G)$ with $[T^*]=g_1\bdot g_2\bdot S$ such that $g_1$ precedes  $g_2$. Then each term $\alpha^{y_i}$ of $S$ can either occur before $g_1$ in $T^*$, between $g_1$ and $g_2$, or after $g_2$. Furthermore, $$\pi(T^*)=\tau^{x_1+x_2}\alpha^{a_1r^{x_2}+a_2+\Sum{i=1}{\ell}y_iw_i},$$ where $w_i\in \{r^{x_1+x_2},\,r^{x_2},\,1\}$ is dependent on whether the term $\alpha^{y_i}$ of $S$ occurs before $g_1$ in $T^*$, between $g_1$ and $g_2$, or after $g_2$. Combining these thoughts,  we find that
\be\label{maybel}|\pi(g_1\bdot g_2 \bdot S)|\geq \left|\left\{\phi_{q\Z}(y):\; y\in  \Sum{i=1}{\ell}y_i\{r^{x_1+x_2},\,r^{x_2},\,1\}\right\}\right|.\ee The right hand side of \eqref{maybel} is just the number of distinct residue classes modulo $q$ contained in the integer sumset from \eqref{maybel}. We showed above that $\{r^{x_1+x_2},\,r^{x_2},\,1\}$ is a set of $3$ distinct residue classes modulo $q$, and  since \eqref{eq3} ensures that each $y_i\not\equiv 0\mod q$, it follows that each summand $y_i\{r^{x_1+x_2},\,r^{x_2},\,1\}$ in the sumset from \eqref{maybel} has size $3$ modulo the prime $q$. Thus, applying the Cauchy-Davenport Theorem to \eqref{maybel} yields $$|\pi(g_1\bdot g_2 \bdot S)|\geq \min\{q,\,2\ell+1\}.$$ Since $\ell=|S|$, the proof is now complete.
\end{proof}

The following lemma will be quite helpful.

\begin{lemma}\label{pq-lem-porbit} Let  $p$ and $q$ be  primes with $p\mid q-1$,  let $G=F_{pq}$, and let  $S\in \mathcal F(G\setminus\{1\})$. If $\la \supp(S)\ra=G$, then $$|\pi(S)|\geq \min \{p,\,|S|\}.$$
\end{lemma}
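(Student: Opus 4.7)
The plan is to split $S$ into the subsequence $S_1\in\Fc(G'\setminus\{1\})$ of its terms in $G'$ and the subsequence $S_2\in\Fc(G\setminus G')$ of the rest, so that $S = S_2\bdot S_1$. Since $\langle\supp(S)\rangle = G$ forces $|S_2|\geq 1$, the first, easy case is $|S_2| = 1$: writing $S = g\bdot S_1$ with $g\in G\setminus G'$, the subgroup $\langle g\rangle$ is cyclic of order $p$, coprime to $|G'| = q$, so $\langle S\rangle = G$ demands $\langle S_1\rangle = G'$, and in particular $|S_1|\geq 1$. Lemma \ref{pq-lem-help1} applied to $S = g\bdot S_1$ then yields $|\pi(S)|\geq \min\{q,|S|\}\geq \min\{p,|S|\}$.

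For $|S_2|\geq 2$ I would induct on $|S|$. The base case $|S|=2$ is immediate: $\langle S\rangle = G$ is non-abelian, so the two (distinct) terms of $S$ do not commute and both orderings produce distinct products, giving $|\pi(S)|=2=\min\{p,2\}$. For the inductive step $|S|\geq 3$, the crucial point is that one can always pick a term $y\in\supp(S_2)\subseteq G\setminus G'$ with $\langle\supp(y^{[-1]}\bdot S)\rangle = G$: if $|S_1|\geq 1$, then $\langle S_1\rangle = G'$ (since $G'\cong C_q$ is of prime order), and removing any $y\in S_2$ still leaves a non-$G'$ term together with $S_1$, which generates $G$; if $|S_1|=0$, then $|S_2|\geq 3$ and $\langle S\rangle = G$ non-abelian provides two non-commuting terms $g_i,g_j\in S_2$, so removing any $y\neq g_i,g_j$ preserves $[g_i,g_j]\in G'\setminus\{1\}$ in the generated subgroup and hence generation of all of $G$. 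Setting $S' = y^{[-1]}\bdot S$, the inductive hypothesis yields $|\pi(S')|\geq \min\{p,|S|-1\}$, which settles the case $|S|-1\geq p$.

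It remains to handle $|S|\leq p$, where we must upgrade $|\pi(S')|\geq|S|-1$ to $|\pi(S)|\geq|S|$. Since $\pi(S)\supseteq y\cdot\pi(S')\cup\pi(S')\cdot y$, if these two sets differ we gain the required ``$+1$''. Otherwise $\pi(S')$ is invariant under conjugation by $y$ and hence a union of $\langle y\rangle$-orbits. Because $y\in G\setminus G'$ has $|\langle y\rangle|=p$, and because $\mathsf C_G(z) = \langle z\rangle$ for every $z\in G\setminus\{1\}$ by \eqref{fact-C_G}, the orbit-size formula \eqref{orbit-size} forces every non-trivial such orbit to have size $p$ and its fixed points to lie in $\mathsf C_G(y) = \langle y\rangle$. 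Either $\pi(S')$ contains a size-$p$ orbit, whence $|\pi(S')|\geq p$ and the proof is finished, or $\pi(S')\subseteq\langle y\rangle$; but $\langle y\rangle\cap G'=\{1\}$ implies $\langle y\rangle$ meets each $G'$-coset in exactly one element, and $\pi(S')$ lies in a single $G'$-coset, so $|\pi(S')|\leq 1$, which in turn forces all terms of $\supp(S')$ to pairwise commute, contradicting $\langle\supp(S')\rangle = G$ being non-abelian.

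The main technical point is verifying that the term $y$ removed in the induction can always be chosen from $G\setminus G'$; this is exactly what lets the clean orbit analysis above produce the ``$+1$'', sidestepping the considerably more delicate sumset argument in $G'\cong C_q$ that would be needed had $y\in G'$ been forced.
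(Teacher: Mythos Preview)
Your proof is correct and takes essentially the same approach as the paper: both invoke Lemma~\ref{pq-lem-help1} for the $G'$-part and then the conjugation--orbit analysis (via \eqref{fact-C_G} and \eqref{orbit-size}) to pick up the extra element when extending by a term from $G\setminus G'$, with the final contradiction coming from $\pi(S')$ being trapped in a single $G'$-coset inside $\langle y\rangle$. The only difference is organizational---the paper builds \emph{up} via a maximal subsequence $R\mid S$ satisfying $g_0\bdot S_{G'}\mid R$ and $|\pi(R)|\geq |R|$, whereas you tear \emph{down} by induction on $|S|$---together with one minor phrasing issue: in the sub-case $|S_1|=0$, when $\supp(S_2)=\{g_i,g_j\}$ there is no term $y\neq g_i,g_j$ to remove, but since $|S_2|\geq 3$ one of them has multiplicity at least two and removing a repeated copy works just as well.
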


\begin{proof}
We may w.l.o.g. assume $|S|\leq p$, for if $|S|>p$, then applying the lemma to any length $p$
subsequence of $S$ that generates $G$ completes the proof (note any $2$ non-commuting terms generate $G$). We need to show $|\pi(S)|\geq |S|$.
Factor $S=S_{G'}\bdot S_{G\setminus G'}$ with $S_{G'}\mid S$ the subsequence consisting of
all terms from $G'$. Note, since $S\in \mathcal F(G\setminus\{1\})$, that no term of $S$ is
equal to $1$. In view of $\la \supp(S)\ra=G$, there must be some $g_0\in \supp(S)$ with
$g_0\notin G'$. From Lemma \ref{pq-lem-help1}, we have
\be\label{firstdown2}|\pi(g_0\bdot S_{G'})|\geq |g_0\bdot S_{G'}|.\ee
Let $R\mid S$ be a maximal length subsequence such that $g_0\bdot S_{G'}\mid R$ and $|\pi(R)|\geq |R|$. Note that $R$ exists in view of \eqref{firstdown2}.  If $R=S$, then the proof is complete, so assume otherwise and let $x\in \supp(S\bdot R^{[-1]})$. Since  $S_{G'}\mid R$, we have \be\label{whoof2}x\in G\setminus G'.\ee Since $g_0\bdot S_{G'}\mid R$, we could only have $|R|=1$ if $|S_{G'}|=0$ and $\supp(S)\subseteq \mathsf C_G(g_0)=\la g_0\ra$ (in view of \eqref{fact-C_G}). However, $\supp(S)\subseteq\la g_0\ra$ would contradict the hypothesis $\la \supp(S)\ra=G$. Therefore we conclude that $|R|\geq 2$. Hence \be\label{whoof}|\pi(R)|\geq |R|\geq 2.\ee

We have $\pi(R)x\cup x\pi(R)\subseteq \pi(R\bdot x)$. Thus $|\pi(R\bdot x)|\geq |R\bdot x|$ will follow, contradicting the maximality of $R$, unless $\pi(R)x= x\pi(R)$, which is equivalent to saying $$x^{-1}\pi(R)x=\pi(R).$$ In consequence, $\pi(R)$ must be a union of orbits under the action of conjugation by elements from $\la x\ra$.


Let $g\in G$ be an arbitrary element. Then $g$ is contained in a $\la x\ra$-orbit of size $|\la x\ra|/|\mathsf C_G(g)\cap \la x\ra|$ (cf. \eqref{orbit-size}). In particular, in view of \eqref{fact-C_G}, \eqref{fact-order} and \eqref{whoof2}, we see that the size of the $\la x\ra$-orbit containing $g$ is either $1$ (if $g\in \la x\ra$) or $p$ (otherwise). Thus, if $\pi(R)$ contains some element from $G\setminus \la x\ra$, then (as noted above) it will contain the full $\la x\ra$-orbit containing this element, implying that $|\pi(R\bdot x)|\geq |\pi(R)|\geq p\geq |S|\geq |R\bdot x|$, which would contradict the maximality of $R$. So we instead conclude that \be\label{forcit}\pi(R)\subseteq \la x\ra.\ee Since $x\in G\setminus G'$ (in view of \eqref{whoof2}), it is readily seen that each element of $\la x\ra$ is from a separate $G'$-coset. However, as noted in Section \ref{sec-notation}, the set $\pi(R)$ is contained in a single $G'$-coset. Thus
\eqref{forcit} ensures that $|\pi(R)|\leq |G'\cap \la x\ra|=1$, contradicting \eqref{whoof} to complete the proof.
\end{proof}

The following lemma shows that a sufficiently long sequence having a product in $G'$ must actually have a product-one subsequence.

\begin{lemma}\label{pq-lem-dG}  Let  $p$ and $q$ be  primes with $p\mid q-1$,  let $G=F_{pq}$, and let  $S\in \mathcal F(G)$. If $\pi(S)\subseteq G'$ and $|S|\geq q$, then $1\in \Pi(S)$.
\end{lemma}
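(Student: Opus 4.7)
My approach consists of three reductions followed by a main combinatorial step.

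First, if $1\in\supp(S)$ we are trivially done, so I may assume $1\notin\supp(S)$ and split $S=T\bdot U$ with $T\in\Fc(G'\setminus\{1\})$ and $U\in\Fc(G\setminus G')$. If $|T|\geq q > \dd(C_q)$, then $T\in \Fc(G')$ (viewed as a sequence in the abelian group $G'\cong C_q$) contains a nontrivial product-one subsequence and we are done. So assume $|T|\leq q-1$, whence $|U|\geq 1$; the hypothesis $\pi(S)\subseteq G'$ is equivalent to $\phi_{G'}(U)$ being a product-one sequence over $C_p$ with only nonzero terms, which forces $|U|\geq 2$.

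Next I would dispose of the case $\la\supp(S)\ra<G$. The proper subgroups of $F_{pq}$ are $\{1\}$, $G'$, and the order-$p$ cyclic subgroups $\la g\ra$ with $g\in G\setminus G'$. Since $|U|\geq 2$ we have $\supp(S)\not\subseteq G'$, so $\la\supp(S)\ra=\la g\ra$ for some $g\in G\setminus G'$; this is abelian of order $p$ and satisfies $\la g\ra\cap G'=\{1\}$. Hence $|\pi(S)|=1$ and $\pi(S)\subseteq G'\cap\la g\ra=\{1\}$, so $S$ itself is product-one.

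The main case is $\la\supp(S)\ra=G$, $|T|\leq q-1$, $|U|\geq 2$. Here I would choose a minimal-length subsequence $V\mid U$ with $\phi_{G'}(V)$ zero-sum in $C_p$, giving $2\leq |V|\leq \D(C_p)=p$ and $\pi(V\bdot T')\subseteq G'$ for every $T'\mid T$; the task is then to find $T'\mid T$ with $1\in\pi(V\bdot T')$. Repeated use of $\alpha^c\tau^y=\tau^y\alpha^{cr^y}$ converts an ordering of $V\bdot T'$ (encoded by a permutation $\sigma$ of $V$'s terms and a partition of $T'$ into $|V|+1$ bins with sums $\beta_0,\ldots,\beta_{|V|}$) into a product of the form $\alpha^{A_\sigma+\sum_j\beta_j r^{-s_j(\sigma)}}$, where $s_j(\sigma)$ is the partial $\phi_{G'}$-sum through the $j$th term of $V$. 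Minimality of $V$ forces $s_0(\sigma),\ldots,s_{|V|}(\sigma)$ to take exactly $|V|$ distinct values in $\Z/p\Z$ (with $s_0=s_{|V|}=0$ the only repetition), so the multiset $\{r^{-s_j(\sigma)}\}$ yields exactly $|V|$ distinct units in $\Z/q\Z$. The iterated Cauchy-Davenport theorem then bounds the set of achievable $\alpha$-exponents (for fixed $\sigma$, as the partition varies) below by $\min\{q,\,|T'|(|V|-1)+1\}$. Hence $T'=T$ already forces $1\in\pi(V\bdot T)$ whenever $|T|(|V|-1)\geq q-1$.

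The main obstacle is the regime where $|T|(|V|-1)<q-1$---most sharply when $|V|=2$ and $|T|\leq q-2$---so that a single choice of $\sigma$ does not suffice. There one would exploit the $|V|!$ orderings of $V$ (each giving a different translate of the same sumset in $\Z/q\Z$) and, if still short, pass to a longer minimal zero-sum, which is available whenever $\phi_{G'}(U)$ has no two-term zero-sum in $C_p$ (forcing $|V|\geq 3$); failing that, one combines several disjoint minimal zero-sums $V_1,\ldots,V_k\mid U$ via the Cauchy-Davenport theorem applied to their $G'$-valued product sets, leveraging the normality of $G'$ as recalled in Section \ref{sec-notation}. This final combinatorial step---finding some $W\mid S$ with $\phi_{G'}(W)=0$ satisfying $1\in\pi(W)$---is the crux of the proof and requires a careful case analysis in the spirit of Lemmas \ref{pq-lem-help1}--\ref{pq-lem-porbit}.
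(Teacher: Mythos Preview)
Your proposal is incomplete, and you acknowledge as much: the ``final combinatorial step'' you flag is indeed the entire content of the lemma, and your single-$V$ approach cannot reach it. The bound $\min\{q,\,|T'|(|V|-1)+1\}$ is genuinely too weak---consider the extreme case $T=\emptyset$ (all terms of $S$ lie in $G\setminus G'$), where your estimate collapses to $1$ regardless of $|V|$. Varying the ordering $\sigma$ of $V$ does not help here either, since the translate $A_\sigma$ lives in a set of size at most $|V|!\leq p!$, which is far short of $q$.

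The fix you gesture at in your last sentence---combining several disjoint minimal zero-sums $V_1,\ldots,V_k\mid U$---is not a difficult case analysis but rather the whole proof, and it should be the starting point, not a fallback. The paper's argument is exactly this: factor \emph{all} of $S$ (not just $U$) into blocks $T_1,\ldots,T_\ell$ with each $\phi_{G'}(T_i)\in\mathcal A(G/G')$. A term of $S$ from $G'$ becomes a singleton block with $|\pi(T_i)|=1=|T_i|$; a block of length $\geq 2$ lies in $G\setminus G'$, and either $\la\supp(T_i)\ra=G$, in which case Lemma~\ref{pq-lem-porbit} gives $|\pi(T_i)|\geq|T_i|$, or $\supp(T_i)$ lies in an order-$p$ subgroup, forcing $\pi(T_i)\subseteq G'\cap\la g\ra=\{1\}$ and finishing immediately. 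Assuming $1\notin\pi(T_i)$ for all $i$, one has
\[
\pi(T_1)\prod_{i=2}^{\ell}\bigl(\{1\}\cup\pi(T_i)\bigr)\subseteq\Pi(S)\cap G',
\]
and Cauchy--Davenport applied to these $G'$-subsets gives size $\geq\min\{q,\sum_i|T_i|\}=\min\{q,|S|\}=q$. Your interleaving of $T'$ inside a single $V$ mimics Lemma~\ref{pq-lem-help2} and yields a multiplicative gain on $|T'|$, but what is needed here is the additive gain $\sum_i|T_i|$ over all blocks, which comes for free once you factor the whole sequence and invoke Lemma~\ref{pq-lem-porbit} on each piece.
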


\begin{proof} By hypothesis, $\phi_{G'}(S)\in \Fc(G/G')\cong \Fc(C_p)$ is a product-one sequence. Let $S=T_1\bdot\ldots \bdot T_\ell$ be a factorization of $S$ with $\phi_{G'}(T_i)\in \mathcal A(G/G')$ for $i=1,\ldots,\ell$. Note (in view of \cite[Lemma 2.4]{DavI}) that \be\label{taffymal}1\leq |T_i|\leq \mathsf D(G/G')=\mathsf D(C_p)=p\quad\mbox{ for $i\in [1,\ell]$}.\ee  If $1\in \pi(T_i)$, then the lemma is complete in view of $\pi(T_i)\subseteq \Pi(S)$. Therefore we may assume $1\notin \pi(T_i)$ for every $i\in [1,\ell]$.

Observe that \be\label{static}\pi(T_1)\Big(\{1\}\cup \pi(T_2)\Big)\ldots \Big(\{1\}\cup \pi(T_\ell)\Big)\subseteq \Pi(S).\ee Since $1\notin \pi(T_i)$ for each $i$, we have \be\label{waqy1}|\{1\}\cup \pi(T_i)|=|\pi(T_i)|+1\quad\mbox{ for $i\in[1,\ell]$}\ee As remarked in Section \ref{sec-notation}, each $\pi(T_i)$ is contained in a single $G'$-coset, which must be $G'$ itself in view of $\phi_{G'}(T_i)\in \mathcal A(G/G')$. Thus \be\label{waqy2}\{1\}\cup \pi(T_i)\subseteq G'\quad\mbox{for $i\in [1,\ell]$}.\ee

Next, we proceed to show that   \be|\pi(T_i)|\geq |T_i|\quad\mbox{ for $i\in [1,\ell]$}.\label{waqy3}\ee Let $i\in [1,\ell]$ be arbitrary. If $\supp(T_i)\cap G'\neq \emptyset$, then $\phi_{G'}(T_i)\in \mathcal A(G/G')$ forces $|T_i|=1$, in which case \eqref{waqy3} is clear. If  $\supp(T_i)\subseteq G\setminus G'$ but $\supp(T_i)\nsubseteq H$ for every $H\leq G$ with $|H|=p$, then $\la \supp(T_i)\ra=G$. In this case, since \eqref{taffymal} ensures  $|T_i|\leq p$, Lemma \ref{pq-lem-porbit} gives \eqref{waqy3}. Finally, consider the case when  $\supp(T_i)\subseteq H$ for some $H\leq G$ with $|H|=p$.  In this case, $\pi(T_i)\subseteq H$, which combined with \eqref{waqy2} gives $\pi(T_i)\subseteq H\cap G'=\{1\}$. Hence $\pi(T_i)=\{1\}$, contrary to \eqref{waqy1}. Thus \eqref{waqy3} is established in all cases.

In view of \eqref{waqy2} and $G'=\la \alpha\ra\cong C_q$, we can apply the Cauchy-Davenport Theorem to the product-set from \eqref{static}, yielding \ber\nn|\Pi(S)\cap G'|&\geq& \min\{q,\,|\pi(T_1)|+\Sum{i=2}{\ell}|\{1\}\cup \pi(T_i)|-\ell+1\}\\&=&\min\{q,\,\Sum{i=1}{\ell}|\pi(T_i)|\}
\geq  \min\{q,\,\Sum{i=1}{\ell}|T_i|\}\label{goobl}
=\min\{q,\,|S|\}=q,\eer where the first equality is from \eqref{waqy1}, the second inequality is from \eqref{waqy3}, the second equality is from $S=T_1\bdot \ldots\bdot T_\ell$ being a a factorization of $S$, and the final equality is in view of the hypothesis $|S|\geq q$. In view of \eqref{goobl} and $|G'|=q$, it follows that $1\in G'=\Pi(S')\cap G'$, completing the proof.
\end{proof}

It is now a simple corollary to determine the small Davenport constant of $F_{pq}$, which was first achieved by Bass \cite{Ba07b}.

\begin{corollary}\label{pq-cor-small-d}
Let  $p$ and $q$ be  primes with $p\mid q-1$. Then $$\mathsf d(F_{pq})=q+p-2.$$
\end{corollary}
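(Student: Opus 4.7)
The plan is to establish the two inequalities $\mathsf d(F_{pq}) \geq p+q-2$ and $\mathsf d(F_{pq}) \leq p+q-2$ separately. For the lower bound I will simply exhibit the sequence $S = \tau^{[p-1]} \bdot \alpha^{[q-1]} \in \Fc(F_{pq})$, which has length $p+q-2$. Every nontrivial subsequence $T$ of $S$ has the form $\tau^{[a]} \bdot \alpha^{[b]}$ with $0 \leq a \leq p-1$, $0 \leq b \leq q-1$, and $(a,b) \neq (0,0)$. Since $G' = \la \alpha \ra$, every element of $\pi(T)$ lies in the single coset $\tau^a G'$; hence $1 \in \pi(T)$ forces $a \equiv 0 \pmod p$, so that $a = 0$ and $T = \alpha^{[b]}$ with $b \geq 1$. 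But then $\pi(T) = \{\alpha^b\}$ and $\alpha^b \neq 1$, since $1 \leq b \leq q-1 < \ord(\alpha) = q$.

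For the upper bound, the strategy is to reduce the problem to Lemma \ref{pq-lem-dG} by passing to the abelian quotient $G/G' \cong C_p$. Given $S \in \Fc(G)$ with $|S| \geq p+q-1$, I aim to show $1 \in \Pi(S)$. Since $\mathsf d(C_p) = p-1$, any sequence over $G/G'$ of length at least $p$ has a nontrivial product-one subsequence. Applying this definition repeatedly to $\phi_{G'}$ of the running remainder, I obtain a factorization $S = T_1 \bdot \cdots \bdot T_\ell \bdot R$ in which each $T_i$ is nontrivial with $\pi(T_i) \subseteq G'$ and $\phi_{G'}(R)$ is zero-sum free, so that $|R| \leq \mathsf d(C_p) = p-1$.

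Setting $T = T_1 \bdot \cdots \bdot T_\ell$, the fact that $G/G'$ is abelian means that for any ordering of the terms of $T$, the image under $\phi_{G'}$ of the resulting product is just the product of the identities $\phi_{G'}(\pi(T_i)) = 1$; hence $\pi(T) \subseteq G'$. Moreover, $|T| = |S| - |R| \geq (p+q-1)-(p-1) = q$, so Lemma \ref{pq-lem-dG} applies to $T$ and produces $1 \in \Pi(T) \subseteq \Pi(S)$. This yields $\mathsf d(F_{pq}) \leq p+q-2$, completing the argument.

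The essential content is already packaged inside Lemma \ref{pq-lem-dG}; once that lemma is available the reduction is routine and no genuine obstacle remains. The single point worth noting is that the bound $p+q-1$ is exactly sharp for this approach, since the length surplus $q$ left after removing the zero-sum free $\phi_{G'}$-remainder of length $p-1$ matches precisely the hypothesis threshold in Lemma \ref{pq-lem-dG}. This is what forces the answer to be $p+q-2$ and not something smaller.
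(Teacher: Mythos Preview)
Your proof is correct and follows essentially the same approach as the paper: the same sequence $\tau^{[p-1]}\bdot\alpha^{[q-1]}$ gives the lower bound, and the upper bound is reduced to Lemma~\ref{pq-lem-dG} by peeling off product-one subsequences of $\phi_{G'}(S)$ in $G/G'\cong C_p$ until at most $\mathsf d(C_p)=p-1$ terms remain, leaving a subsequence $T$ with $\pi(T)\subseteq G'$ and $|T|\geq q$. The paper phrases the peeling step via $\mathsf D(G/G')=p$ rather than $\mathsf d(G/G')=p-1$, but the arithmetic and the idea are identical.
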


\begin{proof}
The sequence $\alpha^{[q-1]}\bdot \tau^{[p-1]}\in \Fc(F_{pq})$ is readily seen  to have no nontrivial, product-one subsequence. Thus $\mathsf d(F_{pq})\geq p+q-2$. To show $\mathsf d(F_{pq})\leq p+q-2$, let $S\in \Fc(F_{pq})$ be a sequence with $|S|\geq q+p-1$. We need to show $1\in \Pi(S)$. In view of Lemma \ref{pq-lem-dG}, to show $1\in \Pi(S)$, it suffices to show $S$ has a subsequence $T\mid S$ with $|T|\geq q$ and $\pi(T)\subseteq G'$. However, this is equivalent to finding a product-one subsequence of $\phi_{G'}(S)$ having length at least $q$. Repeated application of the definition of $\mathsf D(G/G')=\mathsf D(C_p)=p$ (in view of \cite[Lemma 2.4]{DavI}) to $\phi_{G'}(S)$ gives a product-one subsequence of $\phi_{G'}(S)$ with length at least $|S|-\mathsf D(G/G')+1=|S|-p+1\geq q$, with the final inequality in view of the hypothesis $|S|\geq q+p-1$. Thus the proof is complete.
\end{proof}

If $|S|\geq \mathsf d(F_{pq})+1$, then we are guaranteed a nontrivial, product-one subsequence $T\mid S$ but know nothing about its length apart from the trivial bound $|T|\leq \mathsf d(F_{pq})+1$.  Lemma \ref{pq-lem-short-zs} shows that when $|S|$ is slightly larger than $\mathsf d(F_{pq})+1$, then we can be assured of finding a nontrivial, product-one subsequence of length at most $q$.

\begin{lemma}\label{pq-lem-short-zs}
Let  $p$ and $q$ be  primes with $p\mid q-1$,  let $G=F_{pq}$, and let  $S\in \mathcal F(G)$. If $|S|\geq q+2p-3$, then there is a nontrivial, product-one subsequence $T\mid S$ with $|T|\leq q$. In other words, $\eta(F_{pq})\leq q+2p-3$.
\end{lemma}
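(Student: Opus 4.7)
The plan is to produce a subsequence $T\t S$ with $|T|\leq q$, $\phi_{G'}(T)$ a product-one sequence in $G/G'\cong C_p$ (so $\pi(T)\subseteq G'$), and $|\pi(T)|\geq q$. These three conditions jointly force $\pi(T)=G'\ni 1$, meaning $T$ itself admits a product-one ordering, giving the desired nontrivial product-one subsequence of length $\leq q$.

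As the first move, apply $\mathsf d(C_p)=p-1$ to $\phi_{G'}(S)\in\Fc(C_p)$ repeatedly, decomposing $S=T_1\bdot\cdots\bdot T_\ell\bdot R$ with each $\phi_{G'}(T_i)\in\A(C_p)$ and $|R|\leq p-1$. Singleton atoms correspond to $0\in C_p$ and thus lie in $G'$, while longer atoms (length $\geq 2$) contain no zero term and so lie in $G\setminus G'$. If some atom $T_i$ satisfies $1\in\pi(T_i)$, then $T_i$ is already a product-one subsequence of length $\leq \mathsf D(C_p)=p\leq q$ and we are done. Otherwise every singleton atom lies in $G'\setminus\{1\}$, and every longer atom $T_i$ satisfies $\la\supp(T_i)\ra=G$ (the only proper alternative, $\supp(T_i)$ contained in a cyclic order-$p$ subgroup, would force all terms of $T_i$ to commute and yield $\pi(T_i)=\{1\}$, contradicting our assumption), whence Lemma \ref{pq-lem-porbit} gives $|\pi(T_i)|\geq |T_i|\geq 2$.

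Let $B$ be the number of singleton atoms and $c_1,\ldots,c_k\in[2,p]$ the lengths of the longer atoms, so $B+\sum_j c_j\geq|S|-|R|\geq q+p-2$. If $B\geq q$, apply $\mathsf d(G')=\mathsf d(C_q)=q-1$ to the at least $q$ terms of $S$ in $G'\setminus\{1\}$ to obtain a product-one subsequence of length $\leq q$, finishing. Otherwise $B\leq q-1$ and some longer atom $T_i$ exists. Form $T=T_i\bdot a_1\bdot\cdots\bdot a_m$ where the $a_j\in G'\setminus\{1\}$ are singleton atoms; then $\phi_{G'}(T)$ remains product-one. The key computation is an insertion-and-conjugation argument: inserting $a_j$ at the various positions of an ordering $t_1\cdots t_{|T_i|}$ of $T_i$ produces products $\pi(T_i^*)\cdot a_j^{(t_{j'+1}\cdots t_{|T_i|})}$, and since no proper subsum of the $C_p$-atom $\phi_{G'}(T_i)$ vanishes, the $\phi_{G'}$-suffix sums take $|T_i|$ distinct values in $C_p$, giving $|T_i|$ distinct conjugates of $a_j$ inside $G'\setminus\{1\}$; an application of Cauchy-Davenport in $G'\cong C_q$ then shows $|\pi(T)|$ grows to $q$ for a suitable choice $m\leq B$ with $|T|=|T_i|+m\leq q$.

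The principal obstacle is the tight interplay between the length budget $|T|\leq q$ and the requirement $m\leq B$. When $B$ is small, the bound $\sum_j c_j\geq q+p-2-B$ forces many longer atoms to exist, and the construction must combine several $T_{i_1},\ldots,T_{i_s}$ (first applying Cauchy-Davenport to $\pi(T_{i_1})\cdots\pi(T_{i_s})\subseteq G'$ to raise the base $|\pi|$ to $\geq \sum c_{i_j}-s+1$ before adjoining singletons) while still respecting $|T|\leq q$; the hypothesis $|S|\geq q+2p-3$ is precisely tight for this accounting. The smallest prime $p=2$ is the most delicate case, since each singleton then contributes only $|T_i|-1=1$ new conjugate, and handling it likely requires Vosper's theorem to sharpen Cauchy-Davenport in the near-tight regime of the dihedral group $F_{2q}$.
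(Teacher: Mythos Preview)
Your proposal has a genuine gap: the central accounting step is admitted but not carried out, and as stated it does not close. Concretely, consider the regime $B$ small (few or no terms in $G'$). You propose to combine several longer atoms $T_{i_1},\ldots,T_{i_s}$ and use Cauchy--Davenport on $\pi(T_{i_1})\cdots\pi(T_{i_s})\subseteq G'$ to obtain $|\pi|\geq \sum c_{i_j}-s+1$. But to reach $q$ under the length constraint $\sum c_{i_j}\leq q$ this forces $s\leq 1$, which is impossible since $c_i\leq p<q$. Your insertion-and-conjugation argument (which is correct and shows each singleton $a_j$ contributes a set of $c_i$ distinct conjugates) cannot by itself close the gap either: per singleton, Cauchy--Davenport only gains $c_i-1$, so with a single atom $T_i$ you need $m\geq (q-c_i)/(c_i-1)$ singletons, which can exceed $B$. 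You flag this as the ``principal obstacle'' and gesture at Vosper's Theorem for $p=2$, but no argument is supplied; the claim that $|S|\geq q+2p-3$ is ``precisely tight for this accounting'' is asserted, not verified.

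The paper's proof avoids this difficulty entirely by a different decomposition. It splits on the maximum multiplicity $\mathsf h(\phi_{G'}(S))$. When this multiplicity is not too large (Case 1), the paper invokes \cite[Theorem 4.2]{DavI} (the DeVos--Goddyn--Mohar Theorem) to show $\Pi_q(\phi_{G'}(S))=G/G'$, which immediately produces a subsequence $T'\t S$ of length exactly $q$ with $\pi(T')\subseteq G'$; then Lemma \ref{pq-lem-dG} (whose proof \emph{is} the atom-decomposition-plus-Cauchy--Davenport argument you outline, but applied to all of $T'$ without a length budget) finishes. When the multiplicity is large (Case 2), almost all terms lie in a single coset $\tau^xG'$; a setpartition into $p$ blocks and Cauchy--Davenport yield a product-one subsequence of length $p$. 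The key tool you are missing is the first step: something of DeVos--Goddyn--Mohar strength to extract a length-$q$ subsequence with product in $G'$ when the terms are spread across many $G'$-cosets, rather than trying to assemble atoms directly under the length constraint.
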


\begin{proof}
We handle two cases.

\subsection*{Case 1:} $\mathsf h\big(\phi_{G'}(S)\big)\leq |S|-p+1$.

We aim to show that there exists a subsequence $T'\mid S$ with
\be\label{ohyeah} |T'|=q\quad\und \quad \pi(T')\subseteq G'.\ee Once \eqref{ohyeah} is established, we can apply Lemma \ref{pq-lem-dG} to $T'$ to find a nontrivial, product-one subsequence $T\mid T'$ with $|T|\leq |T'|=q$, as desired. Thus it remains to establish \eqref{ohyeah} to complete Case 1. If $\Pi_q\big(\phi_{G'}(S)\big)=G/G'$, then \eqref{ohyeah} readily follows, completing the case. Therefore, we can assume otherwise: \be \label{zuletide}\Pi_q\big(\phi_{G'}(S)\big)\neq G/G'.\ee Thus, since $G/G'\cong C_p$ with $p$ prime, it follows that  $\mathsf H\Big(\Pi_q\big(\phi_{G'}(S)\big)\Big)=\{1\}$. Consequently,  applying \cite[Theorem 4.2]{DavI} to $\Pi_q\big(\phi_{G'}(S)\big)$ yields
\be\label{victress}|\Pi_q\big(\phi_{G'}(S)\big)|\geq \Summ{g\in G/G'}\min\{q,\vp_g\big(\phi_{G'}(S)\big)\}-q+1,\ee

If $\vp_g\big(\phi_{G'}(S)\big)\leq q$ for all $g\in G/G'$, then \eqref{victress} together with the hypothesis $|S|\geq q+2p-3$ yields $|\Pi_q\big(\phi_{G'}(S)\big)|\geq |S|-q+1\geq 2p-2\geq p$. If $\vp_g\big(\phi_{G'}(S)\big)> q$ holds for precisely one $g\in G/G'$, then \eqref{victress} together with the case hypothesis yields $|\Pi_q\big(\phi_{G'}(S)\big)|\geq (q+p-1)-q+1=p$. Finally, if $\vp_g\big(\phi_{G'}(S)\big)> q$ holds for more than one $g\in G/G'$, then \eqref{victress} yields
$|\Pi_q\big(\phi_{G'}(S)\big)|\geq 2q-q+1= q+1\geq p$. In all cases, we find that $|\Pi_q\big(\phi_{G'}(S)\big)|\geq p=|G/G'|$, which contradicts \eqref{zuletide}, completing Case 1.

\subsection*{Case 2:} $\mathsf h\big(\phi_{G'}(S)\big)\geq |S|-p+2$.

If there were at least $q$ terms of $S$ from $G'\cong C_q$, then there would be a nontrivial, product-one sequence of length at most $\mathsf d(G')+1=q$ (care of \cite[Lemma 2.4.4]{DavI}), as desired. Therefore we may assume there are at most $q-1$ terms of $S$ from $G'$.
Thus, since $|S|-p+2\geq q+p-1\geq q$, we see that the case hypothesis implies that there exists a $G'$-coset $\tau^xG'$ with $x\in [1,p-1]$ such that $\vp_{\tau^xG'}(S)\geq |S|-p+2.$ Let $S_{\tau^xG'}\mid S$ be the subsequence of all terms from $\tau^xG'$, so \be\label{justceat1}|S_{\tau^xG'}|=\vp_{\tau^xG'}(S)\geq |S|-p+2\geq q+p-1.\ee Since $x\in [1,p-1]$, each element $g\in \tau^xG'$ has $\ord(g)=p$ (care of \eqref{fact-order}). In consequence, we have \be\label{justceat2}\mathsf h(S_{\tau^xG'})\leq p-1,\ee as otherwise a subsequence of $S_{\tau^xG'}$ consisting of the same term repeated $p\leq q$ times would give the desired product-one subsequence. Since $|S_{\tau^xG'}|\geq q+p-1\geq p$, it follows from \cite[Lemma 2.6]{DavI} and \eqref{justceat2}  that there exist nonempty subsets $A_1,\ldots,A_p\subseteq G'$ with $(\tau^xA_1)\bdot\ldots\bdot (\tau^xA_p)$ a setpartition of $S_{\tau^xG}$. In particular, if $1\in (\tau^xA_1)\ldots (\tau^xA_p)$, then $S_{\tau^xG'}$ will have a product-one subsequence of length $p\leq q$, completing the proof. Thus it remains to show $1\in (\tau^xA_1)\ldots (\tau^xA_p)$ to complete the proof.

Since each $A_i\subseteq G'$ with the commutator subgroup $G'$ normal in $G$, it follows that
\be\label{hipturn}(\tau^xA_1)\ldots (\tau^xA_p)=(\tau^x)^pA'_1\ldots A'_p=A'_1\ldots A'_p\subseteq G'\ee for some subsets $A'_i\subseteq G'$ with $|A_i|=|A'_i|$ for all $i\in [1,p]$. Thus, since $G'\cong C_q$ with $q$ prime, we can invoke the Cauchy-Davenport Theorem, recall that $(\tau^xA_1)\bdot\ldots\bdot (\tau^xA_p)$ a setpartition of $S_{\tau^xG}$, and then use \eqref{justceat1}  to obtain \ber\nn|A'_1\ldots A'_p|&\geq&
\min\{q,\,\Sum{i=1}{p}|A'_i|-p+1\}=\min\{q,\,\Sum{i=1}{p}|\tau^xA_i|-p+1\}\\&=&
\min\{q,\,|S_{\tau^xG'}|-p+1\}= q=|G'|.\nn\eer As a result, the inclusion in \eqref{hipturn} must be an equality, which implies $1\in G'=(\tau^xA_1)\ldots (\tau^xA_p)$, completing the proof as mentioned above.
\end{proof}

Next, we show that a counter-example to Theorem \ref{thm-pq-main} cannot have many terms from $G'=\la \alpha\ra$.

\begin{lemma} \label{pq-lem-not-many-alpha}Let  $p$ and $q$ be  odd primes with $p\mid q-1$,  let $G=F_{pq}$, and let  $S\in \mathcal A(G)$. If $|S|\geq 2q+1$, then $$\vp_{G'}(S)=\Summ{g\in G'}\vp_g(S)\leq \frac{q-3}{2}.$$
\end{lemma}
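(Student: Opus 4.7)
The plan is to argue by contradiction. Suppose $k := \vp_{G'}(S) \geq (q-1)/2$; I will produce a nontrivial factorization $S = V_0 \bdot W_0$ with both parts product-one, contradicting $S \in \mathcal A(G)$.

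Write $T := S_{G'}$ and $U := S\bdot T^{[-1]}$, so $|T|=k$ and $|U|=|S|-k$. Since $S\in \mathcal A(G)$ with $|S|\geq 2q+1>q\geq \mathsf D(G')$, no term of $S$ equals $1$, so $T\in \Fc(G'\setminus\{1\})$; moreover $\la\supp(S)\ra=G$, for otherwise $S$ would lie in a proper subgroup of $F_{pq}$, which is cyclic of order at most $q$, contradicting $|S|\geq 2q+1$. In particular $U\neq\emptyset$, and since $y(U)\equiv 0\pmod p$ with all $\tau$-exponents in $U$ nonzero mod $p$, actually $|U|\geq 2$.

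The heart of the argument is to find two entries $g_1, g_2$ of $U$ whose product lies outside $G'$, i.e., with $\tau$-exponents satisfying $x_1+x_2\not\equiv 0\pmod p$. If no such pair existed, every pair of $\tau$-exponents in $U$ would satisfy $x_i+x_j\equiv 0$, forcing all of them to equal a common value $c$ with $2c\equiv 0\pmod p$; since $p$ is odd, this yields $c\equiv 0$, contradicting $c\in[1,p-1]$. With $g_1,g_2$ in hand, I apply Lemma \ref{pq-lem-help2} to $V := g_1\bdot g_2\bdot T$ to obtain
\[
|\pi(V)|\geq \min\{q,\, 2k+1\}=q,
\]
using $k\geq (q-1)/2$. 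Since $\pi(V)\subseteq \tau^{x_1+x_2}G'$, a coset of order $q$, this forces $\pi(V)=\tau^{x_1+x_2}G'$, the entire non-identity $G'$-coset.

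The closing step is to convert this flexibility into a factorization. Because $\pi(V)$ fills a whole coset, for any sub-sequence $W_0$ of the complement $R := U\bdot g_1^{[-1]}\bdot g_2^{[-1]}$ whose $\phi_{G'}$-sum equals $-(x_1+x_2)\pmod p$ (so that $\pi(W_0)\subseteq \tau^{-(x_1+x_2)}G'$), we have $\pi(V)\pi(W_0)= G'\ni 1$, making $V\bdot W_0$ a product-one subsequence of $S$. I plan to choose $W_0$ so that the complementary sub-sequence $R\bdot W_0^{[-1]}$ is \emph{also} product-one, giving the sought factorization $S=(V\bdot W_0)\bdot (R\bdot W_0^{[-1]})$.

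The main technical obstacle is ensuring this second product-one requirement. The anticipated strategy is to exploit the large size of $|R|$ (which is at least $q$ once one notes $k\leq q-1$ by zero-sum-freeness of $T$) together with Lemmas \ref{pq-lem-short-zs} and \ref{pq-lem-dG} to control the short product-one subsequences available in $R$; a case distinction on whether $\phi_{G'}(U)$ is supported on a single $\tau$-exponent or on multiple exponents will likely be required, with the single-support case handled separately using the explicit formula $(\tau^a\alpha^b)^p=1$ when $p\mid a\cdot(\text{length})$, which forces $U$ itself to be product-one and thereby reduces the problem to $T$ being zero-sum.
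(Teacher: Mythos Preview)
Your setup through the application of Lemma~\ref{pq-lem-help2} is essentially the paper's argument: assume $\vp_{G'}(S)\geq \frac{q-1}{2}$, locate $g_1,g_2\in G\setminus G'$ with $g_1g_2\notin G'$, and use Lemma~\ref{pq-lem-help2} to make $\pi(V)$ a full $G'$-coset. Two points, however, need attention.

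\textbf{A minor gap.} Your argument for the existence of $g_1,g_2$ implicitly uses three terms from $U$ (from $x_i+x_j\equiv 0$ for all pairs you deduce $2c\equiv 0$), but you only established $|U|\geq 2$. When $|U|=2$ you have $x_1+x_2\equiv 0$ and no contradiction. The paper handles $\vp_{G\setminus G'}(S)\leq 2$ separately: with at least $2q-1$ terms of $S$ in $G'$, a pigeonhole argument on an ordering of $S$ produces $q$ consecutive terms in $G'$, and then \cite[Lemma~2.4.1]{DavI} gives a short product-one consecutive subsequence, contradicting atomicity.

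\textbf{The real gap is in the closing step.} You take $T=S_{G'}$ in full, so $|T|=k$ and $|R|=|S|-k-2$. You then need a nontrivial product-one subsequence of $R$, but for $k$ near its upper range (say $k>q-p$) you have $|R|<\mathsf d(G)+1$, and neither Lemma~\ref{pq-lem-short-zs} (which needs $|R|\geq q+2p-3$) nor Lemma~\ref{pq-lem-dG} (which needs $\pi(R)\subseteq G'$, false since $\phi_{G'}(R)\equiv -(x_1+x_2)\not\equiv 0$) applies. Your claim that $k\leq q-1$ ``by zero-sum-freeness of $T$'' is also unjustified: a product-one subsequence of $T$ does not by itself factor the atom $S$. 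Finally, in the single-exponent subcase you assert $U$ is product-one and reduce to $T$ being zero-sum, but $\pi(U)\subseteq G'$ does not give $1\in\pi(U)$, and even if it did, nothing forces $1\in\pi(T)$.

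The paper's remedy is clean and avoids all of this: take $T$ of length \emph{exactly} $\frac{q-1}{2}$ rather than all of $S_{G'}$. Then $|\pi(V)|\geq q$ still holds, while the complement $S\bdot(T\bdot g_1\bdot g_2)^{[-1]}$ has at least $\frac{3q-1}{2}\geq q+p=\mathsf d(G)+2$ terms (using $q\geq 2p+1$). Now the definition of $\mathsf d(G)$ yields a nontrivial product-one subsequence $R'$ there; since $V\mid S\bdot {R'}^{[-1]}$, the set $\pi(S\bdot {R'}^{[-1]})$ is a full $G'$-coset, and \cite[Lemma~2.2]{DavI} forces that coset to be $G'$ itself, so $S=R'\bdot(S\bdot {R'}^{[-1]})$ is the desired factorization.
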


\begin{proof}
Since $S$ has product-one, let $S^*\in \Fc^*(G)$ be an ordering of $S$, so $[S^*]=S$, with $\pi(S^*)=1$. If $\supp(S)\subseteq G'$, then $|S|\leq \mathsf D(G')=\mathsf D(\la \alpha\ra)=q$ (in view of $S\in \mathcal A(G)$ and \cite[Lemma 2.4.4]{DavI}), contradicting that $|S|\geq 2q+1$. Thus $S$ must have a term from $G\setminus G'$, and in view of \cite[Lemma 2.3]{DavI}, we can assume the first term of $S^*$ is from $G\setminus G'$.

Suppose $\vp_{G\setminus G'}(S)\leq 2$. Then there will be at least $|S|-2\geq 2q-1$ terms of $S$ from $G'$. But now, since the first term of $S^*$ is from $G\setminus G'$,  the pigeonhole principle guarantees that there is a consecutive subsequence $T^*\mid S^*$ with $|T^*|\geq q=|G'|$ and $\supp(T^*)\subseteq G'$. Applying \cite[Lemma 2.4.1]{DavI} to $T^*\in \Fc(G')$, we obtain a nontrivial, product-one consecutive subsequence in $S^*$ of length at most $q<|S|$, which contradicts
\cite[Lemma 2.1]{DavI}. So we instead conclude that \be\label{lotsoftau}\vp_{G\setminus G'}(S)\geq 3.\ee

We claim that \eqref{lotsoftau} implies there is a subsequence $g_1\bdot g_2\mid S$ with \be\label{stitic}g_1,\,g_2\in G\setminus G'\quad\und\quad g_1g_2\notin G'.\ee To see this, in view of \eqref{lotsoftau}, let $x,y,z\in \supp(S)$ be terms with $x,\,y,\,z\in G\setminus G'$ and $x\bdot y\bdot z\mid S$ and assume by contradiction that
 $xy,\,xz,\,yz\in G'$. Then $\phi_{G'}(x)\phi_{G'}(y)=\phi_{G'}(x)\phi_{G'}(z)=1$, implying $\phi_{G'}(y)=\phi_{G'}(z)$. But now $yz\in G'$ implies  $1=\phi_{G'}(y)\phi_{G'}(z)=\phi_{G'}(y)^2$, so that $\ord(\phi_{G'}(y))\mid 2$. However, since $G/G'\cong C_p$ with $p$ odd by hypothesis, $\ord(\phi_{G'}(y))$ cannot be even, forcing $\ord(\phi_{G'}(y))=1$. Thus $y\in G'$, contrary to its definition. This establishes \eqref{stitic}, as claimed.

Assume by contradiction that $$\vp_{G'}(S)=\Summ{g\in G'}\vp_g(S)\geq \frac{q-1}{2}$$ and let $T\mid S$ be a subsequence with $\supp(T)\subseteq G'$ and $|T|= \frac{q-1}{2}$. Since $S$ is an atom of length $|S|\geq 2q+1\geq 2$, we have $\supp(T)\subseteq \supp(S)\subseteq G\setminus \{1\}$. Thus we can apply Lemma \ref{pq-lem-help2} using the sequence $g_1\bdot g_2\bdot T$ and thereby find that \be\label{longlong}|\pi(g_1\bdot g_2\bdot T)|\geq \min\{q,\,2|T|+1\}=q,\ee where the final inequality follows in view of $|T|= \frac{q-1}{2}$.

Since $|S|\geq 2q+1$ and $|T|= \frac{q-1}{2}$, it follows that  \be\label{grilltell}|S\bdot (T\bdot g_1\bdot g_2)^{[-1]}|=|S|-|T|-2\geq \frac{3q-1}{2}.\ee Since $p\mid q-1$ with $p$ and $q$ odd, we have $q\geq 2p+1$. Combining this with \eqref{grilltell} yields $$|S\bdot (T\bdot g_1\bdot g_2)^{[-1]}|\geq q+\frac{q-1}{2}\geq q+p=\mathsf d(G)+2,$$ where the final inequality follows from Corollary \ref{pq-cor-small-d}. Thus applying the definition of $\mathsf d(G)$ to the sequence $S\bdot (T\bdot g_1\bdot g_2)^{[-1]}$, we find a nontrivial, product-one subsequence $R\mid S$ such that $T\bdot g_1\bdot g_2\mid R^{[-1]}\bdot S$. As noted in Section \ref{sec-notation}, $\pi(R^{[-1]}\bdot S)$ is contained in a $G'$-coset. By \cite[Lemma 2.2]{DavI}, this $G'$-coset is actually the subgroup $G'$ itself.
Moreover, in view of $T\bdot g_1\bdot g_2\mid R^{[-1]}\bdot S$ and \eqref{longlong}, we see that, in fact, $\pi(R^{[-1]}\bdot S)=G'$. In particular, $1\in G'\in \pi(R^{[-1]}\bdot S)$.  As a result, $S=R\bdot (R^{[-1]}\bdot S)$ is a factorization of $S$ into $2$ nontrivial, product-one subsequences, contradicting that $S\in \mathcal A(G)$ is an atom and completing the proof.
\end{proof}

As we will see in the proof, the following lemma is essentially just a consequence of the fact that a set in $\mathbb F_q$ having multiplicative stabilizer of size at least $3$ cannot be an arithmetic progression apart from trivial extremes for its cardinality. Note, since $A\setminus \{0\}$ is a disjoint  union of sets of size $p\geq 3$ (in view of the sets from Lemma \ref{pq-lem-not-arith} being orbits under the multiplication by $r$ action), that the hypothesis $2\leq |A|\leq q-2$ in Lemma \ref{pq-lem-not-arith} actually implies $3\leq p\leq |A|\leq q-p\leq q-3$.

\begin{lemma}\label{pq-lem-not-arith}Let  $p$ and $q$ be  odd primes with $p\mid q-1$,  let $r\in \mathbb F_q^\times$ be an element of multiplicative order $p$, and let $A\subseteq \mathbb  F_q$ be a subset which is a union of sets of the form
\be\label{add-conj}\{0\}\quad \und\quad g\{1,r,r^2,\ldots,r^{p-1}\}\quad\mbox{ for $g\in \mathbb F_q\setminus \{0\}$}.\ee
If $2\leq |A|\leq q-2$, then $A$ is not an arithmetic progression.
\end{lemma}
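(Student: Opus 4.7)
The plan is to exploit the multiplicative symmetry of $A$. First, I would observe that $A$ is invariant under multiplication by $r$: each defining block $g\{1,r,r^2,\ldots,r^{p-1}\}$ is precisely the $\la r\ra$-orbit of $g$ in $\F_q^\times$ (so multiplication by $r$ cyclically permutes it), and $\{0\}$ is trivially fixed. Hence $rA=A$, and the multiplicative stabilizer $\mathrm{Stab}(A):=\{c\in\F_q^\times:\; cA=A\}$ contains $\la r\ra$, a cyclic subgroup of order $p\geq 3$. The entire proof will then reduce to showing that an arithmetic progression $A\subseteq \F_q$ of length $k$ with $2\leq k\leq q-2$ cannot have multiplicative stabilizer larger than $\{\pm 1\}$; this contradicts $|\la r\ra|=p\geq 3$, with $p$ odd ruling out $r=-1$ of order $2$.

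Now, assuming for contradiction that $A=a+d\{0,1,\ldots,k-1\}$ is an AP of length $k=|A|$ with common difference $d\in \F_q^\times$, one computes $rA=ra+rd\{0,1,\ldots,k-1\}$, which is again an AP, this time with common difference $rd$. Thus $A=rA$ is simultaneously an AP with common differences $d$ and $rd$. The key auxiliary claim I would state and prove is: \emph{for $2\leq k\leq q-2$, the common difference of an AP of length $k$ in $\F_q$ is unique up to sign}. Granting this, $rd=\pm d$, so $r=\pm 1$, contradicting $\ord(r)=p\geq 3$.

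The hard part is this uniqueness claim. If $A=a+d\{0,\ldots,k-1\}=a'+d'\{0,\ldots,k-1\}$, then setting $\lambda=d'/d$ reduces the claim to showing that $P:=\lambda\{0,1,\ldots,k-1\}$ is a translate of $\{0,1,\ldots,k-1\}$ only when $\lambda=\pm 1$. Now $P$ is a translate of $\{0,\ldots,k-1\}$ if and only if $P$ consists of $k$ consecutive residues in $\F_q$, which (for $k<q$) is equivalent to $|P\cap(P+1)|=k-1$. A direct count yields $|P\cap(P+1)|=|\{(i,j)\in\{0,\ldots,k-1\}^2:\;(i-j)\lambda=1\}|$. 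Writing $\mu\in[1,q-1]$ for the canonical integer representative of $\lambda^{-1}$, I would perform a short case analysis: the count equals $k-\mu$ for $\mu\in[1,q-k]$, equals $k-(q-\mu)$ for $\mu\in[k,q-1]$, equals $2k-q$ in the overlap $[q-k+1,k-1]$ (present when $k\geq(q+2)/2$), and equals $0$ in the gap $[k,q-k]$ (present when $k\leq q/2$). Setting this count equal to $k-1$ and using $2\leq k\leq q-2$ then forces $\mu\in\{1,q-1\}$, i.e., $\lambda=\pm 1$, completing the proof.
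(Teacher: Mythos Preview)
Your argument is correct and follows essentially the same route as the paper: establish $rA=A$ from the block structure, observe that if $A$ were an arithmetic progression with difference $d$ then $rA$ would be one with difference $rd$, and then invoke uniqueness of the common difference up to sign (for $2\le |A|\le q-2$) to force $r=\pm 1$, contradicting $\ord(r)=p\ge 3$. The only difference is that the paper simply asserts this uniqueness as ``well-known (and easily shown)'', whereas you supply an explicit proof via the intersection count $|P\cap(P+1)|$; your case analysis there is sound and correctly isolates $\mu\in\{1,q-1\}$.
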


\begin{proof} Since $p$ and $q$ are odd primes, we have $p,\,q\geq 3$. Thus,
since $r\in \mathbb F_q^\times$ has multiplicative order $p\geq 3$, we see that \be\label{dancerance}r\notin\{-1,0,1\}\quad\mbox{ with }\quad r^p=1.\ee  Let $P=\{1,r,r^2,\ldots,r^{p-1}\}$ and note that $rP=P$ in view of $r^p=1$. Now $r\{0\}=\{0\}$ and $rgP=grP=gP$ for all $g\in \mathbb F_q\setminus \{0\}$. Thus $A$ is a union of sets which are stable under multiplication by $r$, which implies that $A$ is stable under multiplication by $r$: $$A=rA.$$


Assume by contradiction that $A$ is an arithmetic progression, so $A=\{a,a+d,\ldots,a+\ell d\}$ for some $a\in \mathbb F_q$ and $d\in \mathbb F_q^\times$, where $\ell=|A|-1$. Then $A=rA=\{ra,ra+rd,\ldots,ra+\ell rd\}$ is also an arithmetic progression with difference $rd\in \mathbb F_q^\times$. However, since $2\leq |A|\leq q-2$, it is well-known (and easily shown) that the difference $d$ of the arithmetic progression $A$ is unique up to sign. Hence $rd=\pm d$, implying $r\in\{-1,1\}$, contrary to \eqref{dancerance}.
\end{proof}

The following lemma will be used in conjunction with Lemma \ref{lem-algorithm}.

\begin{lemma}\label{pq-lem-alg-help}Let  $p$ and $q$ be  odd primes with $p\mid q-1$,  let $G=F_{pq}$, and let $T_1,\ldots,T_r\in \Fc(G)$ be sequences for which \eqref{alg-conc-2} holds.
Then the following hold.
\begin{itemize}
\item[(i)] $|\pi(T_1)\ldots \pi(T_r)|\geq \min\{q-1,\,\Sum{i=1}{r}|\pi(T_i)|\}\geq \min\{q-1,\,\Sum{i=1}{r}|T_i|\}.$
\item[(ii)] If
 $\Sum{i=1}{r}|T_i|\geq q+1$, then $|\pi(T_1)\ldots \pi(T_r)|=q.$
\end{itemize}
\end{lemma}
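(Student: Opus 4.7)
The plan is to translate the product $\pi(T_1)\ldots\pi(T_r)$ into a sumset in $\F_q\cong G'$ and beat the generic Cauchy--Davenport bound by $1$ per added summand, using Vosper's Theorem in combination with Lemma \ref{pq-lem-not-arith}.

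As in Section \ref{sec-notation}, write $\pi(T_i)=A_ig_i$ with $A_i\subseteq G'$ and $g_i\in G$, so that $\pi(T_1)\ldots\pi(T_r)=A'_1A'_2\ldots A'_r g$ with $|A'_i|=|A_i|=|\pi(T_i)|$. Identifying $G'=\la\alpha\ra$ additively with $\F_q$ (so conjugation by $\tau$ acts as multiplication by $r$), the key structural observation is that for $i\in[1,r-1]$ the hypothesis $\pi(T_i)^G=\pi(T_i)$ lets me choose $A_i$ to be $r$-invariant: if $\pi(T_i)\subseteq G'$, then $A_i=\pi(T_i)$ is a union of $G$-conjugacy classes contained in $G'$ and hence (by \eqref{conj-classes}) a union of sets of the form \eqref{add-conj}; and if $\pi(T_i)$ sits in a nontrivial coset $\tau^yG'$, then \eqref{conj-classes} forces $\pi(T_i)=\tau^yG'$, so I may take $A_i=G'$. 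With this choice $A'_i=A_i$ for $i\leq r-1$ (conjugation acts on $G'$ as multiplication by some power of $r$, which fixes $r$-invariant sets), so each partial sum $B_j:=A'_1+\ldots+A'_j$ for $j\in[1,r-1]$ is itself $r$-invariant and thus falls within the scope of Lemma \ref{pq-lem-not-arith}. Since $|B_r|=|\pi(T_1)\ldots\pi(T_r)|$, bounding $|B_r|$ is the goal.

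For (i), I would induct on $j\in[1,r]$ to show $|B_j|\geq\min\{q-1,\,\sum_{i=1}^j|A'_i|\}$. The base $j=1$ is trivial. For the step, if $|B_{j-1}|\geq q-1$ then $|B_j|\geq|B_{j-1}|\geq q-1$ suffices; otherwise $2\leq|B_{j-1}|\leq q-2$ (using $|B_{j-1}|\geq|A'_1|\geq 2$), so Lemma \ref{pq-lem-not-arith} forces $B_{j-1}$ to not be an arithmetic progression. Since $|A'_j|\geq 2$ as well, Vosper's Theorem then yields $|B_j|\geq\min\{q-1,|B_{j-1}|+|A'_j|\}$ (otherwise $B_{j-1}$ would be an AP, contradicting the above). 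Combined with $|B_{j-1}|\geq\sum_{i=1}^{j-1}|A'_i|$ from the inductive hypothesis in this subcase, the step is complete. The second inequality in (i) is immediate from \eqref{alg-conc-2}.

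For (ii), assume $\sum_{i=1}^r|T_i|\geq q+1$ and apply (i) to $B_{r-1}$. In the case $|B_{r-1}|\geq q-1$, any two distinct elements $a_1,a_2\in A'_r$ (which exist since $|A'_r|\geq 2$) yield translates $B_{r-1}+a_1$ and $B_{r-1}+a_2$ each missing at most one element of $\F_q$, and those missing elements are distinct, so the union of the two translates is $\F_q$, forcing $B_r=\F_q$. In the case $|B_{r-1}|\leq q-2$, part (i) gives $|B_{r-1}|\geq\sum_{i=1}^{r-1}|A'_i|$, so $|B_{r-1}|+|A'_r|\geq\sum_{i=1}^r|T_i|\geq q+1$, and the Cauchy--Davenport Theorem yields $|B_r|\geq q$. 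The main obstacle throughout is arranging the $r$-invariance of the $B_j$ correctly so that Lemma \ref{pq-lem-not-arith} applies to prevent the customary $-1$ loss per step in Cauchy--Davenport; once that is set up, the argument involves no heavy computation.
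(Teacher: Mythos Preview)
Your proposal is correct and follows essentially the same approach as the paper. Both arguments reduce to the case $\pi(T_i)\subseteq G'$ for $i\le r-1$ (the full-coset case giving $q$ immediately), observe that the partial products $\pi(T_1)\ldots\pi(T_j)$ for $j\le r-1$ are conjugation-invariant and hence, by Lemma~\ref{pq-lem-not-arith}, not arithmetic progressions in the range $[2,q-2]$, and then iterate Vosper's Theorem to gain the extra $+1$ over Cauchy--Davenport at each step. Your translation through the $A'_i$ of Section~\ref{sec-notation} is just a notational variant of the paper's direct argument that a product of $G$-invariant sets is $G$-invariant.

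One small omission: in part (ii) you invoke $B_{r-1}$, which tacitly assumes $r\ge 2$. The paper disposes of $r=1$ in one line by noting that $|\pi(T_1)|\ge|T_1|\ge q+1>|G'|$ is impossible; you should add this. Also, your ad hoc union-of-translates argument in the case $|B_{r-1}|\ge q-1$ is fine but could simply be replaced by Cauchy--Davenport, as the paper does.
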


\begin{proof}  Consider an arbitrary  $j\in [1,r-1]$. Then $\pi(T_i)^G=\pi(T_i)$ for all $i\in [1,j]$, which means that each $\pi(T_i)$, for $i\in [1,j]$, is a union of $G$-orbits. It easily seen that this property is preserved by taking product-sets: Indeed, given any $x,\,y,\,g\in G$, we have  $g^{-1}xyg=g^{-1}xgy'=x'y'$ for some  $y'\in y^G$ and $x'\in x^G$, which shows that the product-set of two orbits is stable under conjugation. Consequently, $$(\pi(T_1)\ldots\pi(T_j))^G=\pi(T_1)\ldots\pi(T_j).$$ Thus the product-set $\pi(T_1)\ldots\pi(T_j)$,  for $j\in [1,r-1]$, is also a union of  $G$-orbits.
Since $\mathsf Z(G)=\{1\}$ and $|\pi(T_j)|\geq |T_j|\geq 2$, there can be at most one orbit of size $1$ contained in $\pi(T_j)$, and so there is at least one orbit of size greater than $1$ in $\pi(T_j)$, which must have  size either  $p$ or $q$. If size $q$ occurs, then we trivially have $|\pi(T_1)\ldots \pi(T_r)|\geq |\pi(T_j)|\geq q$,
 as desired. So we instead conclude that each $\pi(T_j)$, for $j\in [1,r-1]$, is a union of $G$-orbits of size $p$ possibly union $\{1\}$. Likewise, $\pi(T_1)\ldots\pi(T_j)$, for $j\in [1,r-1]$, is also a union of  $G$-orbits of size $p$ possibly union $\{1\}$. In particular,
we have $$\pi(T_i)\subseteq G'\cong C_q\quad\mbox{ for all $i\in [1,r-1]$}.$$ Thus, since $\pi(T_r)$ is contained in a $G'$-coset (as remarked in Section \ref{sec-notation}),  the Cauchy-Davenport Theorem and Vosper's Theorem can be used to estimate the product-set $\pi(T_1)\ldots \pi(T_r)$.

Let us next deduce (ii) from (i). To this end, suppose $\Sum{i=1}{r}|T_i|\geq q+1$. If $r=1$, then we have $|\pi(T_1)\ldots \pi(T_r)|=|\pi(T_1)|\geq |T_1|=\Sum{i=1}{r}|T_i|\geq q+1=|G'|+1$, which is impossible. Thus $r\geq 2$. Applying (i) to $\pi(T_1)\ldots \pi(T_{r-1})$, we find that $$|\pi(T_1)\ldots \pi(T_{r-1})|\geq \min\{q-1,\,\Sum{i=1}{r-1}|T_i|\}.$$ If $|\pi(T_1)\ldots \pi(T_{r-1})|\geq q-1$, then the Cauchy-Davenport Theorem implies $|\pi(T_1)\ldots \pi(T_{r-1})\pi(T_r)|=q$ in view of $|\pi(T_r)|\geq |T_r|\geq 2$, as desired. Thus $|\pi(T_1)\ldots \pi(T_{r-1})|\geq \Sum{i=1}{r-1}|T_i|$, and now the Cauchy-Davenport Theorem instead implies $$|\pi(T_1)\ldots \pi(T_{r-1})\pi(T_r)|\geq \min \{q, \, \Sum{i=1}{r-1}|T_i|+|T_r|-1\}=q,$$ with the final equality in view of the hypothesis $\Sum{i=1}{r}|T_i|\geq q+1$. Thus we see that (ii) follows from (i).

It remains to prove (i). Translating between the multiplicative notation of \eqref{conj-classes} and the additive notation of Lemma \ref{pq-lem-not-arith}, we see that the sets described in \eqref{add-conj} correspond to the $G$-orbits contained in $G'$ as described by  \eqref{conj-classes}. In particular,  we see that a set $X$ which is a union of $G$-orbits of size $p$ possibly union $\{1\}$ cannot be a (multiplicative) arithmetic progression unless $|X|\leq 1$ or $|X|\geq q-1$. Thus, in view of the conclusion of the first paragraph (and since $|\pi(T_1)|\geq |T_1|\geq 2$ by \eqref{alg-conc-2}), we may assume each $\pi(T_1)\ldots\pi(T_j)$, for $j\in [1,r-1]$, is not a (multiplicative) arithmetic progression, else $|\pi(T_1)\ldots\pi(T_r)|\geq q-1$ follows, as desired. But that means we can apply Vosper's Theorem to the product-sets $\Big(\pi(T_1)\ldots\pi(T_{j})\Big)\big(\pi(T_{j+1})\Big)$, for $j\in [1,r-1]$, to obtain the estimate $$|\pi(T_1)\ldots\pi(T_r)|\geq \min\{q-1,\,\Sum{i=1}{r}|\pi(T_i)|\}\geq\min\{q-1,\,\Sum{i=1}{r}|T_i|\},$$ with the second inequality in view of \eqref{alg-conc-2}, as desired.
\end{proof}

Lemma \ref{pq-lem-the-algorithm-2original} is the counterpart to Lemma \ref{pq-lem-not-many-alpha}, showing that a counter-example to Theorem \ref{thm-pq-main} cannot have too many terms from the same order $p$ subgroup $H\leq G$.

\begin{lemma}\label{pq-lem-the-algorithm-2original} Let  $p$ and $q$ be  odd primes with $p\mid q-1$,  let $G=F_{pq}$, and let  $S\in \mathcal A(G)$. If $|S|\geq 2q+1$, then $$\vp_{H}(S)=\Summ{g\in H}\vp_g(S)\leq q \quad\mbox{ for every subgroup \ $H\leq G$ \ with  \ $|H|=p$}.$$
\end{lemma}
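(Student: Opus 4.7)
The plan is to argue by contradiction: assume $\vp_H(S)\geq q+1$ for some order-$p$ subgroup $H\leq G$, and produce a factorization $S=A\bdot B$ with both $A$ and $B$ nontrivial product-one subsequences, contradicting $S\in\mathcal A(G)$.

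Since $\vp_H(S)\geq q+1>\mathsf D(H)=p$, I first extract from $S_H$ a nontrivial product-one subsequence $U$ with $|U|\leq p$ (using $\mathsf d(H)=p-1$). Set $V:=U^{[-1]}\bdot S$; then $|V|\geq 2q+1-p$ and, since $\pi(U)\ni 1$ and $\pi(S)\ni 1$, the $\phi_{G'}$-sum of $V$ vanishes, forcing $\pi(V)\subseteq G'$. It therefore suffices to prove $|\pi(V)|\geq q$, since combined with $\pi(V)\subseteq G'$ and $|G'|=q$ this gives $\pi(V)=G'\ni 1$, making $V$ itself a product-one sequence and yielding the desired factorization $S=U\bdot V$.

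To lower-bound $|\pi(V)|$ I fix an ordering $V^*$ of $V$ and apply Lemma~\ref{lem-algorithm} to $V^*$ taking the abelian subgroup to be $H$, with $\omega_0=0$, $\omega=q+1$, and $\omega_H=p-1$ (valid since $\vp_H(V)\geq\vp_H(S)-|U|\geq q+1-p\geq p-1$). The resulting factorization ${V'}^*=T_1^*\bdot\ldots\bdot T_r^*\bdot R^*$ satisfies $\pi(V)\supseteq\pi(T_1)\cdots\pi(T_r)\cdot\pi(R)$, and because $\pi(V)\subseteq G'$ the single $G'$-coset containing this product-set is $G'$ itself. In case~(ii) the bound $\sum|T_i|\geq q+1$ combined with Lemma~\ref{pq-lem-alg-help}(ii) gives $|\pi(T_1)\cdots\pi(T_r)|=q$, a full $G'$-coset, so $|\pi(V)|\geq q$ follows at once. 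In case~(iii) (with $\la\supp(R)\ra=G$, to avoid case~(i)), the hypothesis $\la\supp(R)\ra=G$ forces $R$ to contain two non-commuting terms, hence $|\pi(R)|\geq 2$; combining with $|\pi(T_1)\cdots\pi(T_r)|\geq q-1$ from Lemma~\ref{pq-lem-alg-help}(i) (appending $H$-terms from $R$ to the $T_i$'s if necessary to push $\sum|T_i|$ up to $q-1$), the Cauchy-Davenport Theorem applied inside the $G'$-coset $\cong\mathbb F_q$ yields $|\pi(V)|\geq(q-1)+2-1=q$.

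In case~(i), $\la\supp(R)\ra<G$ is a proper subgroup of $G$. If $\supp(R)\subseteq G'$, then $|R|\leq\vp_{G'}(S)\leq(q-3)/2$ by Lemma~\ref{pq-lem-not-many-alpha}, while the algorithm's bound $\sum|T_i|\leq q$ forces $|R|\geq|V|-q\geq q+1-p$; the inequality $q+1-p\leq(q-3)/2$ simplifies to $q\leq 2p-5$, which is impossible for every admissible pair of odd primes $p<q$ with $p\mid q-1$ (one checks $q\geq 2p+1$ always). If instead $\supp(R)\subseteq H'$ for some order-$p$ subgroup $H'$, then $R$ lies in the abelian group $H'$ and, since $|R|\geq q+1-p\geq p$, one may extract a further product-one subsequence $U'\mid R$ within $H'$; replacing $(U,V)$ by $(U\bdot U',\,{U'}^{[-1]}\bdot V)$ iterates the argument with strictly smaller $|V|$. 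An accounting argument shows that this recursion cannot remain in case~(i) with $R\subseteq H'$ indefinitely, so after finitely many steps one lands in case~(ii), case~(iii), or case~(i.a), each of which supplies the required contradiction.

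The principal obstacle will be case~(i) with $R$ collapsing into an order-$p$ subgroup $H'$ possibly distinct from $H$: the recursion must be controlled so that the accumulated product-one factor $U\bdot U'\bdot\ldots$ remains of bounded length and the algorithm applied to the shrinking $V$ eventually escapes this degenerate case, where the combination of Lemma~\ref{pq-lem-alg-help}, the non-abelian structure of $G$, and the fact that $\vp_{G'}(S)\leq(q-3)/2$ from Lemma~\ref{pq-lem-not-many-alpha} all must be balanced carefully.
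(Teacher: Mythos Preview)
Your overall strategy---first extracting a short product-one $U\mid S$ inside $H$ and then showing $|\pi(V)|=q$ for $V=U^{[-1]}\bdot S$---is a natural variation on the paper's approach, but the execution has genuine gaps in both case~(iii) and case~(i).

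In case~(iii), the parenthetical ``appending $H$-terms from $R$ to the $T_i$'s if necessary to push $\sum|T_i|$ up to $q-1$'' is the heart of the matter and it does not go through. Even granting that appending a single $h\in H$ to $T_r$ can preserve $|\pi(T_r')|\ge|T_r'|$ (which itself needs an orbit argument), the count is wrong: with your choice $\omega_H=p-1$, one only obtains $\vp_H(T_1\bdot\ldots\bdot T_r)\ge (q+1-p)-(p-1)=q-2p+2$, hence $\sum_i|T_i|\ge q-2p+3$, and the $p-1$ available $H$-terms in $R$ cannot close the gap to $q-1$ once $p\ge 5$. The paper sidesteps this by applying Lemma~\ref{lem-algorithm} directly to the product-one ordering $S^*$ of the atom, with $\omega_H=p+1$. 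Then $\vp_H(T_1\bdot\ldots\bdot T_r)\ge q-p$ gives $\sum_i|T_i|\ge q-p+1$, and the remaining shortfall is filled not by shuffling $H$-terms into the $T_i$, but by a separate block $g_0\bdot R_\alpha$ with $g_0\in\supp(R)\cap H$ and $R_\alpha\mid R$ disjoint from $H$ of length $p-1$, for which Lemma~\ref{pq-lem-porbit} yields $|\pi(g_0\bdot R_\alpha)|\ge p$; Cauchy-Davenport then gives $q$, and the remaining $p$ terms of $R$ in $H$ furnish a fresh product-one subsequence $R'$.

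In case~(i), the recursion you yourself flag as the ``principal obstacle'' is not resolved. After replacing $(U,V)$ by $(U\bdot U',\,{U'}^{[-1]}\bdot V)$, the new $V$ is shorter by at most $p$, but you lose control of $\vp_H$ (if $H'\neq H$) and of the length bounds needed to invoke Lemma~\ref{pq-lem-not-many-alpha} again; no termination argument is supplied. The paper avoids any recursion here: because the algorithm is applied to $S$ itself with $\omega_H=p+1$, in case~(i) the bound $\sum_i|T_i|\le q$ together with $\vp_H(S)\ge q+1$ forces $R$ to contain an $H$-term, so $\la\supp(R)\ra=H$ and $|R|\ge q+1>p$. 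Crucially, $R^*$ is then a consecutive block of a product-one ordering of the \emph{atom} $S$, so a product-one consecutive subsequence of $R^*$ (which exists since $|R|>|H|$) immediately contradicts \cite[Lemma~2.1]{DavI}. It is precisely this consecutive-subsequence structure of the atom that is lost when you first pass to $V=U^{[-1]}\bdot S$.
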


\begin{proof}
Since $S\in \A(G)$, let $S^*\in \Fc(G)$ be an ordering of $S$, so $[S^*]=S$, with $\pi(S^*)=1$. Since $S$ is an atom of size $|S|>1$, we have $1\notin \supp(S)$. Assume by contradiction that there is an order $p$ subgroup $H\leq G$ with \be\label{lots-of-tau}\vp_H(S)\geq q+1.\ee Consequently, since $q+1\geq p+1$, we can apply Lemma \ref{lem-algorithm} to $S^*$ using $H$ with \ $\omega=q+1$, \, $\omega_H=p+1$ \ and \ $\omega_0=0$. Let ${S'}^*=T^*_1\bdot\ldots \bdot T^*_r\bdot R^*$ be the resulting factorization with all notation as given by Lemma \ref{lem-algorithm}.
Since $\pi(S^*)=1$, \eqref{alg-conc-1} ensures that \be\label{p-one}\pi({S'}^*)=1.\ee
There are three cases depending on whether (i), (ii) or (iii) holds in Lemma \ref{lem-algorithm}.

\subsection*{Case 1:}  Lemma \ref{lem-algorithm}(i) holds. Then $\Sum{i=1}{r}|T_i|\leq \omega-1=q$ with $\la\supp(R)\ra$ a proper subgroup. In view of \eqref{lots-of-tau} and $\Sum{i=1}{r}|T_i|\leq \omega-1=q$, we see that $\la \supp(R)\ra$ must contain a term from $H$. Moreover, since  $S\in \A(G)$ with $|S|\geq 2q+1\geq 2$ ensures that no term of $S$ is equal to $1$, it follows that $\supp(R)$ contains a generating element from $H$, in which case $\la \supp(R)\ra<G$ being proper forces  $\la \supp(R)\ra=H$. But now we have $|R|\geq |S|-\Sum{i=1}{r}|T_i|\geq 2q+1-q=q+1>p=|H|$, in which case we can apply \cite[Lemma 2.4.1]{DavI} to find a product-one consecutive subsequence of $R^*$ that is nontrivial and proper, which contradicts \cite[Lemma 2.1]{DavI} in view of $[{S'}^*]=S\in \mathcal A(G)$ and \eqref{p-one}.

\subsection*{Case 2:} Lemma \ref{lem-algorithm}(ii) holds. Then $\Sum{i=1}{r}|T_i|\geq \omega=q+1$ and there are at least $p+1=|H|+1$ terms of $R$ from $H$, in which case \cite[Lemma 2.4.1]{DavI} ensures that $R$ contains a nontrivial, product-one subsequence $R'\mid R$. Since $R'\mid R$, we have $T_1\bdot\ldots \bdot T_r\mid S\bdot {R'}^{[-1]}$. In consequence, since lemma \ref{pq-lem-alg-help}(ii) and $ \Sum{i=1}{r}|T_i|\geq q+1$ show that $\pi(T_1\bdot\ldots \bdot T_r)$ is a full $G'$-coset, it follows that  $\pi(S\bdot {R'}^{[-1]})$ is also a full $G'$-coset. Hence, since  \cite[Lemma 2.2]{DavI} implies  $\pi(S\bdot {R'}^{[-1]})\subseteq G'$, we conclude that $\pi(S\bdot {R'}^{[-1]})= G'$, in which case $S=(S\bdot {R'}^{[-1]})\bdot R'$ is a nontrivial factorization of $S$, contradicting that $S\in \mathcal A(G)$ is an atom.

\subsection*{Case 3:} Lemma \ref{lem-algorithm}(iii) holds. Then $\Sum{i=1}{r}|T_i|\leq \omega-1=q$ and $\vp_H(R)=p+1$. Thus \eqref{lots-of-tau} ensures that \be\label{workit}\vp_H(T_1\bdot\ldots\bdot T_r)= \vp_H(S)-p-1\geq  q-p.\ee Since $H$ is an abelian subgroup, we see that $|\pi(T_i)|\geq |T_i|\geq 2$ (from \eqref{alg-conc-2}) ensures that each $T_i$ contains some term from $G\setminus H$. Combined with \eqref{workit}, this implies \be\label{gymnatic}|T_1\bdot\ldots\bdot T_r|\geq q-p+r\geq q-p+1,\ee where $r\geq 1$ (which is equivalent to $R\neq S$)  follow in view of of $\vp_H(R)=p+1<q+1\leq \vp_H(S)$.

Since $\Sum{i=1}{r}|T_i|\leq \omega-1=q$ and $\vp_H(R)=p+1$, there are at least $$|S|-q-p-1\geq 2q+1-q-p-1=q-p\geq p-1$$ terms of $R$ from $G\setminus H$ (recall that $p\mid q-1$ with $p$ odd implies $q\geq 2p+1$). Thus we can find a subsequence $R_\alpha\mid R$ with \be\label{seesee}|R_\alpha|=p-1\quad \und\quad\supp(R_\alpha)\cap H=\emptyset.\ee Let $g_0\in \supp(R)\cap H$. Then $\la\supp(g_0\bdot R_\alpha)\ra=G$ (in view of \eqref{seesee}), in which case Lemma \ref{pq-lem-porbit} implies that \be\label{dunsteri} |\pi(g_0\bdot R_\alpha)|\geq \min\{p,\,|\pi(g_0\bdot R_\alpha)|\}=p.\ee Since $\pi(g_0\bdot R_\alpha)$ is contained inside a $G'$-coset with $G'\unlhd G$ a normal subgroup of prime order $q$, we can apply the Cauchy-Davenport Theorem  and then make use of Lemma \ref{pq-lem-alg-help}, \eqref{dunsteri} and \eqref{gymnatic} to conclude that
\ber\nn|\pi(T_1\bdot\ldots \bdot T_r)\pi(g_0\bdot R_{\alpha})|&\geq& \min\{q,\,
|\pi(T_1\bdot\ldots \bdot T_r)|+|\pi(g_0\bdot R_{\alpha})|-1\}\\ &\geq& \min\Big\{q,\,\min\{q-1,\,\Sum{i=1}{r}|T_i|\}+|\pi(g_0\bdot R_{\alpha})|-1
\Big\}\nn\\ &\geq & \min\Big\{q,\,\min\{q-1,\,q-p+1\}+p-1
\Big\}
=q.\label{luminous}\eer

Since $\vp_H(R)=p+1$ and $\vp_H(g_0\bdot R_\alpha)=1$, we still have $p=|H|$ terms of $R\bdot (g_0\bdot R_{\alpha})^{[-1]}$ from $H$. Thus \cite[Lemma 2.4.1]{DavI} ensures that we have a nontrivial, product-one subsequence $R'\mid R\bdot (g_0\bdot R_{\alpha})^{[-1]}$. In view of \eqref{p-one} and \cite[Lemma 2.2]{DavI}, we see that $\pi(S\bdot {R'}^{-1})\subseteq G'$. However, since $S\bdot {R'}^{-1}$ contains the subsequence $T_1\bdot\ldots\bdot T_r\bdot g_0\bdot R_\alpha$, it follows from \eqref{luminous} that $\pi(S\bdot {R'}^{-1})=G'$. Thus $S=(S\bdot {R'}^{-1})\bdot R'$ is a nontrivial factorization of $S$, contradicting that $S\in \mathcal A(G)$ is an atom. This completes the proof.
\end{proof}




With the above work complete, we are now ready to begin the main portion of the proof of Theorem \ref{thm-pq-main}.

\begin{proof}[Proof of Theorem \ref{thm-pq-main}] Let $G=F_{pq}$. In view of Lemma \ref{pqlem-lower-bound}, it suffice to prove the upper bound $\mathsf D(G)\leq 2q$.
If $p=2$, then \cite[Lemma 2.4.1]{DavI} implies $\mathsf D(G)\leq |G|=2q$, as desired. Therefore we may assume $p$ is odd, and thus also $q$ in view of $p\mid q-1$. Note that this implies $$q\geq 2p+1.$$
Let $S\in \mathcal A(G)$ be an atom with $|S|=\mathsf D(G)$
and suppose by contradiction that $|S|\geq 2q+1$. Since $S\in \mathcal A(G)$ is an atom with $|S|\geq 2$, we have $1\notin \supp(S)$.  Let $S^*\in \Fc^*(G)$ be an ordering of $S$ with $\pi(S^*)=1$.
By Lemma \ref{pq-lem-not-many-alpha}, we have \be\label{little-alph} \vp_{G'}(S)\leq \frac{q-3}{2}.\ee
 We divide the proof into $2$ main cases.

\subsection*{Case 1:} $1\in \Pi_{\leq q-p}(S)$.

In view of the case hypothesis, let $U\mid S$ be a nontrivial, product-one subsequence with $|U|\leq q-p$.  Let $W=S\bdot U^{[-1]}$.

We first show that we can assume $|\la \supp(U)\ra|=p$ with $|U|\leq p$. If $\vp_{G'}(W)=0$, then set $W_0$ to be the trivial sequence. Otherwise, in view of $|W|=|S|-|U|\geq q+p+1$ and \eqref{little-alph}, we can find a subsequence $W_0\mid W$ containing all terms from $G'$ and exactly $1$ term from $G\setminus G'$. In view of Lemma \ref{pq-lem-help1}, we have $|\pi(W_0)|\geq |W_0|$; moreover, if $W_0$ is nontrivial, then $|\pi(W_0)|\geq |W_0|\geq 2$, which together with $\mathsf Z(G)=\{1\}$ ensures that $\pi(W_0)\cap \Big(G\setminus \mathsf Z(G)\Big)\neq \emptyset$. Thus, letting $W^*\in \Fc^*(G)$ be any ordering of $W$ such that $[W^*(1,|W_0|)]=W_0$, we can apply Lemma \ref{lem-algorithm} to $W^*$ taking $H$ trivial, $\omega=q+1$, \ $\omega_H=-1$, and $\omega_0=|W_0|\leq \frac{q-1}{2}$. Let ${W'}^*=T_1^*\bdot\ldots\bdot T_r^*\bdot R^*$ be the resulting factorization with all notation as given by Lemma \ref{lem-algorithm}. Since $\omega_H$ is negative, Lemma \ref{lem-algorithm}(iii) cannot hold. If Lemma \ref{lem-algorithm}(ii) holds, then Lemma \ref{pq-lem-alg-help}(ii) implies that $\pi(W)=\pi(S\bdot U^{[-1]})$ is a full  $G'$-coset. However, since $U$ is a nontrivial, product-one subsequence, \cite[Lemma 2.2]{DavI} then implies that this full $G'$-coset must be $G'$ itself, whence $S=(S\bdot U^{[-1]})\bdot U$ is a nontrivial factorization of $S$, contradicting that $S\in \mathcal A(G)$ is an atom. Therefore, we see that Lemma \ref{lem-algorithm}(i) must hold, in which case $|R|= |W|-\Sum{i=1}{r}|T_i|\geq 2q+1-(q-p)-q=p+1$ with $H:=\la\supp(R)\ra<G$ proper. Hence, since all terms of $W$ from $G'$ were included in $W_0\mid T_1$, it follows that $|H|=p$. But now we have $p+1$ terms from a group of order $p$, in which case \cite[Lemma 2.4.1]{DavI} yields a nontrivial, product-one subsequence with all terms from $H$ having length at most $p\leq q-p$. Exchanging this product-one sequence for $U$, we can now assume that \be\label{elfkin}|\la \supp(U)\ra|=p\quad \und\quad |U|\leq p\leq q-p.\ee

Let $W=S\bdot U^{[-1]}$, define $W_0\mid W$ and $W^*$ as before, and once more apply Lemma \ref{lem-algorithm} to $W^*$ taking $H$ trivial, $\omega=q+1$, \ $\omega_H=-1$, and $\omega_0=|W_0|\leq \frac{q-1}{2}$. Let ${W'}^*=T_1^*\bdot\ldots\bdot T_r^*\bdot R^*$ be the resulting factorization with all notation as given by Lemma \ref{lem-algorithm}. Repeating the above arguments using the new $U$, we again find that Lemma \ref{lem-algorithm}(i) holds  with $$|R|\geq |S|-|U|-\omega+1\geq 2q+1-p-q=q-p+1\geq p+1$$ and  $H'=\la \supp(R)\ra<G$ an order $p$ subgroup. If  $H'=H=\la\supp(U)\ra$, then all  terms from $R\bdot U$ will be from the same order $p$ subgroup. However, since $|R\bdot U|\geq  |S|-\omega+1\geq 2q+1-q=q+1$, this would contradict Lemma \ref{pq-lem-the-algorithm-2original}. Therefore, we must have $H'\neq H$.
Applying \cite[Lemma 2.4.1]{DavI} to $R$, we can find another nontrivial, product-one subsequence $U'$ satisfying \eqref{elfkin} with $\la\supp(U')\ra=H'$.

Let $V=W\bdot {U'}^{-1}\bdot U$. Thus we swap the product-one sequences $U'$ and $U$. Since $|R|\geq p+1$ with all terms from $H'$, we see that $R\bdot {U'}^{[-1]}\bdot U$ contains terms from both $H$ and $H'$. Since no term of $S$ is equal to $1$, this means that that there is a pair of non-commuting terms $g_0,\,h_0\in \supp(R\bdot {U'}^{[-1]}\bdot U)$. Consequently, if $\Sum{i=1}{r}|T_i|\geq q-1$, then Lemma \ref{pq-lem-alg-help}(i) and the Cauchy-Davenport Theorem together imply that $|\pi(T_1)\ldots \pi(T_r)\pi(g_0\bdot h_0)|\geq q$, in which case $\pi(S\bdot {U'}^{[-1]})$ is a full $G'$-coset. But then, as before, since $U'$ is a product-one subsequence, \cite[Lemma 2.2]{DavI} ensures that this $G'$-coset is $G'$ itself, so that $S=(S\bdot {U'}^{[-1]})\bdot U'$ is a nontrivial factorization of $S$, contradicting that $S\in \mathcal A(G)$ is an atom. Therefore, we must have $\Sum{i=1}{r}|T_i|\leq q-2$.

Let  $V_0=T_1\bdot \ldots \bdot T_r$ and let ${V}^*$ be an ordering of $V=W\bdot {U'}^{-1}\bdot U$ with $[{V}^*(1,|V_0|)]=V_0$. In view of Lemma \ref{pq-lem-alg-help} and $|V_0|=\Sum{i=1}{r}|T_i|\leq q-2$, we have $|\pi(V_0)|\geq |V_0|$. Thus we can once more apply Lemma \ref{lem-algorithm} to ${V}^*$ taking  taking $H$ trivial, $\omega=q+1$, \ $\omega_H=-1$, and $\omega_0=|V_0|\leq q-2$. Let ${V'}^*=T'_1\bdot \ldots \bdot T'_{r'}\bdot R'$ be the resulting factorization.
Since $V_0\mid T'_1$ with $V_0=T_1\bdot\ldots\bdot T_r$, it follows that $R'\mid R\bdot {U'}^{[-1]}\bdot U$. Now $\supp(R\bdot {U'}^{[-1]}\bdot U)\subseteq H\cup H'$ with $\vp_{H}(R\bdot {U'}^{[-1]}\bdot U)=p$. Consequently, at most $p$ terms of $R'$ are from $H$ with all other terms from $H'$. However,
as argued above, Lemma \ref{lem-algorithm}(i) must hold with all of the at least $p+1$ terms of $R'$ from the same order $p$ subgroup. Since there are only at most $p$ terms of $R'$ from $H$, this order $p$ subgroup cannot be $H$, and thus all terms of $R'$ are from $H'$ (in view of $\supp(R')\subseteq H\cup H'$). But now $\supp(R'\bdot U')\subseteq H'$ with $|R'\bdot U'|= |S|-\Sum{i=1}{r'}|T'_i|\geq |S|-\omega+1\geq 2q+1-q=q+1$ (with the first inequality from Lemma \ref{lem-algorithm}(i) and the second by hypothesis), which is contrary to Lemma \ref{pq-lem-the-algorithm-2original}. This completes Case 1.

\subsection*{Case 2:} $1\notin \Pi_{\leq q-p}(S)$.

If there were $p$ terms of $S$ from the same order $p$ subgroup, then we could apply \cite[Lemma 2.4.1]{DavI} to find a nontrivial, product-one subsequence with length at most $p\leq q-p$, which is contrary to case hypothesis. Therefore \be \label{little-tau}\vp_H(S)\leq p-1\quad\mbox{ for every }\; H\leq G\;\mbox{ with }\; |H|=p.\ee
From Lemma \ref{pq-lem-short-zs}, we can find a nontrivial, product-one subsequence $U\mid S$ with $|U|\leq q$. In view of $|S|-|U|\geq 2q+1-q=q+1\geq \frac{q-3}{2}+p$, \eqref{little-alph} and \eqref{little-tau}, we can find two non-commuting terms $g_0,\,h_0\in \supp(S\bdot U^{[-1]})\cap G\setminus G'$. Since any $2$ non-commuting terms generate $G$, we have \be\label{leedto}\la g_0,\,h_0\ra=G\quad\mbox{ with }\quad g_0,\,h_0\in G\setminus G'.\ee Let $W= (U\bdot g_0\bdot h_0)^{[-1]}\bdot S$.

If $\vp_{G'}(W)=0$, then set $W_0$ to be the trivial sequence. Otherwise, in view of $|W|=|S|-|U|-2\geq 2q+1-q-2=q-1$ and \eqref{little-alph}, we can find a subsequence $W_0\mid W$ containing all terms from $G'$ and exactly $1$ term from $G\setminus G'$. In view of \eqref{little-alph} and Lemma \ref{pq-lem-help1}, we have $|\pi(W_0)|\geq |W_0|$; moreover, if $W_0$ is nontrivial, then $|\pi(W_0)|\geq |W_0|\geq 2$, which together with $\mathsf Z(G)=\{1\}$ ensures that $\pi(W_0)\cap \Big(G\setminus \mathsf Z(G)\Big)\neq \emptyset$. Thus, letting $W^*\in \Fc^*(G)$ be any ordering of $W$ such that $[W^*(1,|W_0|)]=W_0$, we can apply Lemma \ref{lem-algorithm} to $W^*$ taking $H$ trivial, $\omega=q-p+1$, \ $\omega_H=-1$, and $\omega_0=|W_0|\leq \frac{q-1}{2}\leq q-p+1$. Let ${W'}^*=T_1^*\bdot\ldots\bdot T_r^*\bdot R^*$ be the resulting factorization with all notation as given by Lemma \ref{lem-algorithm}.
Since $\omega_H$ is negative, Lemma \ref{lem-algorithm}(iii) cannot hold. This gives two subcases based on whether (i) or (ii) from Lemma \ref{lem-algorithm} holds.

\subsection*{Case 2.1:} Lemma \ref{lem-algorithm}(ii) holds.

In this case, we have $q-p+1=\omega\leq \Sum{i=1}{r}|T_i|\leq \omega+1=q-p+2\leq q-1$. Thus Lemma \ref{pq-lem-alg-help} implies that \be\label{will2}|\pi(T_1)\ldots\pi(T_r)|\geq  \Sum{i=1}{r}|T_i|=q-p+1+\epsilon,\ee
 where $\epsilon\in \{0,1\}$.
In view of \eqref{leedto}, we have $\la\supp(R\bdot g_0\bdot h_0)\ra=G$. Since $|W|+2=|S|-|U|\geq q+1$, we have $|R|+2\geq q+1-\Sum{i=1}{r}|T_i|=p-\epsilon$. Consequently, Lemma \ref{pq-lem-porbit} implies that $|\pi(R\bdot g_0\bdot h_0)|\geq p-\epsilon$. But now the Cauchy-Davenport Theorem together with \eqref{will2} implies that \ber\label{delvid}\nn\left|\Big(\pi(T_1)\ldots\pi(T_r)\Big)\Big(\pi(R\bdot g_0\bdot h_0)\Big)\right|&\geq& \min \{q,\,|\pi(T_1)\ldots\pi(T_r)|+|\pi(R\bdot g_0\bdot h_0)|-1\}\\&\geq&\nn  \min\{q,\,(q-p+1+\epsilon)+(p-\epsilon)-1\}=q.\eer As a result, we see that $\pi(S\bdot U^{[-1]})=\pi(T_1\bdot\ldots\bdot T_r\bdot R\bdot g_0\bdot h_0)$ is a full $G'$-coset. However, since $U$ is a product-one subsequence, \cite[Lemma 2.2]{DavI} ensures that this $G'$-coset is $G'$ itself, whence $S=(S\bdot U^{[-1]})\bdot U$ is a nontrivial factorization of $S$, contradicting that $S\in \mathcal A(G)$ is an atom.

\subsection*{Case 2.2:} Lemma \ref{lem-algorithm}(i) holds.

In this case, we have $\Sum{i=1}{r}|T_i|\leq \omega-1=q-p$, so that \be\label{estimateR}|R|\geq |S|-|U|-2-(q-p)\geq 2q+1-q-2-q+p=p-1,\ee with $H:=\la\supp(R)\ra<G$ proper. As all terms of $W$ from $G'$ were included in $W_0\mid T_1$, it follows that $H$ must have order $p$. Thus \eqref{little-tau} ensures that $|R|=p-1$ with $g_0,\,h_0\in G\setminus H$. Since $|R|=p-1$, all estimates used in \eqref{estimateR} must be equalities. In particular, $\Sum{i=1}{r}|T_i|=\omega-1=q-p$.

Let $g'_0\in \supp(R)$. Since $h_0\notin H$ but $g'_0\in H$, it follows that $g'_0$ and $h_0$ are non-commuting terms from $G\setminus G'$. In particular, \eqref{leedto} holds with $g'_0$ in place of $g_0$. Let $V=W\bdot {g'}_0^{[-1]}\bdot g_0$. Thus we swap the terms $g_0$ and $g'_0$. Since  $\Sum{i=1}{r}|T_i|=\omega-1=q-p\leq q-1$, Lemma \ref{pq-lem-alg-help}(i) implies that $|\pi(V_0)|\geq |V_0|$, where $V_0=T_1\bdot\ldots\bdot T_r$. Thus, letting $V^*$ be any ordering of $V$ such that $[V^*(1,|V_0|)]=V_0$,  we can once more apply Lemma \ref{lem-algorithm} to ${V}^*$ taking  taking $H$ trivial, $\omega=q-p+1$, \ $\omega_H=-1$, and $\omega_0=|V_0|=q-p\leq \omega-1$. Let ${V'}^*={T'}^*_1\bdot \ldots \bdot {T'}^*_{r'}\bdot {R'}^*$ be the resulting factorization. As before, Lemma \ref{lem-algorithm}(iii) cannot hold, while if Lemma \ref{lem-algorithm}(ii) holds, then Case 2.1 completes the proof. Therefore, Lemma \ref{lem-algorithm}(i) must hold, in which case $\Sum{i=1}{r'}|T'_i|\leq \omega-1=q-p=|V_0|$. Since $V_0\mid T'_1$, this is only possible if $r'=1$ with $T'_1=V_0=T_1\bdot\ldots\bdot T_r$, in which case $R'=R\bdot {g'}_0^{[-1]}\bdot g_0$. However,
since $R\bdot {g'}_0^{[-1]}\bdot g_0$ contains exactly $p-2>0$ terms from $H$ along with the term $g_0\notin H$, it follows that $\la \supp(R')\ra=\la \supp(R\bdot {g'}_0^{[-1]}\bdot g_0)\ra=G$, which is contrary to Lemma \ref{lem-algorithm}(i). This completes the proof.
\end{proof}

\section{The Near Dihedral Group}\label{sec-near-dihedral}

The goal of this section is to prove the following theorem, which will be needed for the  proof of Theorem \ref{thm-3over4}. The proof uses the same strategy as for Corollary \ref{pq-cor-small-d}, though more technical care must be taken. Note, since $q$ is an odd prime possessing a square root of $-1$, that $q\equiv 1\mod 4$.

\begin{theorem}
\label{thm-extra} Let $q$ be an odd prime, let $r\in [1,q-1]$ be an integer such that $r^2\equiv -1\mod q$, and let $$G=\la \alpha,\,\tau:\; \alpha^q=1,\quad \tau^4=1,\,\quad \alpha\tau=\tau\alpha^r\ra.$$ Then $\mathsf d(G)=q+2$.
\end{theorem}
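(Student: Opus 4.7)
The plan mirrors Corollary \ref{pq-cor-small-d} with $G/G' \cong C_4$ replacing $G/G' \cong C_p$: here $G' = \la\alpha\ra \cong C_q$ (since $[\tau,\alpha] = \alpha^{r^{-1}-1}$ generates $\la\alpha\ra$) and $G/G' \cong C_4$, so $|G| = 4q$. For the lower bound $\mathsf{d}(G) \geq q+2$, I would exhibit the sequence $\alpha^{[q-1]} \bdot \tau^{[3]}$ of length $q+2$. Any nontrivial product-one subsequence, projected to $G/G' \cong C_4$, must use a number of $\tau$'s divisible by $4$---hence zero, since only three are present---and so consists only of $\alpha$'s; its product is $\alpha^j$ for some $j \in [1, q-1]$, which never equals $1$. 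So no nontrivial product-one subsequence exists.

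For the upper bound $\mathsf{d}(G) \leq q+2$, take $S \in \Fc(G)$ with $|S| \geq q+3$ and aim for $1 \in \Pi(S)$. Mimicking Corollary \ref{pq-cor-small-d}: since $\mathsf{D}(G/G') = \mathsf{D}(C_4) = 4$, I can repeatedly peel nontrivial product-one subsequences off $\phi_{G'}(S)$, each of length at most $4$, producing a subsequence $T \mid S$ with $|T| \geq |S| - 3 \geq q$ and $\pi(T) \subseteq G'$. It then suffices to prove the analog of Lemma \ref{pq-lem-dG}: every $T \in \Fc(G)$ with $\pi(T) \subseteq G'$ and $|T| \geq q$ satisfies $1 \in \Pi(T)$. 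For this, factor $T = T_1 \bdot \ldots \bdot T_\ell$ with $\phi_{G'}(T_i) \in \mathcal A(C_4)$, so $|T_i| \in \{1,2,3,4\}$ and $\pi(T_i) \subseteq G'$. Assuming $1 \notin \pi(T_i)$ for every $i$ (else done), the product-set $\pi(T_1)(\{1\}\cup\pi(T_2))\cdots(\{1\}\cup\pi(T_\ell)) \subseteq \Pi(T) \cap G'$ can be bounded via the Cauchy-Davenport Theorem in $G' \cong C_q$ (legitimate by the commutator-coset observation at the end of Section \ref{sec-notation}), giving $|\Pi(T) \cap G'| \geq \min\{q, \Sum{i=1}{\ell}|\pi(T_i)|\}$; this is at least $\min\{q,|T|\} = q$ as soon as $|\pi(T_i)| \geq |T_i|$ for each $i$, forcing $1 \in G' \subseteq \Pi(T)$.

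The main obstacle, therefore, is showing $|\pi(T_i)| \geq |T_i|$ for each $C_4$-atom of length $\geq 2$: the types $(1,3)$, $(2,2)$, $(1,1,2)$, $(3,3,2)$, $(1,1,1,1)$, $(3,3,3,3)$. Each reduces to an explicit product computation using $\alpha\tau = \tau\alpha^r$ and the identity $r^{-1} \equiv -r \pmod q$ (so $\{1, -1, r, -r\}$ are four distinct residues modulo $q$). The delicate case is the $(2,2)$ atom $T_i = \{\tau^2\alpha^a, \tau^2\alpha^b\}$: direct computation yields $\pi(T_i) = \{\alpha^{b-a}, \alpha^{a-b}\}$, of size $1$ only when $a \equiv b \pmod q$, but then $(\tau^2\alpha^a)^2 = 1$ puts $1 \in \pi(T_i)$, contradicting our standing assumption. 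The length-$3$ and length-$4$ atom cases reduce similarly to small sumset estimates in $\Z/q\Z$ involving only the units $\pm 1$ and $\pm r$, and constitute the ``more technical care'' foreshadowed in the statement of the theorem.
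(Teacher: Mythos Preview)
Your proposal is correct and follows the same overall architecture as the paper: the same lower-bound witness $\alpha^{[q-1]}\bdot\tau^{[3]}$, the same reduction via $\mathsf D(C_4)=4$ to a factorization into $C_4$-atoms $T_i$ with $\pi(T_i)\subseteq G'$, and the same Cauchy--Davenport endgame once one knows that $1\in\pi(T_i)$ or $|\pi(T_i)|\geq |T_i|$.

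The one substantive difference is how you establish that key dichotomy. You propose an explicit case split over the atom types $(1,3)$, $(2,2)$, $(1,1,2)$, $(3,3,2)$, $(1,1,1,1)$, $(3,3,3,3)$ and direct computation in $\Z/q\Z$ using $r^2\equiv -1$. The paper instead isolates this as Lemma \ref{lem-for-small} and proves it by a short structural argument: it describes the order-$4$ subgroups $H_j=\la\tau\alpha^j\ra$ (pairwise intersecting trivially and meeting $G'$ trivially), observes that two non-identity terms from $G\setminus G'$ commute only if they lie in the same $H_j$, and then, for $|S|\geq 3$, uses the orbit-size formula $|x^{H_j}|\in\{1,4\}$ under conjugation by $\la g_i\ra$ to force either $|\pi|\geq 4$ or $\pi\subseteq H_j\cap(\text{a }G'\text{-coset})$, which has at most one element. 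Your computational route is perfectly valid and more elementary; the paper's route is shorter, treats all atom lengths uniformly, and makes transparent why the $(2,2)$ degeneracy you flagged is exactly the case $\supp(T_i)\subseteq H_j$ forcing $\pi(T_i)=\{1\}$.
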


We begin first with the following analogue of Lemma \ref{pq-lem-dG}.

\begin{lemma}
\label{lem-for-small} Let $q$ be an odd prime, let $r\in [1,q-1]$ be an integer such that $r^2\equiv -1\mod q$, let $$G=\la \alpha,\,\tau:\; \alpha^q=1,\quad \tau^4=1,\,\quad \alpha\tau=\tau\alpha^r\ra,$$ and let $S\in \Fc(G)$ be a sequence such that $\phi_{G'}(S)\in\mathcal A(G/G')$, where $G'=\la \alpha\ra=[G,G]\leq G$ is the commutator subgroup. Then either $1\in \pi(S)$ or $|\pi(S)|\geq |S|$.
\end{lemma}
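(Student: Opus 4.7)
The plan is to exploit the fact that $G/G'\cong C_4$ together with the observation that the conjugation action of $G$ on $G'=\la\alpha\ra$ factors through $G/G'$ via $\alpha^k\mapsto \alpha^{kr^x}$ for $\tau^x\alpha^a\in G$. Since $r^2\equiv -1\pmod q$, the multiplicative order of $r$ modulo $q$ is exactly $4$, so the map $\overline{x}\mapsto r^{\overline{x}}$ from $\Z/4\Z$ to $\F_q^\times$ is injective. Note also that $\phi_{G'}(S)\in \A(G/G')$ already forces $\pi(S)\subseteq G'$ and $|S|\leq \mathsf D(C_4)=4$.

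First I would dispose of the case where some term of $S$ lies in $G'$. Then $\phi_{G'}(S)$ contains the identity of $G/G'$ and, being an atom, must have length $1$, so $S=[g]$ with $g\in G'$. The conclusion is immediate: either $g=1$ or $|\pi(S)|=1=|S|$. So assume $\supp(S)\subseteq G\setminus G'$.

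Fix any ordering $S^*=[s_1,\ldots,s_n]$ of $S$, write $s_i=\tau^{x_i}\alpha^{b_i}$ with $x_i\in\{1,2,3\}$, and set $\pi(S^*)=\alpha^e\in G'$. For each $i\in [0,n-1]$ the $i$-th cyclic shift $[s_{i+1},\ldots,s_n,s_1,\ldots,s_i]$ has product $(s_1\cdots s_i)^{-1}\alpha^e(s_1\cdots s_i)=\alpha^{er^{y_i}}$, where $y_i=x_1+\cdots+x_i\pmod 4$ (the $\alpha$-component of the conjugator commutes with $\alpha^e$, so only the $\tau$-exponent matters). All $n$ such products lie in $\pi(S)$. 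The atom property of $\phi_{G'}(S)$ forces the partial sums $y_0,y_1,\ldots,y_{n-1}$ to be pairwise distinct modulo $4$: indeed $y_i\equiv y_j\pmod 4$ for some $0\leq i<j\leq n-1$ would yield $x_{i+1}+\cdots+x_j\equiv 0\pmod 4$, i.e.\ a proper nontrivial product-one subsequence of $\phi_{G'}(S)$, contradicting its atom status. Combined with the injectivity of $\overline{x}\mapsto r^{\overline{x}}$, this gives $n$ distinct exponents $er^{y_i}$ in $\F_q$. If $e=0$, then $1=\pi(S^*)\in \pi(S)$; otherwise the $n$ cyclic shifts produce $n$ distinct elements of $\pi(S)$, so $|\pi(S)|\geq n=|S|$.

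The main bookkeeping is translating the presentation $\alpha\tau=\tau\alpha^r$ into the conjugation formula $\tau^{-x}\alpha\tau^x=\alpha^{r^x}$ and verifying that cyclic shifts of an ordered product are conjugates of it; once that is in place, the proof reduces to the two arithmetic facts that $r$ has order $4$ modulo $q$ and that the atom property in $C_4$ forces the proper prefix sums of $\phi_{G'}(S^*)$ to be distinct modulo $4$.
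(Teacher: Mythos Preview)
Your proof is correct and takes a genuinely different route from the paper's argument. The paper proceeds by a case analysis on $|S|\in\{1,2,3,4\}$, using the subgroup structure of $G$ (the $q$ cyclic subgroups $H_j=\la\tau\alpha^j\ra$ of order $4$) together with an orbit--stabilizer argument: for $|S|\geq 3$ it shows inductively that $|\pi(g_1\bdot\cdots\bdot g_k)|\geq k$ by checking that if $g_{k}$ commuted with $\pi(g_1\bdot\cdots\bdot g_{k-1})$ then the latter set would lie inside a single $H_j$ and hence meet at most one element of any $G'$-coset.

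Your argument is more uniform and more arithmetic: you produce $|S|$ distinct elements of $\pi(S)$ directly as the conjugates $\alpha^{er^{y_i}}$ of a single ordered product $\alpha^e$ under the $|S|$ cyclic shifts, using only that (a) the partial $\tau$-exponents $y_0,\ldots,y_{n-1}$ are pairwise distinct modulo $4$ because $\phi_{G'}(S)$ is an atom in $C_4$, and (b) $r$ has multiplicative order exactly $4$ modulo $q$. This avoids the subgroup inventory and the case split entirely, and the same scheme would work verbatim for any group of this semidirect-product shape with $G/G'$ cyclic and conjugation acting faithfully on $G'\cong C_q$. The paper's approach, by contrast, is more hands-on and makes the geometry of the order-$4$ subgroups visible, which is not needed here but resonates with the style of the orbit arguments used elsewhere in the paper (e.g.\ Lemma~\ref{pq-lem-porbit}).
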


\begin{proof}
We begin by describing some routinely verified properties of the group $G$. First, we have
$$G'=\la \alpha\ra\quad\und\quad \mathsf Z(G)=\{1\}.$$ Apart from the subgroup $G'\leq G$, there are $q$ subgroups $H_i=\la\tau\alpha^i\ra\leq G$, for $i=0,1,\ldots, q-1$, of order $4$, which have trivial intersection with each other as well as $G'$. Each contains a single element $\tau^2\alpha^{(r+1)i}$ of order $2$, naturally generating an order $2$ subgroup contained in $H_i$. Any of the order $2$ elements along with $G'$ generates the subgroup $K=\la \alpha,\,\tau^2\ra$, which is dihedral of order $2q$. There are no other subgroups apart from $\{1\}$ and $G$. In particular, any two non-identity elements from distinct $H_i$ generate either $K$ (if both have order $2$) or $G$ (otherwise). With this information in hand, we can continue with the proof.

Since $\mathsf D(G/G')=\mathsf D(C_4)=4$ (care of \cite[Lemma 2.4.1]{DavI}) and $\phi_{G'}(S)\in
\mathcal A(G/G')$, we have $1\leq |S|\leq 4$. If $|S|=1$, then $|\pi(S)|\geq |S|$ is trivial. Therefore we may assume $\ell:=|S|\geq 2$, in which case $\supp(S)\subseteq G\setminus G'$ follows from $\phi_{G'}(S)\in
\mathcal A(G/G')$. Let $S=g_1\bdot \ldots\bdot g_\ell$ with $g_i\in G\setminus G'$.

If $|S|=2$, then $|\pi(S)|\geq 2= |S|$ follows, as desired, unless  both terms of $S$ commute. However, the only way two terms from $G\setminus G'$ can commute with each other is if they are from the same order $4$ subgroup $H_j$. However, since $H_j\cap G'=\{1\}$ for every $j\in [0,q-1]$, we see that $\pi(S)\subseteq G'\cap H_j=\{1\}$ then forces $S$ to be a product-one sequence, as desired. Therefore we may assume $|S|\geq 3$.

Observing that any two order $2$ elements have product one modulo $G'$, we see that $|S|\geq 3$
together with $\phi_{G'}(S)\in \mathcal A(G/G')$ ensures that $S$ contains at most one order $2$ element. Thus  w.l.o.g. we may assume $\ord(g_i)=4$ for $i\in [2,\ell]$, while $\ord(g_1)=2$ or $4$. Let $H_{j_i}$ be the order $4$ subgroup containing $g_i$, for $i\in [1,\ell]$. If $\supp(S)\subseteq H_{j_1}$, then, since $\pi(S')\subseteq G'$ follows in view of $\phi_{G'}(S)\in \mathcal A(G/G')$ and $\pi(S)$ being contained in a $G'$-coset (as noted in Section \ref{sec-notation}), it follows that  $\pi(S)\subseteq H_{j_1}\cap G'=\{1\}$, yielding the desired conclusion $1\in\pi(S)$. Therefore we assume there is some $g_i$ from a different order $4$ subgroup $H_{j_i}\neq H_{j_1}$, say w.l.o.g. $g_2$. But then $g_1g_2\neq g_2g_1$, so that $|\pi(g_1\bdot g_2)|=2$.

Let us show that $|\pi(g_1\bdot g_2\bdot g_3)|\geq 3$. Let $X=\pi(g_1\bdot g_2)$. Note that $g_3X\cup Xg_3\subseteq \pi(g_1\bdot g_2\bdot g_3)$.  If $g_3X\neq Xg_3$, then $|\pi(g_1\bdot g_2\bdot g_3)|\geq |X|+1=3$ follows, as claimed. Otherwise, $g_3X= Xg_3$ implies $g_3^{-1}Xg_3=X$, whence  $X$ is stable under conjugation by elements from the order $4$ subgroup $H_{j_3}=\la g_3 \ra$. Thus $|X|\geq |x^{H_{j_3}}|$ for each $x\in X$.
Since $\phi_{G'}(S)\in \mathcal A(G/G')$ is an atom with $G/G'\cong C_4$ abelian, we have $X\subseteq G\setminus G'$. By \eqref{orbit-size}, we have  $|x^{H_{j_3}}|=|H_{j_3}|/|\mathsf C_G(x)\cap H_{j_3}|$.
Now $\mathsf C_G(x)$, for $x\in G\setminus G'$, is simply equal to the order $4$ subgroup that contains $x$.  Since distinct order $4$ groups intersect trivially, it follows that $|x^{H_{j_3}}|=4$ (if $\mathsf C_G(x)\neq H_{j_3}$) or $|x^{H_{j_3}}|=1$ (if  $\mathsf C_G(x)=H_{j_3}$). If
$|x^{H_{j_3}}|=4$, then $|\pi(S)|\geq |\pi(X)|\geq 4\geq |S|$, as desired. Therefore we may assume $|x^{H_{j_3}}|=1$ for every $x\in X$, which is only possible if $X\subseteq H_{j_3}$. As noted in Section \ref{sec-notation}, we also have $\pi(g_1 \bdot g_2)=X$ contained in a $G'$-coset. Hence, since $X\subseteq H_{j_3}$ and $|H_{j_3}\cap G'|=1$, it follows that $|X|\leq 1$, which is contrary to what has already been shown. Thus $|\pi(g_1\bdot g_2\bdot g_3)|\geq 3$, as claimed.

If $|S|=3$, the proof is complete. If $|S|=4$,  repeating the above arguments using $Y=\pi(g_1\bdot g_2\bdot g_3)$ and $g_4$ in place of $X$ and $g_3$ shows that $|\pi(S)|\geq 4$, completing the proof in the final remaining case.
\end{proof}

\begin{proof}[Proof of Theorem \ref{thm-extra}]
The lower bound is easily verified by considering the sequence $\alpha^{[q-1]}\bdot \tau^{[3]}\in \Fc(G)$. It remains to prove $\mathsf d(G)\leq q+2$. Let $S\in \Fc(G)$ be a sequence with $|S|\geq q+3$. We need to show $1\in \Pi(S)$. Since $\mathsf d(G/G')+1=\mathsf D(G/G')=\mathsf D(C_4)=4$ (care of \cite[Lemma 2.4]{DavI}), repeated application of the definition of $\mathsf d(G/G')$ to $\phi_{G'}(S)$ yields a factorization $S=T_1\bdot\ldots\bdot T_\ell\bdot R$, where $\phi_{G'}(T_i)\in \mathcal A(G/G')$ for $i\in [1,\ell]$ and $|R|\leq 3$.
Since $\phi_{G'}(T_i)\in \mathcal A(G/G')$, it follows that \be\label{runran}\pi(T_i)\subseteq G'\quad\mbox{ for all $i\in [1,\ell]$}.\ee
We may assume $1\notin \pi(T_i)$ for $i\in [1,\ell]$, else the proof is complete. But then Lemma \ref{lem-for-small} implies that $$|\{1\}\cup \pi(T_i)|\geq |T_i|+1\quad\mbox{ for $i\in [1,\ell]$}.$$ Thus, since $G'\cong C_q$ is cyclic of prime order, repeated application of the Cauchy-Davenport Theorem yields
$$\Big|\pi(T_\ell)\prod_{i=1}^{\ell-1}(\{1\}\cup \pi(T_i))\Big|\geq \min\{q,\,\Sum{i=1}{\ell}|T_i|\}=\min\{q,\,|S|-|R|\}=q=|G'|,$$ where the penultimate equality follows in view of $|R|\leq 3$ and $|S|\geq q+3$. Thus, together with \eqref{runran}, we see that $1\in G'=\Pi(T_1\bdot\ldots\bdot T_\ell)\cap G'\subseteq \Pi(S)\cap G'\subseteq \Pi(S)$, as desired.
\end{proof}

\section{General Upper Bounds}\label{sec-genbounds}

The goal of this section is to give two general upper bounds for the large Davenport constant of a non-cyclic group. We begin with the first one.

\begin{theorem}\label{thm-2overp}
Let $G$ be a finite, non-cyclic group and let $p$ be the smallest prime divisor of $|G|$. Then $$\mathsf D(G)\leq \frac{2}{p}|G|.$$
\end{theorem}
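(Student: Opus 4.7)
The plan is induction on $|G|$, reducing to a minimal non-cyclic base case which is then handled via Theorem \ref{thm-pq-main} together with a central-extension argument.

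For $p = 2$ the bound is immediate from \eqref{basic-bound-d-D}: $\mathsf D(G) \leq |G| = \tfrac{2}{p}|G|$. So henceforth I assume $p \geq 3$, so that $|G|$ is odd. Suppose first that $G$ contains a proper non-cyclic subgroup $H$. Every prime divisor of $|H|$ also divides $|G|$ and is therefore at least $p$, so by the inductive hypothesis $\mathsf D(H) \leq \tfrac{2}{p}|H|$. Invoking \cite[Theorem 3.2]{DavI} in the form $\mathsf D(G) \leq \mathsf D(H) \cdot [G\!:\!H]$ yields $\mathsf D(G) \leq \tfrac{2}{p}|G|$ and closes the inductive step.

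It therefore suffices to treat the case where every proper subgroup of $G$ is cyclic, i.e., $G$ is minimal non-cyclic. By the classical classification of such groups (Miller--Moreno), together with the fact that $|G|$ is odd (ruling out $Q_8$), the group $G$ is either $C_p \times C_p$---for which $\mathsf D(G) = 2p-1 < 2p = \tfrac{2}{p}|G|$ from \eqref{eta-p2}---or of the form $\langle a, b : a^q = 1,\ b^{p^n}=1,\ b^{-1}ab=a^r\rangle$ for some prime $q$ with $p \mid q-1$, some $n \geq 1$, and some $r$ of multiplicative order $p$ modulo $q$. When $n = 1$ the group is exactly $F_{pq}$, so Theorem \ref{thm-pq-main} gives $\mathsf D(G) = 2q = \tfrac{2}{p}|G|$, with equality.

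The technical crux is the residual case $n \geq 2$, where $G$ has no proper non-cyclic subgroup, so direct induction fails. Here I would exploit the central subgroup $Z := \langle b^p\rangle \cong C_{p^{n-1}}$ and the isomorphism $G/Z \cong F_{pq}$: given a product-one atom $S \in \mathcal A(G)$ of maximal length, factor $\phi_Z(S)$ into $F_{pq}$-atoms $A_1 \bdot \ldots \bdot A_k$, each of length at most $\mathsf D(F_{pq}) = 2q$ by Theorem \ref{thm-pq-main}, and lift to a setpartition $S = B_1 \bdot \ldots \bdot B_k$; choose orderings $B_i^*$ so that $z_i := \pi(B_i^*) \in Z$. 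The atomicity of $S$ must prevent the resulting length-$k$ sequence $(z_i)$ in the cyclic group $Z$ from admitting a proper nontrivial product-one subsequence, which forces $k \leq \mathsf D(Z) = p^{n-1}$ and hence $|S| = \sum_i |B_i| \leq 2q\cdot p^{n-1} = \tfrac{2}{p}|G|$. The delicate step, specific to the non-commutative setting, is to verify that a proper product-one split of $(z_1,\ldots,z_k)$ actually lifts to a decomposition $S = S_1 \bdot S_2$ of two nontrivial product-one subsequences; this requires exploiting the abelian nature of $Z$ and the freedom to reorder within each $B_i$, and is the main novelty beyond the classification and Theorem \ref{thm-pq-main}.
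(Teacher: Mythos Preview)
Your argument is correct and runs parallel to the paper's, but with two differences worth flagging. First, you reduce to minimal non-\emph{cyclic} groups (via the trivial $p=2$ case and induction through \cite[Theorem 3.2]{DavI}), whereas the paper reduces to minimal non-\emph{abelian} groups. Your route is a genuine simplification: it bypasses Theorem~\ref{thm-p-group} entirely, since for odd order the only minimal non-cyclic groups are $C_p\times C_p$ and the metacyclic family you describe (no non-abelian odd $p$-group has all proper subgroups cyclic). A minor quibble: Miller--Moreno classified minimal non-\emph{abelian} groups, so strictly speaking your list follows from their result together with the elementary abelian case and the exclusion of odd $p$-groups, not from their theorem alone.

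Second, what you call the ``delicate step'' and ``main novelty'' for $n\ge 2$ is exactly the content of \cite[Theorem 3.3]{DavI}, which gives $\mathsf D(G)\le \mathsf D(G/Z)\,\mathsf D(Z)$ for any central subgroup $Z$; the paper simply cites this and is done in one line. Your sketch of the lifting argument is fine (centrality of the $z_i$ makes the split of $(z_1,\ldots,z_k)$ lift to a factorization of $S$), but note that to conclude $k\le \mathsf D(Z)$ you need $(z_1,\ldots,z_k)$ itself to be product-one; this follows here because $z_1\cdots z_k\in\pi(S)\cap Z\subseteq G'\cap Z=\{1\}$, a point you should make explicit.
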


\begin{proof}
In view of \cite[Theorem 3.2]{DavI}, we see that it suffices to prove $\mathsf D(H)\leq \frac{2}{p}|H|$ for any nontrivial subgroup $H\leq G$. If $G$ is abelian, then since $G$ is non-cyclic, there must be a subgroup $H\leq G$ with $H\cong C_q^2$ for some prime $q\geq p$. However, \eqref{eta-p2} gives $\mathsf D(H)=\mathsf D(C_q^2)=2q-1=\frac{2q-1}{q^2}|H|<\frac{2}{p}|H|$, as desired. Therefore we may assume $G$ is non-abelian, in which case $G$ contains a minimal non-abelian subgroup.
Thus it suffices to prove the theorem for all finite minimal  non-abelian groups, so we now assume $G$ is a minimal non-abelian group (all proper subgroups are abelian).

If $G$ is a $p$-group, then Theorem \ref{thm-p-group} gives $\mathsf D(G)\leq \frac{p^2+2p-2}{p^3}|G|\leq \frac2p|G|$, also as desired. Therefore, we may assume $G$ is a minimal non-abelian group which is not a $p$--group. The finite minimal non-abelian subgroups were classified by Miller and Moreno \cite{miller-moreno}.  When such a group is not a $p$--group, its commutator subgroup $G'$ is an elementary abelian group of prime power order. Thus $G'\cong C_q^r$ for some prime $q$ and $r\geq 1$. However, if $r\geq 2$, then $G$ contains a subgroup $H\cong C_q^2$, and the desired bound $\mathsf D(H)=\mathsf D(C_q^2)=2q-1\leq\frac{2}{p}|H|$ follows as before. Therefore we may assume $G'$ is cyclic of prime order $q$. But then the classification result of Miller and Moreno tells us that $|G|=p^nq$ for some $n\geq 1$ with $p\mid q-1$. Moreover, there is exactly one such non-abelian group of order $p^nq$ (up to isomorphism), which is given by the presentation
 $$G=\la \alpha,\,\tau:\; \alpha^q=1,\quad\tau^{p^n}=1,\quad \alpha\tau=\tau\alpha^r \ra,$$
where $r^p\equiv 1\mod q$ \ but $r\not\equiv 1\mod q$. It is now routine to calculate $$\mathsf Z(G)=\la \tau^p\ra\quad\und\quad G'=\la \alpha\ra.$$ In particular, $G'\cap \mathsf Z(G)=\{1\}$. Moreover, $G/\mathsf Z(G)$ is a non-abelian group of order $pq$. Thus  \cite[Theorem 3.3]{DavI}, Theorem \ref{thm-pq-main}  and \cite[Lemma 2.4.1]{DavI} yield $$\mathsf D(G)\leq \mathsf D(G/\mathsf Z(G))\mathsf D(\mathsf Z(G))\leq \frac2p|G/\mathsf Z(G)||\mathsf Z(G)|=\frac2p|G|,$$ completing the proof.
\end{proof}

We conclude with the following result, which improves Theorem \ref{thm-2overp} for even order groups.

\begin{theorem}\label{thm-3over4}
Let $G$ be a finite group which is neither cyclic nor isomorphic to a dihedral group of order $2n$ with $n$ odd. Then $$\mathsf D(G)\leq \frac34|G|$$
\end{theorem}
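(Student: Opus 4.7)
The plan is to exhibit, for every admissible $G$, a subgroup $H\leq G$ with $\mathsf{D}(H)\leq \tfrac{3}{4}|H|$; then \cite[Theorem~3.2]{DavI} yields $\mathsf{D}(G)\leq \mathsf{D}(H)|G{:}H|\leq \tfrac{3}{4}|G|$. The easy cases fall immediately: if $|G|$ is odd, Theorem~\ref{thm-2overp} already gives $\mathsf{D}(G)\leq \tfrac{2}{3}|G|$, and if $G$ is abelian it contains $C_p\times C_p$ for some prime $p$, so \eqref{eta-p2} gives $\mathsf{D}(G)\leq (2p-1)|G|/p^2\leq \tfrac{3}{4}|G|$ (with equality forced only at $p=2$).

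For $G$ non-abelian of even order I would seek a minimal non-abelian subgroup $M\leq G$ and argue by Miller--Moreno. A non-abelian $p$-group $M$ is handled by Theorem~\ref{thm-p-group}, which gives $\mathsf{D}(M)\leq \tfrac{p^2+2p-2}{p^3}|M|\leq \tfrac{3}{4}|M|$ (equality exactly at $p=2$). A minimal non-abelian $M$ that is not a $p$-group has the form $V\rtimes C_{p^n}$ with $V\cong C_q^r$ elementary abelian for some prime $q\neq p$ with $p\mid q-1$; if $r\geq 2$ then $V\supseteq C_q^2$ does the job, while if $r=1$ then $M=F_{p^nq}$. For $p\geq 3$, Theorem~\ref{thm-2overp} gives $\mathsf{D}(F_{p^nq})\leq \tfrac{2}{p}|F_{p^nq}|\leq \tfrac{2}{3}|F_{p^nq}|$. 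For $p=2$ and $n\geq 2$ the subgroup $\langle\alpha,\tau^2\rangle\cong C_{2^{n-1}q}$ is cyclic of index~$2$, so the main result of~\cite{DavI} applies and the required bound drops out since $F_{2^nq}$ is not dihedral of twice-odd order. The single obstinate case is $M\cong D_{2q}$ with $q$ an odd prime, where $\mathsf{D}(M)=|M|$ renders the direct lift useless.

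Under that obstinate case I would invoke Sylow theory together with Burnside's transfer theorem: a non-cyclic Sylow $2$-subgroup $P$ of $G$ either contains $C_2^2$ or is non-abelian (both already handled), so $P$ must be cyclic and $G$ then has a normal $2$-complement $N$ of odd order. If $N$ is non-cyclic we finish using $C_q^2\leq N$ (abelian case) or Theorem~\ref{thm-2overp} applied to $N$ (non-abelian odd-order case). Thus I am reduced to $G=C_m\rtimes C_{2^k}$ with $m$ odd, and split on the order $2^j$ of the induced action of $C_{2^k}$ on $C_m$. If $j\leq 1$ then $\tau^2$ centralizes $C_m$ and $\langle C_m,\tau^2\rangle\cong C_{m\cdot 2^{k-1}}$ is a cyclic index-$2$ subgroup, so the main theorem of~\cite{DavI} concludes.

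The residual and genuinely hard case is $j\geq 2$, and this is exactly where Theorem~\ref{thm-extra} is brought to bear. Pick a prime $q\mid m$ on which $\tau$ acts with order $2^j\geq 4$, and set $\beta=\tau^{2^{j-2}}$; then $H=\langle\alpha,\beta\rangle\cong C_q\rtimes C_{2^{k-j+2}}$, the action of $\beta$ on $\alpha$ has order exactly $4$ (so $s:=r^{2^{j-2}}$ satisfies $s^2\equiv -1\pmod q$), and a direct computation gives $\mathsf{Z}(H)=\langle\beta^4\rangle$ with $H/\mathsf{Z}(H)$ isomorphic to the near-dihedral group of Theorem~\ref{thm-extra}. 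Combining Theorem~\ref{thm-extra} ($\mathsf{d}=q+2$) with the strict form of Theorem~\ref{lem-commutator} (strict since the near-dihedral is non-abelian) yields $\mathsf{D}(H/\mathsf{Z}(H))\leq(q+2)+2q-2=3q$, and then \cite[Theorem~3.3]{DavI} delivers
\[
\mathsf{D}(H)\leq \mathsf{D}(\mathsf{Z}(H))\,\mathsf{D}(H/\mathsf{Z}(H))\leq 2^{k-j}\cdot 3q=\tfrac{3}{4}|H|,
\]
and the lift to $G$ finishes. The main obstacle is precisely this last reduction: the crude Theorem~\ref{lem-commutator} alone yields only $\mathsf{D}(\text{near-dihedral})\leq 3q+1$, and it is only the \emph{strict} form of Theorem~\ref{lem-commutator}---fed by the exact value of $\mathsf{d}$ supplied by Theorem~\ref{thm-extra}---that squeezes the bound down to the sharp $3q$ required to reach $\tfrac{3}{4}|G|$ after lifting through the centre.
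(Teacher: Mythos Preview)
Your plan is correct and lands on the same endgame (the near-dihedral group of order $4q$ plus Theorems~\ref{lem-commutator} and~\ref{thm-extra}), but the route you take to get there differs noticeably from the paper's. The paper never passes through Miller--Moreno or Burnside's transfer theorem: instead, after disposing of the abelian and non-cyclic-Sylow cases (the latter via Corollary~\ref{cor-p-group}), it invokes the classical classification of groups with \emph{all} Sylow subgroups cyclic, obtaining the metacyclic presentation $G=\langle\alpha,\tau:\alpha^n=\tau^m=1,\ \alpha\tau=\tau\alpha^r\rangle$ with $n$ odd and $m$ even. It then case-splits on $m$: the range $m\geq 8$ falls to Theorem~\ref{lem-commutator} combined with the Olson--White bound $\mathsf d(G)\leq\frac12|G|$; $m=2$ is the cyclic-index-$2$ case; $m=6$ reduces to an odd-order subgroup; and for $m=4$ one passes to a subgroup of order $4q$ and applies Theorems~\ref{thm-extra} and~\ref{lem-commutator} directly, with no centre quotient needed.

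What your approach buys is that you never use the Olson--White bound or the explicit metacyclic classification; the price is an extra layer (the centre quotient via \cite[Theorem~3.3]{DavI}) in the final step, since your $\beta=\tau^{2^{j-2}}$ can have order strictly larger than $4$ when $j<k$. The paper's approach is shorter because passing to the subgroup $\langle\alpha^{n/q},\tau\rangle$ of order $4q$ lands immediately on the near-dihedral group itself, with trivial centre. A minor wording point: your parenthetical ``with $p\mid q-1$'' in the Miller--Moreno description is only valid when $r=1$; for $r\geq 2$ it need not hold, though your argument there (just use $C_q^2\leq V$) does not depend on it.
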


\begin{proof}
If $|G|$ is odd, then Theorem \ref{thm-2overp} gives $\mathsf D(G)\leq \frac2p|G|\leq \frac23|G|<\frac34|G|$, as desired. Therefore we may assume $|G|$ is even.
As in the proof of  Theorem \ref{thm-2overp}, it suffices to prove $\mathsf D(H)\leq \frac34|H|$ for any subgroup $H\leq G$. If $G$ is abelian, then, since $G$ is not cyclic, there must be a subgroup $H\cong C_q^2$ for some prime $q\geq 2$, whence $\mathsf D(H)=\mathsf D(C_q^2)=2q-1\leq \frac34|H|$ follows from \eqref{eta-p2}. Therefore, we may assume $G$ is non-abelian.
If $G$ contains a non-cyclic Sylow subgroup $H\leq G$, then applying Theorem \ref{cor-p-group} gives $\mathsf D(H)\leq \frac34 |G|$, as desired. Therefore we may assume all Sylow subgroups are cyclic. It is well-known (see \cite[Theorem 10.1.10]{Robinson}) that a finite group $G$ having all its Sylow subgroups cyclic must have a presentation of the form
\be\label{pres}G=\la \alpha,\,\tau:\; \alpha^n=1,\quad\tau^m=1,\quad \alpha\tau=\tau\alpha^r\ra,\ee
where $\gcd(r-1,n)=\gcd(m,n)=1$, \ $r^m\equiv 1\mod n$, and $n$ is odd. As $|G|=mn$ is even, we have $m$ even.

It is routine to calculate $$G'=\la \alpha\ra.$$ Consequently, $|G'|=\frac1m|G|$, so that if $m\geq 8$, then Theorem \ref{lem-commutator} and \cite[Theorem 3.1]{DavI} give the desired bound. Therefore, recalling that $m$ is even, we find that $m\in \{2,\,4,\,6\}$. If $m=2$, then $G$ has a cyclic, index $2$ subgroup, in which case \cite[Theorem 1.1, Section 5]{DavI} gives the desired bound. It remains to consider $m\in \{4,\,6\}$.

If $m=6$, then $H=\la \alpha,\,\tau^{2}\ra$ is a subgroup of odd order $3n$. If it is cyclic, then $H$ is a cyclic, index $2$ subgroup, which is a case that has already been handled. On the other hand, if it is non-cyclic, then applying Theorem \ref{thm-2overp} to $H$ yields the desired bound. Therefore it remains to consider the case $m=4$.

Let $q\mid n$ be a prime and observe that $H=\la \alpha^{n/q},\,\tau\ra\leq G$ is a non-abelian subgroup of order $mq=4q$ having a presentation of the form \eqref{pres} with $n=q$. Since $H$ is neither cyclic nor dihedral of order $2n'$ with $n'$ odd, we see that it suffices to show the theorem holds for $H$. Thus we may w.l.o.g. $H=G$ with $n=q$ prime in \eqref{pres}.

Since $G$ is non-abelian and $r^m=r^4\equiv 1\mod q$, we see that the multiplicative order of $r$ modulo $q$ is either $2$ or $4$. If it is $2$, then $r^2\equiv 1\mod q$, in which case $\la \tau^2\alpha\ra$ is a cyclic, index $2$ subgroup, which is a case that has already been handled. Thus it remains to consider the case when $r^2\not\equiv 1\mod q$ but $r^4\equiv 1\mod q$, which is easily seen to imply, as $q$ is prime and $r^4-1=(r^2-1)(r^2+1)$, that  $$r^2\equiv -1\mod q.$$ But now Theorems \ref{thm-extra} and \ref{lem-commutator} yield the desired bound $\mathsf D(G)\leq \mathsf d(G)+2|G'|-2= q+2+2q-2=\frac34|G|$, completing the proof.
\end{proof}



\providecommand{\bysame}{\leavevmode\hbox to3em{\hrulefill}\thinspace}
\providecommand{\MR}{\relax\ifhmode\unskip\space\fi MR }
\providecommand{\MRhref}[2]{%
  \href{http://www.ams.org/mathscinet-getitem?mr=#1}{#2}
}
\providecommand{\href}[2]{#2}

\end{document}